\numberwithin{figure}{section}
\numberwithin{equation}{section}
\title{Rational Dyck paths and decompositions}
\author[K.~Shigechi]{Keiichi~Shigechi}
\email{k1.shigechi AT gmail.com}
\date{\today}
\newcommand\tikzpic[2]{
\raisebox{#1\totalheight}{
\begin{tikzpicture}
#2
\end{tikzpicture}
}}
\newcommand\circnum[1]{
\hspace{-3mm}
\tikzpic{-0.22}{[scale=0.6]
\node[draw,circle,inner sep=1pt]at(0,0){$#1$};
}
\hspace*{-3mm}
}
\newcommand\cnum[1]{
\tikzpic{}{[scale=0.7]
\node[draw,circle,inner sep=2pt]at(0,0){$#1$};
}
}
\newtheorem{theorem}[figure]{Theorem}%%[section]
\newtheorem{example}[figure]{Example}
\newtheorem{lemma}[figure]{Lemma}
\newtheorem{defn}[figure]{Definition}
\newtheorem{prop}[figure]{Proposition}
\newtheorem{cor}[figure]{Corollary}
\newtheorem{remark}[figure]{Remark}
\begin{document}
\begin{abstract}
We study combinatorial properties of a rational Dyck path by decomposing 
it into a tuple of Dyck paths.
The combinatorial models such as $b$-Stirling permutations, $(b+1)$-ary 
trees, parenthesis presentations, and binary trees play central roles 
to establish a correspondence between the rational Dyck path and the 
tuple of Dyck paths.
We reinterpret two orders, the Young and the rotation orders, on 
rational Dyck paths in terms of the tuple of Dyck paths by use of 
the decomposition.
As an application, we show a duality between $(a,b)$-Dyck paths 
and $(b,a)$-Dyck paths in terms of binary trees. 
\end{abstract}

\maketitle

\section{Introduction}
In this article, we study combinatorics on a rational Dyck path by the decompositions 
of it into a tuple of Dyck paths.
A rational Dyck path, called an $(a,b)$-Dyck path, of size $n$ is a lattice path 
with a certain condition explained in Section \ref{sec:rDp}.
We consider several combinatorial models associated to a rational Dyck path such 
as $b$-Stirling permutations, $(b+1)$-ary trees, parenthesis presentations, the horizontal
and vertical strip decompositions, and binary trees.

We consider two operations on an $(a,b)$-Dyck path, called strip decompositions: one is 
the horizontal strip decomposition, and the other is the vertical strip decomposition.
They are generalizations of the strip decomposition of a $(1,b)$-Dyck path 
studied in \cite{KalMuh15}.
These two decompositions produce a tuple of Dyck paths.
The advantage of the decompositions is that we translate the properties of 
an $(a,b)$-Dyck path into the properties of a tuple of $(1,1)$-Dyck paths, 
and study them on $(1,1)$-Dyck paths.
For example, we have two distinguished orders, the Young order and the rotation order, 
on the poset of rational Dyck paths.
One of such translations is to interpret the rotation order on a tuple of Dyck paths.
In this case, the order between two $(a,b)$-Dyck paths can be reduced to a 
simple condition on $(1,1)$-Dyck paths.
We also study these Young and rotation orders on $(b+1)$-ary trees, or 
equivalently $b$-Stirling permutations \cite{GesSta78} 
(see also \cite{CebGonDLe19,Par94a,Par94b} and references therein).
Some remarks on the orders are in order.
The $b$-Dyck paths and the $b$-Tamari lattices are studied in \cite{KalMuh15}.
In \cite{PreRatVie17}, they give an extension of classical Tamari lattice \cite{FriTam67,Tam62} 
and $b$-Tamari lattice \cite{BerPreRat12}, and study its relation to binary trees.
There, the order is realized in terms of rational Dyck paths.
A generalizations of Tamari lattice is studied as $\nu$-Tamari 
lattice in \cite{CebPadSar18,CebPadSar19}.
In \cite{BGDMCY21}, they give a unifying framework for the Young and rotation orders.

We also study two parenthesis presentations of a rational Dyck path $P$.
We call them type $I$ and type $II$.
A parenthesis presentation of both types is obtained from a $b$-Stirling permutation, or
equivalently, a $(b+1)$-ary tree corresponding to $P$.
Since a rational Dyck path naturally defines a partition, we pass through 
the $312$-avoiding Stirling permutations to clarify the correspondence between 
a parenthesis presentation and a rational Dyck path.
A rational Dyck path has a presentation by the word consisting $N$  
and $E$, where $N$ (resp. $E$) corresponds to an up (resp. down) step.
For a Dyck path $P_1$ with $(a,b)=(1,1)$, we have the parenthesis presentation
obtained from $P_1$ by replacing a $N$ by the left parenthesis ``$($" and 
an $E$ by the right parenthesis ``$)$".
The two parenthesis presentations as mentioned above are natural generalizations 
of the parenthesis presentation for a $(1,1)$-Dyck path.
To capture the structure of a parenthesis presentation, we have 
$b$ $(1,1)$-Dyck paths.

The first parenthesis presentation, which is of type $I$, behaves nicely 
for the rotation of $P$, and the second one behaves nicely when we 
consider binary trees. 
Actually, we translate the rotation of $P$ in the language of a $b$-tuple of Dyck paths  
obtained by the first parenthesis presentation.
To obtain a natural correspondence between a rational Dyck path and 
a parenthesis presentation, $312$-avoiding $b$-Stirling permutations 
play a central role. This is because the parenthesis presentations 
capture the structure of a $(b+1)$-ary tree, however, do not 
capture the information of labels on edges in the $(b+1)$-ary tree.

Given a lowest $(a,b)$-Dyck path $P_0$, one can consider a binary tree
associated to $P_0$.
One can define a way of giving a word by visiting the edges of the 
binary tree.
The first way gives the path $P_0$ itself. 
By introducing another way of obtaining a word from the binary tree, 
we encode two $(a,b)$-Dyck paths $P_0$ and $Q$ as the reading words 
obtained from the tree.
In \cite{PreRatVie17}, they introduce a new algorithm called ``push-gliding" 
which is the algorithm to obtain a binary tree from two Dyck paths.
This algorithm transforms two Dyck paths into a binary tree which 
contains two reading words for $P_0$ and $Q$. 
In this paper, we give another algorithm to produce a binary tree 
for two Dyck paths.
The algorithm is starting from a binary tree for $P_{0}$, then 
cut it into several pieces and make a new binary tree for $P_0$ and 
$Q$ by reconnecting them into a single tree. 
We also show that the parenthesis presentation of type $II$ is compatible with 
the Dyck paths obtained from the binary tree for $P_0$ and $Q$ 
by dividing left edges into $b$ pieces and by reading the words.	
We study the relation between an $(a,b)$-Dyck path $P$ and the horizontal and vertical
strip decompositions.
For this, we consider a $(b,a)$-Dyck path $P^{\sharp}$ which is regarded as a dual path of $P$. 
By a strip decomposition, we obtain $a$ (in case of $a<b$) binary trees for both 
$P$ and $P^{\sharp}$.
Since $P$ and $P^{\sharp}$ are ``dual" to each other, 
the binary trees for $P$ obtained by the horizontal strip decomposition are dual to 
those for $P^{\sharp}$ obtained from the parenthesis presentation of type $II$.

This paper is organized as follows.
In Section \ref{sec:rDp}, we introduce rational Dyck paths and summarize 
their basic properties such as step and height sequences, a horizontal and 
vertical strip decompositions, and the Young and rotation orders.
In Section \ref{sec:multip}, we study $b$-Stirling permutations and 
$(b+1)$-ary trees. 
The Young and rotation orders are also translated in the language of 
$(b+1)$-ary trees.
In Section \ref{sec:312}, the parenthesis presentation is introduced 
to obtain multiple Dyck paths.
The rotation on a rational Dyck path is also translated into the 
rotation on multiple Dyck paths.
In Sections \ref{sec:sdpp} and \ref{sec:Hsdmp}, 
we continue to study parenthesis presentations and multiple Dyck 
paths.  
In Section \ref{sec:bt}, we connect the results in previous sections 
to binary trees.

\section{Rational Dyck paths}
\label{sec:rDp}
\subsection{Rational Dyck paths}
Let $(a,b)\in\mathbb{N}^{2}$ be relatively prime positive integers 
and $n\in\mathbb{N}$.
A lattice path from $(0,0)$ to $(bn,an)$ staying above the line $y=ax/b$
is called $(a,b)$-Dyck path, or called {\it rational Dyck path} \cite{Biz54}.
We call the integer $n$ the size of an $(a,b)$-Dyck path.
We denote by $\mathfrak{D}^{(a,b)}_{n}$ the set of $(a,b)$-Dyck path 
of size $n$.
A $(a,b)$-Dyck path is a generalization of the well-known Dyck paths ($(a,b)=(1,1)$) 
and $b$-Dyck paths ($a=1$).

The cardinality of $\mathfrak{D}_{n}^{(1,b)}$ is given by the Fuss-Catalan number
\begin{align*}
\mathrm{Cat}^{(b)}_{n}=\genfrac{}{}{0.8pt}{}{1}{bn+1}
\begin{pmatrix}(b+1)n \\ n\end{pmatrix},
\end{align*}
and specialization of $a=1$ yields the well-studied Catalan numbers.

Following \cite{KalMuh15}, we introduce weakly increasing sequences, the step 
and the height sequences, for a rational Dyck path. 
Let $P$ be a $(a,b)$-Dyck path of size $n$.
The {\it step sequence} $\mathfrak{u}_{P}:=(u_1,u_2,\ldots,u_{an})$ 
is a sequence of non-negative integers defined for $P$ as 
\begin{align*}
&u_{1}\le u_{2}\le\ldots\le u_{an},\\
&u_{k}\le \genfrac{}{}{0.8pt}{}{b}{a}(k-1),\quad \forall k\in[1,an].
\end{align*}
The entry $u_{k}$ in the height sequence indicates that 
the path $P$ passes through the edge connecting 
$(u_{k},k-1)$ and $(u_{k},k)$.
Similarly, the {\it height sequence} $\mathfrak{h}_{P}=(h_1,h_2,\ldots,h_{bn})$
is a sequence of positive integers satisfying 
\begin{align*}
&h_{1}\le h_2\le \ldots \le h_{bn}, \\
&h_{k}\ge \lceil ka/b \rceil, \quad \forall k\in[1,bn].
\end{align*}
The entry $h_{k}$ in the height sequence indicates that 
the path $P$ passes through the edge connecting 
$(k-1,h_{k})$ and $(k,h_{k})$.
Note that one can yield a $(a,b)$-Dyck path of size $n$ 
by either the step sequence or the height sequence.

Since a rational path $P$ in $\mathfrak{D}_{n}^{(a,b)}$ is a lattice path from $(0,0)$
to $(bn,an)$, each step is either $(0,1)$ or $(1,0)$. 
When a step is $(0,1)$, we write $N$ (a north step). Similarly, when a step is 
$(1,0)$, we write $E$ (an east step).
We have an expression of $P$ by a sequence of $N$'s and $E$'s, and denote 
it by $P$ by abuse of notation.
The path $P$ has $an$ $N$'s and $bn$ $E$'s.

Figure \ref{fig:rDyck} gives an example of a rational Dyck path $P=NENENEENEE$.
\begin{figure}[ht]
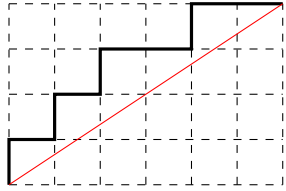

\tikzpic{-0.5}{[scale=0.6]
\draw[very thick](0,0)--(0,1)--(1,1)--(1,2)--(2,2)--(2,3)--(4,3)--(4,4)--(6,4);
\draw[dashed](0,0)--(0,4)(1,0)--(1,4)(2,0)--(2,4)(3,0)--(3,4)(4,0)--(4,4)(5,0)--(5,4)(6,0)--(6,4);
\draw[dashed](0,0)--(6,0)(0,1)--(6,1)(0,2)--(6,2)(0,3)--(6,3)(0,4)--(6,4);
\draw[red](0,0)--(6,4);
}
\caption{A rational Dyck path $P$ in $\mathfrak{D}_{2}^{(2,3)}$ with step sequence
$\mathfrak{u}_{P}=(0,1,2,4)$ and height sequence $\mathfrak{h}_{P}=(1,2,3,3,4,4)$.}
\label{fig:rDyck}
\end{figure}

Let $P_{0}\in\mathfrak{D}_{n}^{(a,b)}$ be the lowest Dyck path in $\mathfrak{D}_{n}^{(a,b)}$ 
associated with $\mathfrak{u}_{P_{0}}:=(u_1,\ldots,u_{an})$.
In other words, there exists no $P'$ associated with $\mathfrak{u}_{P'}:=(u'_1,\ldots,u'_{an})$ 
such that 
\begin{align}
&u_{k}=u'_{k}, \quad \forall k\in[1,i],\\
&u_{i+1}<u'_{i+1},
\end{align}
for some $i\in[1,an]$.

Similarly, let $P_{1}\in\mathfrak{D}_{n}^{(a,b)}$ be the highest path in $\mathfrak{D}_{n}^{(a,b)}$ 
associated with $\mathfrak{u}_{P_{1}}:=(u_1,\ldots,u_{an})$.
In other words, there exists no $P'$ associated with $\mathfrak{u}_{P'}:=(u'_1,\ldots, u'_{an})$ 
such that 
\begin{align}
&u_{k}=u'_{k}, \quad \forall k\in[1,i],\\
&u_{i+1}>u'_{i+1},
\end{align}
for some $i\in[1,an]$.

For example, the lowest and highest paths $P_0$ and $P_1$ in $\mathfrak{D}_{2}^{(2,3)}$
are given by $P_{0}=NENE^{2}NENE^{2}$ and $P_1=N^{4}E^{6}$.

\subsection{Rotation order}
\label{sec:rot}
We generalize the notion of the rotation order studied in \cite{KalMuh15} to 
the rotation on $(a,b)$-Dyck paths.

Let $P\in\mathfrak{D}_{n}^{(a,b)}$ with step sequence 
$\mathfrak{u}_{P}=(u_1,u_2,\ldots,u_{an})$.
Following \cite{BerPreRat12}, for $i\in[1,an]$, we say that the {\it primitive subsequence of $\mathfrak{u}_{P}$
at position $i$}  is the unique subsequence 
$(u_{i},u_{i+1},\ldots,u_{k})$ 
such that it satisfies the following two conditions:
\begin{align}
\label{eqn:condrot}
\begin{aligned}
&u_{j}-u_{i}<\frac{b}{a}(j-i),\quad \forall j\in[i+1,k],  \\
&\text{either } k=n, \text{ or } u_{k+1}-u_{i}\ge\frac{b}{a}(k+1-i). 
\end{aligned}
\end{align}

Let $P,P'\in\mathfrak{D}_{n}^{(a,b)}$ be rational Dyck paths 
such that the step sequences of $P$ and $P'$
are $\mathfrak{u}_{P}$ and $\mathfrak{u}_{P'}$, respectively.

We define a cover relation $\lessdot_{\mathrm rot}$ on $P$ and $P'$ by 
\begin{align}
\label{eqn:defrot}
P\lessdot_{\mathrm rot}P' \Leftrightarrow 
\mathfrak{u}_{P'}=(u_1,\ldots,u_{i-1},u_{i}-1,\ldots,u_{k}-1,u_{k+1},\ldots,u_{an}),
\end{align}
for some $i\in[2,an]$ with $u_{i-1}<u_{i}$ such that $\{u_i,u_{i+1},\ldots,u_{k}\}$ 
is the primitive subsequence of $\mathfrak{u}_{P}$ at position $i$.
The {\it rotation order} on $\mathfrak{D}_{n}^{(a,b)}$ is defined from the cover 
relation $\lessdot_{\mathrm rot}$ by the transitive and reflexive closure of it.

\begin{example}
We consider the same rational Dyck path $P$ in Figure \ref{fig:rDyck}.
The step sequence for $P$ is $(0,1,2,4)$.
When $P\lessdot_{\mathrm{rot}}P'$, the step sequence for $P'$ 
is either $(0,0,1,4)$, $(0,1,1,4)$, or $(0,1,2,3)$.
\end{example}

In \cite{PreRatVie17}, they give an alternative definition of the rotation order by using 
the lowest path $P_0\in\mathfrak{D}_{n}^{(a,b)}$.
Let $P$ be a rational Dyck path above $P_0$ and $\mathfrak{u}_{0}:=(u_{0}(1),\ldots,u_0(an))$ 
be the step sequence of $P_{0}$.
Let $p=(p_{x},p_{y})$ be a lattice point on $P$. 
The horizontal distance $\mathrm{hor}_{P_{0}}(p)$ is defined by 
\begin{align*}
\mathrm{hor}_{P_{0}}(p):=u_{0}(p_{y}+1)-p_{x}.
\end{align*}
Suppose that $p$ is a lattice point such that it preceded by a step $E$ and 
followed by a step $N$.
Let $p'$ be the first lattice point after $p$ such that 
$\mathrm{hor}_{P_0}(p')=\mathrm{hor}_{P_0}(p)$.
Let $P[p,p']$ be the subpath from $p$ to $p'$ in $P$.
Let $P'$ be a path obtained from $P$ by switching the $E$ step at $p$ and $P[p,p']$.
We define the covering relation to be $P\lessdot_{P_0}P'$.

\begin{example}
The step sequence of the lowest path $P_0$ in $\mathfrak{D}_{2}^{(2,3)}$ 
is given by $\mathfrak{u}_{P_0}=(0,1,3,4)$.
We consider the Dyck path $P$ in Figure \ref{fig:rDyck}, whose 
step sequence is $(0,1,2,4)$.
We consider the lattice point $p_1=(1,1)$.
The horizontal distance $\mathrm{hor}_{P_0}(p_1)=\mathfrak{u}_{P_0}(2)-1=0$ 
is equal to the horizontal distance at $p_2=(4,3)$.
Then, the subpath is $P[p_1,p_2]=NENEE$ and we obtain a new path $P'$ whose 
step sequence is $(0,0,1,4)$.
\end{example}

We show that the rotation orders studied in \cite{KalMuh15} and \cite{PreRatVie17}
are equivalent in case of $a<b$.

\begin{remark}
It is easy to see that the two orders are different in case of $a>b$.
Let $P$ be the rational Dyck path in $\mathfrak{D}_{3}^{(2,1)}$ 
with the step sequence $(0,0,1,1,2,2)$.
Note that the path $P$ is the lowest path.
We consider the rotation at position $3$.
Then, we have $P\lessdot_{rot}P_1$, where the step sequence of $P_1$ 
is $(0,0,0,0,1,1)$.
On the other hand, we have $P\lessdot_{P}P_2$, where the step 
sequence of $P_2$ is $(0,0,0,1,2,2)$.
\end{remark}

\begin{prop}
\label{prop:rot2}
Let $P_0$ be the lowest path in $\mathfrak{D}_{n}^{(a,b)}$.
Two covering relations  $\lessdot_{rot}$ and $\lessdot_{P_{0}}$ give 
the same covering relation if $a<b$.
\end{prop}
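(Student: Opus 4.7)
The plan is to recast both cover relations as the same local operation---swapping the $E$-step ending at a valley $p=(u_i,i-1)$ (with $u_{i-1}<u_i$) with a specific subsequent subpath $P[p,p']$---and then to verify that the two definitions choose the same endpoint $p'$ under the hypothesis $a<b$.

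First I translate $\lessdot_{\mathrm{rot}}$ into a geometric swap. If $(u_i,\ldots,u_k)$ is the primitive subsequence of $\mathfrak{u}_P$ at position $i$, then decreasing each of $u_i,\ldots,u_k$ by one slides the corresponding $N$-steps in $P$ one unit to the left, which is precisely the effect on the step sequence of exchanging the $E$-step ending at $p$ with the subpath that begins at $p$ and terminates at $p':=(u_{k+1},k)$ (with the convention $p'=(bn,an)$ when $k=an$). Since $\lessdot_{P_0}$ is by definition a swap of the same shape, the proposition reduces to showing that this primitive endpoint $p'$ is the first lattice point after $p$ at which $\mathrm{hor}_{P_0}$ returns to the value $\mathrm{hor}_{P_0}(p)$.

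Set $d(j):=u_0(j)-u_j$, so that $\mathrm{hor}_{P_0}((u_j,j-1))=d(j)$ and in particular $\mathrm{hor}_{P_0}(p)=d(i)$. The required coincidence splits into three claims: (a) $d(j)>d(i)$ for every $j\in[i+1,k]$; (b) $d(k+1)=d(i)$, with the boundary convention $\mathrm{hor}_{P_0}((bn,an))=d(i)$ when $k=an$; and (c) no interior lattice point on the intermediate $E$-runs between the $N$-steps at heights $i-1,\ldots,k-1$ attains the value $d(i)$. Claim (c) follows from (a)--(b) once we use $a<b$: because $b/a>1$, we have $u_0(j+1)-u_0(j)\geq 1$ for every $j$, so each $N$-step of $P$ raises $\mathrm{hor}_{P_0}$ by at least one while each $E$-step lowers it by exactly one, and $\mathrm{hor}_{P_0}$ cannot skip over the level $d(i)$ on an ascending $N$-step. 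This rigid control on how $\mathrm{hor}_{P_0}$ evolves pins the first return down precisely to $p'$.

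The heart of the argument, and its main obstacle, is the matching in (a) and (b). Because $u_0(j)-u_0(i)=\lfloor(j-1)b/a\rfloor-\lfloor(i-1)b/a\rfloor$ differs from $(b/a)(j-i)$ by a fractional-part correction, the primitive inequalities $u_j-u_i<(b/a)(j-i)$ and $u_{k+1}-u_i\geq(b/a)(k+1-i)$ are not identical term by term to $u_j-u_i<u_0(j)-u_0(i)$ and $u_{k+1}-u_i=u_0(k+1)-u_0(i)$. Invoking coprimality $\gcd(a,b)=1$, the valley hypothesis $u_{i-1}<u_i$, and crucially $a<b$, one shows that the first index $j$ at which the primitive inequality fails coincides with the first index at which $d(j)\leq d(i)$, so that both definitions select the same $k$. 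The remark immediately preceding the proposition exhibits a failure of this alignment when $a>b$ and signals that the hypothesis $a<b$ must be used essentially here.
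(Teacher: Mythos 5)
Your reduction of both cover relations to the same local swap---the $E$-step ending at the valley $p=(u_i,i-1)$ exchanged with a subpath $P[p,p']$---and your observation that everything then hinges on matching the endpoint of the primitive subsequence with the first return of $\mathrm{hor}_{P_0}$ is exactly the strategy of the paper's own proof, and your claims (a)--(c) correctly isolate where the work lies; (c) is fine given (a)--(b). The genuine gap is that the matching in (a)--(b), which you yourself call the heart of the argument, is only asserted (``one shows that\ldots''), and in fact it fails. The two thresholds to be compared are $u_j-u_i<\tfrac{b}{a}(j-i)$ (primitive condition) versus $u_j-u_i<u_0(j)-u_0(i)$ (i.e.\ $d(j)>d(i)$). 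Writing $(j-1)b=q_ja+r_j$ with $0\le r_j<a$, one has $\bigl\lceil \tfrac{b}{a}(j-i)\bigr\rceil=u_0(j)-u_0(i)+1$ whenever $r_j>r_i$, and $=u_0(j)-u_0(i)$ otherwise; so the two integer thresholds genuinely differ by one whenever $r_j>r_i$, and neither coprimality nor the valley hypothesis removes this. Concretely, take $(a,b)=(2,3)$, $n=2$, $P$ with step sequence $(0,1,2,3)$, and $i=3$, so $p=(2,2)$ is a valley and $\mathfrak{u}_{P_0}=(0,1,3,4)$. Since $u_4-u_3=1<\tfrac32$, the primitive subsequence at $3$ is $(u_3,u_4)$ and $\lessdot_{\mathrm{rot}}$ yields $(0,1,1,2)$. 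But $\mathrm{hor}_{P_0}((2,2))=u_0(3)-2=1=u_0(4)-3=\mathrm{hor}_{P_0}((3,3))$, so $p'=(3,3)$, $P[p,p']=NE$, and $\lessdot_{P_0}$ yields $(0,1,1,3)$. The two covers disagree, so the index $k$ selected by the two definitions is not the same, and your deferred claim cannot be established as stated.

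In fairness, the paper's proof asserts exactly the same coincidence (``the condition for the primitive subsequence at $i$ detects the first lattice point with the same horizontal distance'') with no more justification than you give, so your proposal faithfully reproduces the published argument together with its weak point; but as a standalone proof it is incomplete, and the example above shows the missing step is not merely technical. The matching does go through when $a=1$, where all $r_j=0$ and the two thresholds coincide term by term---this is the case actually treated in the cited literature---so any repair must either restrict to $a=1$ or adjust one of the two definitions (e.g.\ replace $\tfrac{b}{a}(j-i)$ by $u_0(j)-u_0(i)$ in the primitive condition) so that the thresholds agree for general $a<b$.
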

%%%%%%%%%%%%
\begin{proof}
The horizontal distance $\mathrm{hor}_{P_0}(p)$ is a distance 
from the lattice point $p$ in $P$ to the right-most lattice point
in the same row above the line $y=ax/b$ since the lowest path 
$P_0$ is the right-most path above the line $y=ax/b$.
On the other hand, the condition (\ref{eqn:condrot}) for
the primitive subsequence of $\mathfrak{u}_{P}$ at $i$ 
detects the first lattice point at $j>i$ such that 
the horizontal distance at $j$ is equal to the horizontal 
distance at $i$.
The conditions that $u_{i-1}<u_{i}$ and $i\in[2,an]$ to define 
the rotation order on $\mathfrak{D}_{n}^{(a,b)}$ is equivalent 
to the condition that the lattice point $p$ is preceded by 
a step $E$ and followed by a step $N$.

We decrease $u_j$ by one in a cover relation (\ref{eqn:defrot}),
which is nothing but switching the $E$ path at $p$ and 
subpath $P[p,p']$.
From these observations, the two cover relations 
$\lessdot_{\mathrm{rot}}$ and $\lessdot_{P_{0}}$
give the same cover relation.
\end{proof}

\subsection{Young order}
Let $P\in\mathfrak{D}_{n}^{(a,b)}$ associated with the step sequence 
$\mathfrak{u}_{P}:=(u_{1},\ldots,u_{an})$.
Since the entries $u_{i}$ in $\mathfrak{u}_{P}$ are in an increasing 
order, we have a natural bijection between $\mathfrak{u}_{P}$ and 
a Young diagram with $an$ rows.

Let $P,P'\in\mathfrak{D}_{n}^{(a,b)}$ such that the step sequences of $P$ and $P'$
are $\mathfrak{u}_{P}$ and $\mathfrak{u}_{P'}$, respectively.

We define a cover relation $\lessdot_{\mathrm Y}$ on $P$ and $P'$ by 
\begin{align}
P\lessdot_{\mathrm Y}P' \Leftrightarrow
\mathfrak{u}_{P'}=(u_{1},\ldots,u_{i-1},u_{i}-1,u_{i+1},\ldots,u_{an}),
\end{align}
for some $i\in[2,an]$ such that $u_{i-1}<u_{i}$.
The {\it Young order} on $\mathfrak{D}_{n}^{(a,b)}$ is defined 
from the cover relation $\lessdot_{\mathrm Y}$ by the transitive and 
reflexive closure of it.

\begin{example}
We consider the same rational Dyck path $P$ in Figure \ref{fig:rDyck}.
When $P\lessdot_{\mathrm{Y}}P'$, the step sequence for $P'$ 
is either $(0,0,2,4)$, $(0,1,1,4)$, or $(0,1,2,3)$.
\end{example}

\begin{remark}
The Young order can be rephrased as follows. 
Given a rational Dyck path $P$, the shape of a Young diagram above $P$
is nothing but the step sequence of $P$.
Thus, a path $P'$ such that $P\lessdot_{\mathrm Y}P'$ is obtained by 
deleting a single box from the Young diagram.

In general, the Young order is finer than the rotation order. 
When we have $P\lessdot_{\mathrm{rot}}P'$, we have a sequence of 
Dyck paths such that 
$P\lessdot_{Y}P_1\lessdot_{Y}P_2\lessdot_{Y}\ldots P_{k}\lessdot_{Y}P'$
for some $k$.
\end{remark}

\subsection{The horizontal strip-decomposition of rational Dyck paths}
We generalize the strip decomposition \cite{KalMuh15} for $(1,b)$-Dyck paths to the 
decomposition for $(a,b)$-Dyck paths.

Let $P\in\mathfrak{D}_{n}^{(a,b)}$ be a rational Dyck path 
and $\mathfrak{h}_{P}=(h_1,h_2,\ldots,h_{bn})$ be its height sequence. 
We define a sequence $\mathfrak{H}:=(H_{1},\ldots,H_{abn})$ of positive integers of length 
$abn$ constructed from $\mathfrak{h}_{P}$ by
\begin{align*}
H_{(i-1)a+j}:=h_{i}, \quad \forall i\in[1,bn], \quad \forall j\in[1,a].
\end{align*}
For $i\in[1,b]$, we consider the sequence 
\begin{align*}
\mathfrak{H}_{i}:=(H_{i},H_{i+b},\ldots,H_{i+(n-1)b}).
\end{align*}
Then, it is easy to show that each $\mathfrak{H}_{i}$ for $1\le i\le b$ 
is the height sequence of some Dyck path $q_{i}\in\mathfrak{D}_{an}^{(1,1)}$.
We associate $b$ Dyck paths in $\mathfrak{D}_{an}^{(1,1)}$ with the height 
sequence of $P$ by the following definition.
\begin{defn}
\label{defn:stripdec}
Let $P\in\mathfrak{D}_{n}^{(a,b)}$ be a rational Dyck path associated with the 
height sequence $\mathfrak{h}_{P}$.
We consider the sequence $\delta(P):=(q_1,\ldots,q_b)$ where the path 
$q_{i}\in\mathfrak{D}_{an}^{(1,1)}$ is characterized by the height sequence
$\mathfrak{H}_{i}$ given as above.
We call the sequence $\delta(P)$ the {\it horizontal strip-decomposition} of $P$.
We denote by $\delta$ the map from $\mathfrak{D}_{n}^{(a,b)}$ to 
$(\mathfrak{D}_{an}^{(1,1)})^{b}$.
\end{defn}

\begin{remark}
The map $\delta: \mathfrak{D}_{n}^{(a,b)}\rightarrow (\mathfrak{D}_{an}^{(1,1)})^{b}$ 
defined in Definition \ref{defn:stripdec} is injective
since different Dyck paths have different height sequences 
in $\mathfrak{D}_{n}^{(a,b)}$.
\end{remark}

\begin{prop}
Let $\delta(P):=(q_1,\ldots,q_b)$ be $b$ Dyck paths obtained from 
$P\in\mathfrak{D}_{n}^{(a,b)}$.
Then, we have $q_i<q_{i+1}$ in the Young order for all $1\in[1,b-1]$.
\end{prop}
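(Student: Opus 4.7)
The plan is to translate the Young order on $(1,1)$-Dyck paths into a componentwise comparison of height sequences, after which the claim reduces to the obvious monotonicity of the sequence $\mathfrak{H}$.

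First I would record the following reformulation: for any two paths $q, q' \in \mathfrak{D}_{an}^{(1,1)}$ with height sequences $\mathfrak{h}_{q}=(h_1,\ldots,h_{an})$ and $\mathfrak{h}_{q'}=(h'_1,\ldots,h'_{an})$, one has $q \leq_{\mathrm{Y}} q'$ if and only if $h'_k \geq h_k$ for every $k \in [1,an]$. This is a short unpacking of the cover relation $\lessdot_{\mathrm{Y}}$: decreasing some entry $u_i$ of the step sequence by one corresponds geometrically to lifting the path by a single unit box, which raises exactly one entry of the height sequence by one and fixes the others. Since the Young order is the transitive closure of $\lessdot_{\mathrm{Y}}$, and since the componentwise order on height sequences is generated by exactly these single-box lifts, the two orders agree.

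With this reformulation available, the proposition is almost a tautology. By construction, $\mathfrak{H}=(H_1,\ldots,H_{abn})$ is obtained from the weakly increasing height sequence $\mathfrak{h}_P$ of $P$ by repeating each entry $a$ times, so $\mathfrak{H}$ itself is weakly increasing. For each $k \in [1,an]$ the $k$-th entries of the height sequences of $q_i$ and $q_{i+1}$ are
\begin{align*}
\mathfrak{H}_i[k] = H_{i+(k-1)b}, \qquad \mathfrak{H}_{i+1}[k] = H_{(i+1)+(k-1)b},
\end{align*}
so the two indices differ by exactly one. Monotonicity of $\mathfrak{H}$ yields $\mathfrak{H}_{i+1}[k] \geq \mathfrak{H}_i[k]$ for every $k$, and the reformulation converts this into $q_i \leq_{\mathrm{Y}} q_{i+1}$.

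The only mildly delicate step in this plan is the translation between the Young order (defined via step sequences) and the componentwise order on height sequences; once that bookkeeping is in place the rest is forced by the weak monotonicity of $\mathfrak{h}_P$. I do not anticipate any genuine obstacle.
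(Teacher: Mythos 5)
Your proof is correct and follows essentially the same route as the paper: both compare the height sequences of $q_i$ and $q_{i+1}$ entrywise using the weak monotonicity of $\mathfrak{H}$, and then pass from entrywise domination of height sequences to the Young order (a step the paper dismisses as ``obvious'' and you spell out via single-box lifts). One small remark: as your argument makes clear, only the weak inequality $q_i\le_{\mathrm{Y}} q_{i+1}$ follows in general, since consecutive sequences $\mathfrak{H}_i$ and $\mathfrak{H}_{i+1}$ can coincide when $a\ge 2$; the strict ``$<$'' in the statement is an overstatement that neither proof actually establishes.
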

%%%%%%%%
\begin{proof}
Let $h_1:=(h_{1,1},\ldots,h_{1,an})$ and $h_2:=(h_{2,1},\ldots,h_{2,an})$ 
be the height sequences of $q_{i}$ and $q_{i+1}$ respectively.
By construction of $\mathfrak{H}_{1}$ and $\mathfrak{H}_{2}$,
we have $h_{1,j}\le h_{2,j}$ for all $j\in[1,an]$.
Then, it is obvious that $q_{i}<q_{i+1}$ in the Young order.
\end{proof}

It is clear from the definition that when $a=1$, the horizontal strip-decomposition 
is exactly the same as the strip-decomposition studied in \cite{KalMuh15}.

\begin{example}
We consider the same example as Figure \ref{fig:rDyck}.
Since $P=NENENEENEE$, $(a,b)=(2,3)$ and $\mathfrak{h}_{P}=(1,2,3,3,4,4)$,
we have three Dyck paths in $(\mathfrak{D}_{4}^{(1,1)})^{3}$ whose height 
sequences are 
\begin{align*}
(1,2,3,4),\quad (1,3,3,4),\quad (2,3,4,4).
\end{align*}
\end{example}

\subsection{The vertical strip-decomposition of rational Dyck paths}
Let $P\in\mathfrak{S}_{n}^{(a,b)}$ be a rational Dyck path with 
step sequence $\mathfrak{u}_{P}:=(u_{1},\ldots,u_{an})$.

We define a sequence $(U_1,\ldots,U_{abn})$ of non-negative integers 
of length $abn$ by 
\begin{align}
U_{(i-1)b+j}:=u_{i}, \quad \forall i\in[1,an],\quad \forall j\in[1,b].
\end{align}
For $i\in[1,an]$, we consider $a$ step sequences 
$\overline{\mathfrak{u}}_{i}(P):=(\overline{u}_{i,1},\ldots,\overline{u}_{i,n})$ 
with $1\le i\le a$ by 
\begin{align*}
\overline{u}_{i,j}:=U_{a(j-1)+i},
\end{align*}
for $1\le j\le bn$.

\begin{defn}
\label{defn:vsd}
Let $\mathfrak{u}_{P}$ and $\overline{\mathfrak{u}}_{i}(P):=(\overline{u}_{i,1},\ldots,\overline{u}_{i,bn})$ 
with $1\le i\le a$ as above.
We denote by $\theta:\mathfrak{D}_{n}^{(a,b)}\rightarrow(\mathfrak{D}_{bn}^{(1,1)})^{a}$ 
the map from $\mathfrak{u}_{P}\in\mathfrak{D}_{n}^{(a,b)}$ to 
$\{\overline{\mathfrak{u}}_{i}(P)\}_{i=1}^{a}\in(\mathfrak{D}_{bn}^{(1,1)})^{a}$.
We call the sequence $\theta(P):=\theta(\mathfrak{u}_{P})$ the vertical 
strip-decomposition of $P$.
\end{defn}

\begin{remark}
The map $\theta:\mathfrak{D}_{n}^{(a,b)}\rightarrow(\mathfrak{D}_{bn}^{(1,1)})^{a}$ defined 
in Definition \ref{defn:vsd} is injective as in the case of the horizontal decomposition.
\end{remark}

\begin{prop}
Let $\theta(P):=(q_1,\ldots,q_q)$ be $b$ Dyck paths obtained from 
$P\in\mathfrak{D}_{n}^{(a,b)}$ by the vertical strip-decomposition.
Then, we have $q_i\ge q_{i+1}$ in the Young order for all $1\in[1,b-1]$.
\end{prop}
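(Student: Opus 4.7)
The plan is to mirror the proof given for the horizontal strip-decomposition, but carried out on the step sequences of the paths $q_i$ rather than their height sequences, so that the inequality can be read off directly from the definition of $\theta$.

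First, I would record a monotonicity property of the auxiliary sequence $(U_1,\ldots,U_{abn})$: since $\mathfrak{u}_P=(u_1,\ldots,u_{an})$ is weakly increasing and, by the definition $U_{(k-1)b+\ell}:=u_k$, each value $u_k$ is written into $b$ consecutive slots of $U$, the sequence $U$ is itself weakly increasing. In particular, $U_m\le U_{m+1}$ for every $m\in[1,abn-1]$.

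Second, I would read off the step sequences of $q_i$ and $q_{i+1}$ in terms of $U$. By the definition $\overline{u}_{i,j}=U_{a(j-1)+i}$, the entries $\overline{u}_{i,j}$ and $\overline{u}_{i+1,j}$ arise from indices in $U$ that differ by exactly one. Combined with the monotonicity of $U$, this yields
\begin{align*}
\overline{u}_{i,j}=U_{a(j-1)+i}\le U_{a(j-1)+i+1}=\overline{u}_{i+1,j},\qquad \forall j\in[1,bn].
\end{align*}

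Finally, I would translate this componentwise inequality on step sequences into the Young order. A cover relation $P\lessdot_{\mathrm{Y}}P'$ decreases a single entry of the step sequence by one, so a path with a componentwise smaller step sequence is \emph{larger} in the Young order. The above inequalities therefore give $q_i\ge q_{i+1}$ in the Young order, as claimed. The only delicate point is keeping the direction of the Young order straight (the order on step sequences is reversed relative to the usual order on Young diagram shapes) and confirming that the unit shift in the index of $U$ is precisely what the definition of $\overline{u}_{i,j}$ produces; beyond this bookkeeping I do not foresee any genuine obstacle.
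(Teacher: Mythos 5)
Your proposal is correct and follows essentially the same route as the paper's proof: both establish the componentwise inequality $\overline{u}_{i,j}\le\overline{u}_{i+1,j}$ between the step sequences of $q_i$ and $q_{i+1}$ and then invert the direction to conclude $q_i\ge q_{i+1}$ in the Young order. Your version merely spells out the monotonicity of the auxiliary sequence $U$ and the unit index shift more explicitly than the paper does, which is a harmless (indeed welcome) amount of extra detail.
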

%%%%
\begin{proof}
Let $u_1:=(u_{1,1},\ldots,u_{1,bn})$ and $u_2:=(u_{2,1},\ldots,u_{2,bn})$ 
be the height sequences of $q_{i}$ and $q_{i+1}$ respectively.
By construction of $u_{1}$ and $u_{2}$,
we have $u_{1,j}\le u_{2,j}$ for all $j\in[1,bn]$.
Then, it is obvious that $q_{i}\ge q_{i+1}$ in the Young order.
\end{proof}

\begin{example}
We consider the same rational Dyck path $P$ as Figure \ref{fig:rDyck}.
Since $(a,b)=(2,3)$ and step sequence $\mathfrak{u}_{P}=(0,1,2,4)$, 
we have two rational Dyck paths in $(\mathfrak{D}_{6}^{(1,1)})^{2}$ whose 
step sequences are 
\begin{align*}
(0,0,1,2,2,4), \quad (0,1,1,2,4,4).
\end{align*}
\end{example}

\section{Multi-permutations}
\label{sec:multip}
In this section, we study the $b$-Stirling permutation and its orders.

\subsection{\texorpdfstring{$b$-Stirling permutations}{b-Stirling permutations}}
A {\it $b$-Stirling permutation of size $n$} is a permutation of the multiset
$\{1^b,2^b,\ldots,n^b\}$ such that 
if an integer $j$ appears between two $i$'s, then we have $j>i$ \cite{GesSta78,Par94a,Par94b}.
We denote by $\mathfrak{S}_{n}^{(b)}$ the set of $b$-Stirling 
permutations of size $n$.
As in the case of permutations, we adapt the one-line notation 
such that $\pi:=(\pi_1,\ldots,\pi_{bn})\in\mathfrak{S}_{n}^{(b)}$.

\begin{remark}
When $b=2$, the number of $2$-Stirling permutation of size $n$ 
is given by the double factorial $(2n-1)!!$.
\end{remark}

We define a map $\zeta:\mathfrak{D}_{n}^{(a,b)}\rightarrow\mathfrak{S}_{an}^{(b)}$.
\begin{defn}
\label{defn:zeta}
Given a path $P$ associated with the step sequence $\mathfrak{u}_{P}$, 
we define $\pi\in\mathfrak{S}_{n}^{(b)}$ from $\mathfrak{u}_{P}$ recursively as follows.
\begin{enumerate}
\item Set $i=1$ and $\pi:=\emptyset$.
\item We insert an integer sequence $i^{b}$ into the $u_{i}$-th position (from left)
of $\pi$.  
\item We increase $i$ by one, and go to (2).
The algorithm stops when we obtain $\pi\in\mathfrak{S}_{n}^{(b)}$ for $i=n$.
\end{enumerate}
We denote by $\zeta$ the map from $\mathfrak{D}_{n}^{(a,b)}$ to $\mathfrak{S}_{n}^{(b)}$, 
and denote $\pi$ by $\zeta(\mathfrak{u}_{P})$.
\end{defn}

\subsection{Rooted trees for Dyck paths}
\label{sec:rtree}
Let $q\in\mathfrak{D}_{n}^{(1,1)}$ be a Dyck path of size $n$.
A Dyck path consists of $n$ up steps and $n$ right steps.
We construct a rooted tree from $q$ following \cite{LS81}.
Let $N$ (resp. $E$) be an up (resp. down) step in $q$, 
and $Z$ be a set of Dyck paths of arbitrary size.
\begin{enumerate}
\item A single edge consisting of the root and one leaf corresponds to a Dyck path $NE$. 
\item Let $q'$ and $q"$ be Dyck paths in $Z$. 
When $q$ is a concatenation of $q'$ and $q"$, the tree for $q$ is obtained by attaching 
the trees for $q'$ and $q''$ at their roots.
\item Let $q'$ be a Dyck path and $q=Nq'E$. The tree for $q$ is obtained by attaching 
an edge just above the tree for $q'$. 
\end{enumerate}
We denote the tree for a Dyck path $q$ by $T(q)$.

Let $T(q)$ be a tree for a Dyck path $q$ with $n$ edge.
\begin{defn}
We put a positive integer in $[1,n]$ on edges in $T$ such that 
\begin{enumerate}
\item integers are increasing from the root to a leaf,
\item each integer in $[1,n]$ appears exactly once.
\end{enumerate}
We call this labeling a natural label $L$.
\end{defn}

From a natural label $L$, we construct two words $w_{\mathrm{pre}}(L)$ 
and $w_{\mathrm{post}}(L)$ according to the pre-order and the post-order 
respectively.
\begin{defn}
The pre-order word $w_{\mathrm{pre}}(L)$ is a word obtained from $L$ 
by reading the labels of edges in the following three steps:
(1) visit the root edge, 
(2) traverse the left subtree, and (3) traverse the right subtree.

Similarly, the post-order word $w_{\mathrm{post}}(L)$ is a word obtained from $L$ 
by reading the labels of edges in the following three steps:
(1) traverse the left subtree, (2) traverse the right subtree, and 
(3) visit the root edge.
\end{defn}

\subsection{\texorpdfstring{$(b+1)$-ary tree}{(b+1)-ary trees}}
\label{sec:mptree}
In this section, we summarize $(b+1)$-ary trees and $b$-Stirling 
permutations. We give a correspondence between $(b+1)$-ary trees 
and $b$-Stirling permutations (see for example \cite{CebGonDLe19} 
and references therein).

A tree is a graph consisting of nodes and edges without loops and cycles.
A tree is said to be rooted if one of its nodes is specially called the {\it root}.
A node $x$ is said to be the {\it parent} (reps. {\it child}) of the node $y$ if $x$ is the node 
just below (resp. above) the node $y$.
A node is called a {\it leaf} if it has no children and called {\it internal} if it has children.

A $(b+1)$-ary tree is a rooted tree such that each internal node has exactly $b+1$ children.
We denote by $\mathfrak{T}_{n}^{(b)}$ the set of $(b+1)$-ary trees with $n$ nodes.
We say that an element in $\mathfrak{T}_{n}^{(b)}$ is size $n$.
Below, by defining the map $\xi$ from $\mathfrak{S}_{n}^{(b)}$ to $\mathfrak{T}_{n}^{(b)}$, 
we show that $|\mathfrak{T}_{n}^{(b)}|=|\mathfrak{S}_{n}^{(b)}|$, {\it i.e.,} $\xi$ 
is bijective.

Let $\pi\in\mathfrak{S}_{1}^{(b)}$. Then we have a unique $b$-Stirling permutation $\pi=1^{b}$.
The tree $\xi(\pi)$ is defined as a $(b+1)$-ary tree with a single internal node.
We label the regions between two edges in $\xi(\pi)$ by $1$. Note that since we have $b+1$ edges,
the number of label $1$ is $b$.

We recursively construct a $(b+1)$-ary tree for $\pi\in\mathfrak{S}_{n}^{(b)}$ starting 
from a permutation $1^{b}$.
Since there is no $1\le i<n$ between two $n$'s, $b$ $n$'s appear as a single 
block in $\pi$.
We remove all $n$'s from $\pi$ and denote by $\pi'$ the new $b$-Stirling permutation of 
length $n-1$.
We denote by $\xi(\pi')$ the $(b+1)$-ary tree of size $n-1$ for $\pi'$.
Let $x$ be the position of the left-most $n$ in $\pi$ from left.
Since we have $(b+1)(n-1)$ leaves in the $(b+1)$-ary tree $\xi(\pi')$, 
we append a $(b+1)$-ary tree $T_{1}$ of size one at the $x+1$-th leaf from left. 
We label the regions between two edges in $T_{1}$ by $n$.
We denote by $\xi(\pi)$ the new tree of size $n$.

It is obvious that the above procedure is invertible. 
Therefore, we have 
\begin{prop}
The map $\xi:\mathfrak{S}_{n}^{(b)}\rightarrow\mathfrak{T}_{n}^{(b)}$ is 
bijective.
\end{prop}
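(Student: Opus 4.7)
The plan is to prove the proposition by induction on $n$, writing down an explicit inverse $\xi^{-1}: \mathfrak{T}_n^{(b)} \to \mathfrak{S}_n^{(b)}$. The base case $n=1$ is immediate, since $\mathfrak{S}_1^{(b)} = \{1^b\}$ and $\mathfrak{T}_1^{(b)}$ consists of a single tree. For the inductive step, I would first record two structural observations. In any $b$-Stirling permutation $\pi \in \mathfrak{S}_n^{(b)}$, the $b$ copies of $n$ necessarily form a single contiguous block $n^b$, since any symbol strictly between two $n$'s would have to exceed $n$. Dually, in any tree $T \in \mathfrak{T}_n^{(b)}$ (whose internal nodes are labeled so that labels strictly increase from the root to the leaves by the construction of $\xi$), the internal node bearing the largest label $n$ has only leaves as its $b+1$ children, because an internal child would have to carry a strictly larger label.

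Next I would define $\xi^{-1}$ recursively. Given $T \in \mathfrak{T}_n^{(b)}$, locate the unique internal node $v$ labeled $n$, contract $v$ together with its $b+1$ leaf-children to a single leaf $\ell$, and call the resulting labeled tree $T' \in \mathfrak{T}_{n-1}^{(b)}$. By the inductive hypothesis, $T' = \xi(\pi')$ for a unique $\pi' \in \mathfrak{S}_{n-1}^{(b)}$. Let $x+1$ be the left-to-right index of $\ell$ among the leaves of $T'$, and define $\xi^{-1}(T)$ to be the permutation obtained from $\pi'$ by inserting the block $n^b$ so that its leftmost entry lies at position $x+1$. One checks that the result indeed belongs to $\mathfrak{S}_n^{(b)}$: inserting a block of the maximal symbol into a $b$-Stirling permutation cannot create a forbidden configuration.

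Finally I would verify $\xi^{-1}\circ\xi = \mathrm{id}$ and $\xi\circ\xi^{-1}=\mathrm{id}$ by comparing the two recursive procedures directly. Removing the block $n^b$ from $\pi$ corresponds to contracting the internal node of largest label in $\xi(\pi)$, and in both directions the left-to-right index of $\ell$ in $T'$ matches the position of the leftmost $n$ in $\pi$. The main obstacle I expect is the positional bookkeeping between leaves of $T'$ and entries of $\pi'$: one must verify that appending a size-one subtree at the $(x+1)$-th leaf of $T'$, which replaces one leaf by $b+1$ new leaves, is exactly compatible with inserting a length-$b$ block $n^b$ whose leftmost entry occupies position $x+1$, and that the label-increasing property from root to leaves is preserved at every recursive step. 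Once these compatibilities are checked, the induction closes and bijectivity of $\xi$ follows.
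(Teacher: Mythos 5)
Your proposal is correct and is essentially the paper's own argument: the paper simply asserts that the recursive construction of $\xi$ (strip the block $n^{b}$, recurse, reattach a size-one tree at the recorded leaf position) ``is obviously invertible,'' and you have filled in exactly that inversion by induction, including the two structural facts (the $n$'s form a contiguous block; the node carrying the largest label has only leaf children) that make the contraction step well defined. The only caveat is the minor off-by-one bookkeeping between the position of the leftmost $n$ and the leaf index, which you correctly flag as the point to check rather than glossing over.
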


\subsection{Rotation order}
\label{sec:mprot}
We define an operation $\mathfrak{r}_{i}:\mathfrak{T}_{n}^{(b)}\rightarrow\mathfrak{T}_{n}^{(b)}$
for $i\in[2,n]$ as follows.
Let $T$ be a $(b+1)$-ary tree of size $n$.
Recall that each region between edges in $T$ is labeled by $j\in[1,n]$.
Find the subtree in $T$ such that the regions between edges connected to the root 
are labeled by $i$.
By construction, such $i$ always exists.
We denote by $T_{i}$ the subtree obtained as above and $T\setminus T_{i}$ be 
a tree obtained by deleting $T_{i}$ from $T$.
If there exists a subtree $T_{j}$ with smallest labels $j$ 
such that the root of $T_j$ is connected 
to the right-most edge with label $i$, we denote by $T_{i}\setminus T_{j}$
the subtree obtained from $T_i$ by deleting $T_{j}$.

Suppose that the subtree $T_{i}$ is appended $x$-th leaf from left in 
$T\setminus T_{i}$.
Let $h$ be the label which characterize the edge connected to the $x-1$-th leaf in $T\setminus T_{i}$.
We append the tree $T_{i}\setminus T_{j}$ on the $x-1$-th leaf from left in $T\setminus T_{i}$ if $h<i$.
Then, we append the tree $T_j$ on the $x$-th leaf from left in $T\setminus T_{i}$.
We denote by $T_{\mathrm{new}}$ the new tree.

We define the map $\mathfrak{r}_{i}$ as $T\mapsto T_{\mathrm{new}}$.
If $h>i$, we define $\mathfrak{r}_{i}$ as an identity, that is, $\mathfrak{r}_{i}(T)=T$.
We call $\mathfrak{r}_{i}$ with $i\in[2,n]$ a {\it rotation} of $T$.

\begin{example}
The action of $\mathfrak{r}_{2}$ on a tree is given in Figure \ref{fig:treerot}.
A circled number $i=3$ or $4$ corresponds to a tree with three edges with label $i$.
\begin{figure}[ht]
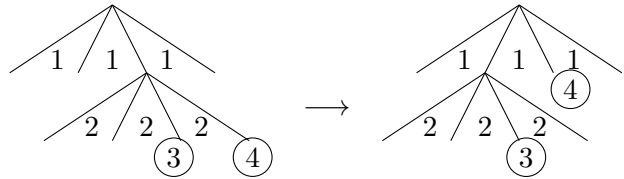

\tikzpic{-0.5}{[scale=0.6]
\coordinate
	child{coordinate(c1)}
	child{coordinate(c2)}
	child{coordinate(c3)
		child{coordinate(c5)}
		child{coordinate(c6)}
		child{coordinate(c7)}
		child{coordinate(c8)}
	}
	child{coordinate(c4)};
\draw($(c3)!0.8!(c7)$)node[anchor=north]{\cnum{3}};
\draw($(c3)!0.8!(c8)$)node[anchor=north west]{\hspace*{-5mm}\cnum{4}};
\draw($(0,0)!0.8!($(c1)!0.5!(c2)$)$)node{$1$};
\draw($(0,0)!0.8!($(c3)!0.5!(c2)$)$)node{$1$};
\draw($(0,0)!0.8!($(c3)!0.5!(c4)$)$)node{$1$};
%%%
\draw($(c3)!0.8!($(c5)!0.5!(c6)$)$)node{$2$};
\draw($(c3)!0.8!($(c6)!0.5!(c7)$)$)node{$2$};
\draw($(c3)!0.8!($(c7)!0.5!(c8)$)$)node{$2$};
}$\longrightarrow$
\tikzpic{-0.5}{[scale=0.6]
\coordinate
	child{coordinate(c1)}
	child{coordinate(c2)
		child{coordinate(c5)}
		child{coordinate(c6)}
		child{coordinate(c7)}
		child{coordinate(c8)}
	}
	child{coordinate(c3)}
	child{coordinate(c4)};
\draw($(c3)!0.8!(c7)$)node[anchor=north]{\cnum{3}};
\draw($(0,0)!0.8!(c3)$)node[anchor=north west]{\hspace*{-5mm}\cnum{4}};
\draw($(0,0)!0.8!($(c1)!0.5!(c2)$)$)node{$1$};
\draw($(0,0)!0.8!($(c3)!0.5!(c2)$)$)node{$1$};
\draw($(0,0)!0.8!($(c3)!0.5!(c4)$)$)node{$1$};
%%%
\draw($(c2)!0.8!($(c5)!0.5!(c6)$)$)node{$2$};
\draw($(c2)!0.8!($(c6)!0.5!(c7)$)$)node{$2$};
\draw($(c2)!0.8!($(c7)!0.5!(c8)$)$)node{$2$};
}
\caption{Action of $\mathfrak{r}_{2}$ on a tree.}
\label{fig:treerot}
\end{figure}
Since the subtree with label $4$ is connected to the right-most edge with label $2$,
this subtree is reconnected to an edge with label $1$ in the right picture.
\end{example}

Let $\pi,\pi'\in\mathfrak{S}_{n}^{(b)}$ be two $b$-Stirling permutations, and 
$T_{\pi}$ and $T_{\pi'}$ be two $(b+1)$-ary trees corresponding to 
$\pi$ and $\pi'$ respectively.

We define a cover relation $\lessdot_{\mathrm{rot}}$ on $\pi$ and $\pi'$
by
\begin{align}
\pi\lessdot_{\mathrm{rot}}\pi'\Leftrightarrow
T_{\pi'}=\mathfrak{r}_{i}(T_{\pi}),
\end{align}
for some $i\in[2,n]$ such that $T_{\pi'}\neq T_{\pi}$.
We also denote the relation on trees by $T_{\pi}\lessdot_{\mathrm{rot}}T_{\pi'}$. 
The rotation order on $\mathfrak{S}_{n}^{(b)}$  is defined from the 
cover relation $\lessdot_{\mathrm{rot}}$ by the transitive and 
reflexive closure of it.
Since we consider $b$-Stirling permutations and $(b+1)$-ary trees, 
the rotation $\lessdot_{\mathrm{rot}}$ on $(b+1)$-ary trees 
corresponds to the rotation of rational Dyck paths with $(a,b)=(1,b)$.
\begin{prop}
\label{prop:rottree1}
The rotation order on the tree coincides with the rotation order 
on the $b$-Stirling permutations. 
Let $P$ and $P'$ be rational Dyck paths in $\mathfrak{D}_{n}^{(1,b)}$ 
and $T_{\pi}$ and $T_{\pi'}$ be $(b+1)$-ary trees corresponding 
to $P$ and $P'$ respectively, {\it i.e.}, $T_{\pi}=\xi\circ\zeta(P)$ 
and $T_{\pi'}=\xi\circ\zeta(P')$.
Then, we have
\begin{align}
\label{eqn:rotbt2}
P\lessdot_{\mathrm{rot}}P'\Leftrightarrow
T_{\pi}\lessdot_{\mathrm{rot}}T_{\pi'}.
\end{align}
\end{prop}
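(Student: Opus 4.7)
The plan is to show that, under the bijection $\xi\circ\zeta:\mathfrak{D}^{(1,b)}_{n}\to\mathfrak{T}^{(b)}_{n}$, the cover relation on Dyck paths defined by (\ref{eqn:defrot}) and the cover relation on trees defined by $\mathfrak{r}_i$ are induced by the same local move on the step sequence $\mathfrak{u}_P$. I would therefore first unpack how each entry $u_j$ is reflected in $T_\pi$: the $\zeta$-insertion of $i^b$ at position $u_i$ corresponds, under $\xi$, to grafting a single fan (one internal node carrying $b$ label-$i$ gaps) at a specific leaf of the tree built from the first $i-1$ steps. The requirement $u_{i-1}<u_i$ is equivalent to saying that this fan is not rooted at the leaf immediately to the right of the rightmost edge carrying label $i-1$; this is exactly the non-trivial branch of the definition of $\mathfrak{r}_i$ (the case $h<i$).

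Next, I would prove that the primitive subsequence $(u_i,u_{i+1},\ldots,u_k)$ at position $i$ records precisely the labels appearing in the subtree $T_i\subseteq T_\pi$. The inequality $u_j-u_i<b(j-i)$ for $j\in[i+1,k]$ is equivalent to the assertion that the block $j^b$ inserted by $\zeta$ lands strictly between two consecutive copies of $i$, so that $j$'s fan is grafted inside $T_i$; the stopping condition $u_{k+1}-u_i\ge b(k+1-i)$ says that $k+1$ is the first label escaping $T_i$, and moreover identifies $k+1$ as the label of the smallest-label subtree attached to the rightmost edge emanating from the root of $T_i$ (which the definition of $\mathfrak{r}_i$ denotes $T_j$).

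Finally, the rotation $\mathfrak{r}_i$ slides $T_i\setminus T_{k+1}$ one leaf-slot to the left and replants $T_{k+1}$ at the vacated slot. On the step-sequence side, this is exactly the decrement $u_j\mapsto u_j-1$ for each $j\in[i,k]$, with all other entries unchanged---the operation in (\ref{eqn:defrot}). Putting these pieces together yields both directions of the equivalence (\ref{eqn:rotbt2}). The hard part will be the bookkeeping that matches the leaf at which we graft in the $\xi$-construction with the arithmetic quantity $u_i$, and, in particular, verifying by induction on $i$ that the subtree detached by $\mathfrak{r}_i$ always carries label exactly $k+1$, rather than a larger label buried deeper in $T_i$.
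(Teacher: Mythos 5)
Your proposal is correct and its combinatorial core matches the paper's, but it reaches the statement by a somewhat different route. The paper first invokes Proposition \ref{prop:rot2} to replace $\lessdot_{\mathrm{rot}}$ by the lattice-point formulation $\lessdot_{P_0}$, then matches the point $p_1$ (preceded by $E$, followed by $N$) with the choice of $T_i$, matches $p_2$ with the subtree $T_j$, and equates horizontal distances via the count of $(j-i)b$ edges in $T_i\setminus T_j$. You instead argue directly from the primitive-subsequence definition (\ref{eqn:defrot}) against the grafting positions in the $\xi\circ\zeta$ construction; your inequality $u_l-u_i<b(l-i)$ plays exactly the role of the paper's edge count, since each fan contributes $b$ letters to the block read off from $T_i$. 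Your route is a bit more self-contained (it avoids the horizontal-distance machinery, and Proposition \ref{prop:rot2} is only proved for $a<b$), and you correctly isolate the one delicate claim: the detached subtree carries label exactly $k+1$. That claim does hold, because a fan $l$ can only be grafted at the end of the rightmost edge of the fan labelled $i$ when $u_l-u_i=b(l-i)$, which is precisely the failure of the primitive-subsequence inequality, so the first label violating it is the first one landing on that edge. Two bookkeeping caveats: ``lands strictly between two consecutive copies of $i$'' should read ``lands strictly inside the block of letters read from $T_i$'' (when $u_l=u_i$ the fan $l$ sits on the leftmost edge of fan $i$, before the first copy of $i$); and for the tree-to-path direction one should note that when $u_{i-1}=u_i$ the move $\mathfrak{r}_i$ can be non-trivial but yields a non-monotone step sequence, hence never a cover between two Dyck trees --- a point the paper's proof also leaves implicit.
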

%%%%%%%%%%
\begin{proof}
From Proposition \ref{prop:rot2} and the condition $(a,b)=(1,b)$, 
it is enough to prove that 
$P\lessdot_{P_0}P'\Leftrightarrow T_{\pi}\lessdot_{\mathrm{rot}}T_{\pi'}$.
To pick an integer $i$, which characterizes a subtree $T_{i}$, corresponds to 
the choice of a lattice point $p_1$ which is preceded by 
an $E$ step and followed by a $N$ step in $P$.
To perform a rotation in $P$, we need to know the lattice point $p_2$ 
such that the horizontal distance $\mathrm{hor}_{P_0}(p_2)$ is 
equal to $\mathrm{hor}_{P_0}(p_1)$ and it is the first lattice point after
$p_1$ in $P$.
Recall that the subtree $T_{j}$ is connected to the right-most edge with a 
label $i$ in $T_{i}$.
Since we consider a rational Dyck path with $(a,b)=(1,b)$, the choice 
of $j$ corresponds to the choice of the lattice point $p_2$ in $P$.
Actually, we have $(j-i)b$ edges in $T_{i}\setminus T_{j}$ with labels 
in $[i,j-1]$, which implies the horizontal distance of $p_2$ is the same 
as that of $p_1$.
Further, the switch of the $E$ step and a subpath $P[p_1,p_2]$ in $P$ 
is equivalent to move the subtree $T_{i}\setminus T_{j}$ loft by one 
step and connect $T_{j}$ on $T\setminus T_{i}$ at $x$-th leaf of it.
From these observations, this completes the proof of Eqn. (\ref{eqn:rotbt2}).
\end{proof}

\subsection{Young order}
Let $\pi,\pi'\in\mathfrak{S}_{n}^{(b)}$ be two $b$-Stirling permutations.

Suppose that $\pi:=(\pi_{1},\ldots,\pi_{bn})$ satisfies 
$\pi_{i}:=r<\pi_{i+1}$. 
Let $s$ be a minimal integer such that 
$r<s$ and all $b$ $r$'s appear as $\pi_{j}$ with $j\ge i+1$.
The $2b$ integers $r$'s and $s$'s form a subsequence in $\pi$
$r^{k}s^{b}r^{b-k}$ with some $k\in[1,b]$.
Note that $k\ge 1$ since $\pi_{i}=r$.
We define the map $\chi_{r,s}$ acting on the sequence $r^{k}s^{b}r^{b-k}$ as 
\begin{align*}
\chi_{r,s}: r^{k}s^{b}r^{b-k}\mapsto r^{k-1}s^{b}r^{b-k+1}.
\end{align*}
We naturally extend the action of $\chi_{r,s}$ to a $b$-Stirling permutation $\pi$, 
that is, we change the partial sequence consisting of $r$'s and $s$'s by $\chi_{r,s}$ and 
do not change the remaining sequence.
Then, we can define a map 
$\chi_{r,s}:\mathfrak{S}_{n}^{(b)}\rightarrow\mathfrak{S}_{n}^{(b)}, \pi\mapsto\pi'$.

We define a cover relation $\lessdot_{Y}$ on $\pi$ and $\pi'$ by 
\begin{align}
\pi\lessdot_{Y}\pi'\Leftrightarrow
\pi'=\chi_{r,s}(\pi),
\end{align}
for some $i\in[1,bn-1]$ and $(\pi_{i},\pi_{i+1})=(r,s)$.
The {\it Young order} on $\mathfrak{S}_{n}^{(b)}$ is defined from the 
cover relation $\lessdot_{Y}$ by the transitive and reflexive closure of it.

\begin{prop}
The Young order on the $b$-Stirling permutations coincides with the Young order 
on the rational Dyck paths. 
Let $P$ and $P'$ be rational Dyck paths in $\mathfrak{D}_{n}^{(1,b)}$
and $\pi$ and $\pi'$ be $b$-Stirling permutations corresponding 
to $P$ and $P'$ respectively.
Then, we have
\begin{align}
\label{eqn:ytree}
P\lessdot_{\mathrm{Y}}P'\Leftrightarrow
\pi\lessdot_{\mathrm{Y}}\pi'.
\end{align}
\end{prop}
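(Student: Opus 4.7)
The plan is to trace the recursive insertion algorithm defining $\zeta$ (Definition~\ref{defn:zeta}) and show that a single Young cover of step sequences induces a single $\chi_{r,s}$-move on the corresponding $b$-Stirling permutation. Write $\pi:=\zeta(\mathfrak{u}_{P})$ and $\pi':=\zeta(\mathfrak{u}_{P'})$, and let $\pi^{(j)}$ and $\pi'^{(j)}$ denote the intermediate permutations obtained after the insertions of $1^{b},\ldots,j^{b}$.

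For the forward direction, assume that $\mathfrak{u}_{P'}$ differs from $\mathfrak{u}_{P}$ only by $u_{s}'=u_{s}-1$ for some $s\in[2,n]$ with $u_{s-1}<u_{s}$. The first $s-1$ insertions coincide, so $\pi^{(s-1)}=\pi'^{(s-1)}$; set $r:=\pi^{(s-1)}_{u_{s}}$, which is well-defined because $u_{s}\ge 1$. A direct comparison of the step-$s$ insertions shows that $\pi^{(s)}$ and $\pi'^{(s)}$ agree outside the window $[u_{s},u_{s}+b]$ and read $r,s,s,\ldots,s$ versus $s,s,\ldots,s,r$ inside it. Consequently the restricted $r$-$s$ subsequence of $\pi^{(s)}$ is $r^{k}s^{b}r^{b-k}$ and that of $\pi'^{(s)}$ is $r^{k-1}s^{b}r^{b-k+1}$ for some $k\ge 1$, so $\pi'^{(s)}=\chi_{r,s}(\pi^{(s)})$. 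I then prove by induction on $j\ge s$ that $\pi'^{(j)}=\chi_{r,s}(\pi^{(j)})$; the inductive step reduces to the commutation of $\chi_{r,s}$ with the insertion of $(j+1)^{b}$ at position $u_{j+1}$, which holds because $\chi_{r,s}$ acts as the transposition of two specific $r$- and $s$-positions whose images under the insertion shift consistently in both $\pi^{(j)}$ and $\chi_{r,s}(\pi^{(j)})$. Setting $j=n$ yields $\pi'=\chi_{r,s}(\pi)$, so $\pi\lessdot_{\mathrm{Y}}\pi'$.

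For the reverse direction, given $\pi\lessdot_{\mathrm{Y}}\pi'=\chi_{r,s}(\pi)$, I recover $u_{s}$ and $\pi^{(s-1)}_{u_{s}}$ from $\pi$ via reverse insertion: $u_{s}+1$ equals the leftmost position of $s$ after erasing from $\pi$ all values strictly larger than $s$, and $\pi^{(s-1)}_{u_{s}}$ equals the character immediately preceding this position. Using the shape $r^{k}s^{b}r^{b-k}$ with $k\ge 1$ of the restricted $r$-$s$ subsequence of $\pi$, one sees that this preceding character is the $r$ in $\chi_{r,s}$ and that $u_{s-1}<u_{s}$. Then $\mathfrak{u}_{P''}:=(u_{1},\ldots,u_{s-1},u_{s}-1,u_{s+1},\ldots,u_{n})$ is a valid step sequence of some $P''\in\mathfrak{D}_{n}^{(1,b)}$; applying the forward direction to $P\lessdot_{\mathrm{Y}}P''$ gives $\zeta(\mathfrak{u}_{P''})=\chi_{r,s}(\pi)=\pi'$, and injectivity of $\zeta$ forces $P''=P'$.

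The main obstacle is the commutation step in the inductive argument, which becomes delicate when $u_{j+1}$ falls strictly inside the current $s$-block and splits it into pieces separated by $(j+1)$-characters. The clean way to handle this is to view $\chi_{r,s}$ as the transposition of a specific pair of positions---the position of the last $r$ before the $s$-block and the position of the last $s$---which is determined only by the ordered list of $r$- and $s$-positions and is therefore preserved under insertion of any block of characters with values distinct from $r$ and $s$. A short case analysis on whether $u_{j+1}$ lies before, inside, or after this pair of positions then establishes the commutation.
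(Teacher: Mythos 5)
Your proof is correct and follows essentially the same route as the paper's: both rest on the observation that $u_{s}$ records how many smaller entries lie to the left of the leftmost $s$ in $\zeta(\mathfrak{u}_{P})$, so that $\chi_{r,s}$ (which slides the $s$-block past one $r$) is exactly the unit decrement of $u_{s}$. You carry this out in considerably more detail than the paper's terse argument---tracking the insertion algorithm of Definition~\ref{defn:zeta} and proving the commutation of $\chi_{r,s}$ with the later insertions---but the underlying idea is identical.
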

%%%%%%%%%%%
\begin{proof}
Given a Dyck path $P$, we construct $\pi$ by use of the step sequence 
$\mathfrak{u}_{P}:=(u_1,\ldots,u_{n})$ of $P$.
Each entry $u_{i}$ of the step sequence encodes the positions of the left-most
integer with label $i$.
The action of $\chi_{r,s}$ moves the positions of integers with label $s$
left by one step and does not change the positions of integers with other 
labels.
This is nothing but to decrease $u_{s}$ by one in the step sequence.
From these observations, we have Eqn. (\ref{eqn:ytree}).
\end{proof}

\section{\texorpdfstring{$312$-avoiding Stirling permutations}{312-avoiding Stirling permutations}}
\label{sec:312}
\subsection{\texorpdfstring{$312$-avoiding Stirling permutations}{312-avoiding Stirling permutations}}
Let $m$ be a positive integer and fix it through this subsection.
In this subsection, we consider the $m$-Stirling permutations $\mathfrak{S}_{n}^{(m)}$ 
for a rational Dyck path $\mathfrak{D}_{n}^{(1,m)}$.

Let $\pi\in\mathfrak{S}_{n}^{(m)}$ be a multi-permutation.
We say that $\pi$ avoids a pattern $312$ if we have no three integers $i<j<k$ 
such that $\pi_{j}<\pi_{k}<\pi_{i}$.
We denote by $\mathfrak{S}_{n}^{(m)}(312)$ the set of multi-permutations 
avoiding the pattern $312$ in $\mathfrak{S}_{n}^{(m)}$.

We introduce an order on $\mathfrak{S}_{n}^{(m)}$, which we call a {\it rotation order}.
This rotation order restricted to $\mathfrak{S}_{n}^{(m)}(312)$ 
is shown to be the same as the one in Section \ref{sec:mprot}. 
Let $\pi,\pi'\in\mathfrak{S}_{n}^{(m)}$ be multi-permutations 
given by $\pi=(\pi_1,\ldots,\pi_{n})$ and $\pi'=(\pi'_1,\ldots,\pi'_{n})$.
The order is defined as follows.
\begin{defn}
\label{defn:rot2}
If $\pi'$ is obtained from $\pi$ by applying a rotation to numbers 
in positions $i$ and $j$, that is, 
\begin{align}
\label{eqn:rotSperm}
\pi'=(\pi_1,\ldots, \pi_{i-1},\pi_{i+1},\ldots,\pi_{j},\pi_{i},\ldots,\pi_{n}),
\end{align}
where $i$ and $j$ satisfy the following conditions:
\begin{enumerate}
\item $\pi_{i}<\pi_{i+1}$, 
\item $\pi_{k}\ge \pi_{i+1}$ for all $k\in[i+1,j]$,  
\item $\pi_{k}\neq \pi_{i+1}$ for all $k\in[j+1,n]$,
\item $\pi_{j}=\pi_{i+1}$.
\end{enumerate}
When $\pi$ and $\pi'$ satisfy the above conditions, we denote the cover relation 
by $\pi\lessdot_{R}\pi'$. 

The rotation order $<_{R}$ is defined as the transitive and reflexive 
closure of the cover relation $\lessdot_{R}$.
\end{defn}

Let $P\in\mathfrak{S}_{n}^{(1,b)}$ be a rational Dyck path associated with 
step sequence $\mathfrak{u}_{P}:=(u_1,\ldots,u_{n})$.

We construct a multi-permutation $\pi(P)$ in $\mathfrak{S}_{n}^{(m)}$ from 
$\mathfrak{u}_{P}$ by $\zeta(\mathfrak{u}_{P})$ 
with a substitution $b=m$ in the process (1), (2) and (3) of Definition \ref{defn:zeta}.

\begin{lemma}
\label{lemma:Dyck312}
A multi-permutation $\pi(P)$ for a $m$-Dyck path $P$ is 
in $\mathfrak{S}_{n}^{(m)}(312)$.
\end{lemma}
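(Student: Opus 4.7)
The plan is to argue by induction on the size $n$, but with a strengthened hypothesis that carries an auxiliary monotonicity statement along with $312$-avoidance. Recall that $\pi(P)$ is built by inserting, at step $i$, the block $i^m$ at position $u_i$ of the current permutation. Let $P'$ denote the $(1,m)$-Dyck path of size $n-1$ whose step sequence is $(u_1,\ldots,u_{n-1})$; then $\pi(P)$ is obtained from $\pi(P')$ by inserting $n^m$ at position $u_n$. By induction $\pi(P')$ avoids $312$.

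I would carry the following auxiliary claim along the induction:

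\emph{For every $(1,m)$-Dyck path $Q$ of size $k$ with step sequence $(u_1,\ldots,u_k)$, the suffix of $\pi(Q)$ starting at position $u_k+1$ is weakly decreasing.}

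Granted this, I derive $312$-avoidance of $\pi(P)$ as follows. Suppose toward a contradiction that $\pi(P)$ contains a $312$-pattern at positions $p_1<p_2<p_3$ with values $v_2<v_3<v_1$. The pattern cannot lie entirely inside $\pi(P')$ (read after deleting the inserted block), since $\pi(P')$ is $312$-avoiding by induction; hence a newly inserted $n$ must participate. Because $n$ is the largest value appearing so far, it can only play the role of $v_1$. Then $p_1\in[u_n+1,u_n+m]$, while $p_2,p_3>u_n+m$ since $v_2,v_3<n$. Translating back, the positions $p_2-m<p_3-m$ both lie in the suffix of $\pi(P')$ starting at position $u_n+1$ and carry values $v_2<v_3$, an ascent. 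But the auxiliary claim applied to $P'$, combined with $u_{n-1}\le u_n$, says that this suffix is weakly decreasing, a contradiction.

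The auxiliary claim itself is proved by an induction of the same flavor. The base $k=1$ is immediate since $\pi(Q)=1^m$. For the inductive step, the suffix of $\pi(Q)$ starting at position $u_k+1$ is the block $k^m$ followed by the suffix of $\pi(Q')$ starting at position $u_k+1$ (here $Q'$ is the truncation of $Q$ of size $k-1$). The block $k^m$ is itself weakly decreasing, and $k$ dominates every later entry because every such entry has value $<k$, so it suffices to check that the tail is weakly decreasing. Since $u_k\ge u_{k-1}$, that tail is a final segment of the suffix of $\pi(Q')$ starting at position $u_{k-1}+1$, which is weakly decreasing by the inductive hypothesis.

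The main obstacle is to spot the right strengthening: a priori $312$-avoidance is a global condition, but in this inductive setting it is forced by a single local monotonicity statement about the suffix past the latest insertion point. Once this is identified, both inductions collapse to a one-line check that exploits the monotonicity $u_{k-1}\le u_k$ of the step sequence of a $(1,m)$-Dyck path.
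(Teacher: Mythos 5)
Your proof is correct and follows the same route as the paper: induction on $n$ via the block-insertion construction of $\zeta$, exploiting the monotonicity $u_{i-1}\le u_i$ of the step sequence. Your auxiliary claim that the suffix of $\pi(Q)$ beyond the last insertion point is weakly decreasing is precisely the invariant needed to rule out a $312$-pattern with the new block $n^m$ in the role of the largest entry, and it makes explicit what the paper's terse justification (that $i^m$ lands weakly to the right of the left-most $i-1$) leaves implicit.
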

%%%%%%%%%%
\begin{proof}
The permutation $\pi(P)$ is constructed from the step sequence 
$\mathfrak{u}_{P}:=(u_1,\ldots,u_{n})$ for the $m$-Dyck path $P$.

We prove the statement by induction on $n$. For $n=1$, we have a sequence 
$1^{m}$ and it avoids the pattern $312$. We assume that lemma is true 
for $n=i-1$.
Recall that the sequence $\mathfrak{u}_{P}$ is a weakly increasing sequence by 
construction.
By step (2) in Definition \ref{defn:zeta}, we insert $i^{m}$ into 
the $u_{i}$-the position from left.
Since $u_{i-1}\le u_{i}$, the subsequence $i^{m}$ is at most left to the 
left-most $i-1$, or right to the left-most $i-1$.
Therefore, we have no $312$ pattern after the insertion of $i^{m}$.
By induction, we have no $312$ pattern in $\pi(P)$, which completes the proof.
\end{proof}

\begin{prop}
The rotation order in Definition \ref{defn:rot2} 
for $\mathfrak{S}_{n}^{(m)}(312)$ is equivalent 
to the rotation order in Section \ref{sec:rot}.
\end{prop}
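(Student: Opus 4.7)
The plan is to show that the bijection $P \mapsto \pi(P)$ from $\mathfrak{D}_{n}^{(1,m)}$ onto $\mathfrak{S}_{n}^{(m)}(312)$, given by Definition \ref{defn:zeta} and Lemma \ref{lemma:Dyck312}, sends the cover relation $\lessdot_{\mathrm{rot}}$ of Section \ref{sec:rot} to (possibly a chain of) cover relations $\lessdot_{R}$ of Definition \ref{defn:rot2}, and vice versa, so that the two transitive-reflexive closures agree on $\mathfrak{S}_{n}^{(m)}(312)$.

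The key structural step is the following. Given $P$ with step sequence $(u_{1},\ldots,u_{n})$ and primitive subsequence $(u_{\iota},\ldots,u_{k})$ at position $\iota$, I claim that in $\pi(P)$ the $m(k-\iota+1)$ positions occupied by the blocks $\iota^{m},(\iota+1)^{m},\ldots,k^{m}$ form a single contiguous interval $[A,B]$, with $B$ equal to the position of the rightmost $\iota$. This is proved by tracking the insertion algorithm of Definition \ref{defn:zeta}: the primitive-subsequence inequalities $u_{j}-u_{\iota}<m(j-\iota)$ ensure that when $j^{m}$ is inserted, the insertion gap lies inside the ``extended block'' already built from $\iota^{m},\ldots,(j-1)^{m}$, while the boundary condition (either $k=n$ or $u_{k+1}-u_{\iota}\ge m(k+1-\iota)$) prevents any later block from entering that interval. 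Because $u_{\iota-1}<u_{\iota}$, one has $A\ge 1$, and an induction on the insertion order shows that $r:=\pi(P)_{A-1}$ is strictly less than $\iota$. Comparing the step sequences directly shows that $\pi(P')$ is obtained from $\pi(P)$ by shifting the extended block $[A,B]$ one position to the left and placing $r$ at position $B$. This transformation is realized as a chain of $\lessdot_{R}$-covers that push $r$ rightward through the extended block, each cover moving $r$ past one maximal constant-value run; the conditions of Definition \ref{defn:rot2} are satisfied at each step because $r$ is smaller than every element of the extended block, even though some intermediate permutations in the chain may violate the $312$-avoidance condition.

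For the converse direction, one shows that any $\lessdot_{R}$-chain connecting two $312$-avoiding permutations whose interior points are all \emph{not} $312$-avoiding must push a single element past a consecutive set of blocks whose labels form exactly the primitive subsequence of some position in the underlying step sequence. The main obstacle is the bookkeeping showing that $312$-avoidance is restored precisely when the moving element exits the extended block, which in turn forces the labels of the traversed blocks to satisfy the primitive-subsequence inequalities. A more transparent alternative route is to invoke Proposition \ref{prop:rottree1} and compare $\lessdot_{R}$ with the tree rotation $\mathfrak{r}_{s}$ of Section \ref{sec:mprot} through the bijection $\xi$ of Section \ref{sec:mptree}; both operations are most naturally read off from the same $(m+1)$-ary tree structure, and this reduces the proposition to the already-established tree-side equivalence.
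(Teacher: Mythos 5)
Your first direction is carried out along a genuinely different route from the paper. The paper reduces the statement to the tree picture via Proposition \ref{prop:rottree1} and then asserts in a few lines that a $\lessdot_{R}$ cover and the tree rotation $\mathfrak{r}_{i}$ give the same cover relation; you instead work directly with the insertion algorithm $\zeta$ and the step sequences. Your structural claims are correct: for the primitive subsequence $(u_{\iota},\ldots,u_{k})$ the blocks $\iota^{m},\ldots,k^{m}$ occupy the contiguous interval $[u_{\iota}+1,\,u_{\iota}+m(k-\iota+1)]$ of $\pi(P)$, its last entry is the last $\iota$, and a $\lessdot_{\mathrm{rot}}$ cover shifts this interval left past the element $r<\iota$ at position $u_{\iota}$. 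Your analysis also surfaces a point the paper's proof glosses over: this single $\lessdot_{\mathrm{rot}}$ cover is in general a \emph{chain} of $\lessdot_{R}$ covers through non-$312$-avoiding intermediates (for $m=2$ and $\mathfrak{u}=(0,1,1)$ one has $133221\lessdot_{R}331221\lessdot_{R}332211$, while $133221\rightarrow 332211$ is not a single $\lessdot_{R}$ cover since condition (4) fails), so the two cover relations do not literally coincide and only the generated orders can. Two small imprecisions: each $\lessdot_{R}$ step sends $r$ to just after the \emph{last} occurrence of the value immediately to its right, not past ``one maximal constant-value run''; and condition (2) at each step needs the Stirling property of the intermediate word (entries between two occurrences of $s$ exceed $s$), not merely $r<\iota$.

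The genuine gap is the converse inclusion, namely that $\pi(P)\le_{R}\pi(P')$ for $312$-avoiding endpoints forces $P\le_{\mathrm{rot}}P'$. You state the key claim --- that a $\lessdot_{R}$-chain between $312$-avoiders with non-avoiding interior must push a single element through one extended block whose labels form a primitive subsequence --- but you explicitly defer the bookkeeping, and that is exactly where the content lies: from a non-$312$-avoiding intermediate there may a priori be $\lessdot_{R}$ covers other than the one continuing your canonical chain, and a single $\lessdot_{R}$ cover joining two $312$-avoiders must separately be matched to a $\lessdot_{\mathrm{rot}}$ cover (this works only because in that case the value $s=\pi_{i+1}$ is itself the start of a primitive subsequence, which needs an argument). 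Without this, the proposal establishes only one inclusion of orders. The fallback you name --- routing through Proposition \ref{prop:rottree1} and the tree rotation $\mathfrak{r}_{i}$, which is the paper's actual proof --- is mentioned but not executed, so it does not close the gap either.
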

%%%%%%%%%%%
\begin{proof}
From Proposition \ref{prop:rottree1}, it is enough to
show that the rotation order in Definition \ref{defn:rot2}
is equivalent to the rotation on $(m+1)$-ary trees.
The conditions (1) and (4) correspond to fixing a subtree 
$T_{i}$ for some $i$, and the rotation given 
by Eqn. (\ref{eqn:rotSperm}) corresponds to moving $T_i$ 
left by one edge in the tree.
Recall that the subtree $T_j$ is reconnected to the subtree 
$T\setminus T_{i}$.
This operation is naturally realized by the 
rotation (\ref{eqn:rotSperm}), since $\pi_k$ with $k\in[j+1,n]$ 
remains as the same position.
Therefore, these two rotations give the same cover relation.
\end{proof}

\subsection{Presentation by multiple Dyck words }
\label{sec:pmDw}
Given a rational Dyck path $P\in\mathfrak{D}_{n}^{(1,m)}$ with $m\ge1$, we associate it with 
$m$ Dyck words.
We give a correspondence between $m$-Dyck paths and parenthesis presentation 
following \cite{CebGonDLe19}.
By making use of this parenthesis presentation, we give an $m$-tuple of Dyck words, which 
we call the parenthesis presentation of type $I$. 
Type $II$ will be introduced in Section \ref{sec:mockDw}.

Let $\pi:=\pi(P)\in\mathfrak{S}_{n}^{(m)}$ be a multi-permutation and 
$T(\pi)$ be its corresponding $(m+1)$-ary tree constructed from 
the step sequence $\mathfrak{u}_{P}$ by the map $\zeta$ (see Definition \ref{defn:zeta}).
We will construct a parenthesis presentation with $\ast$ as follows.
We walk around $T(P)$ from the root in anti-clockwise.
When we walk along the left-most (resp. right-most) edge labeled by $i$ for $1\le i\le n$,
we write ``$($" (resp. ``$)$").
Similarly, when we walk the region between edges labeled by $i$, 
we write $\ast$.
Thus, we have a word consisting of ``$($", ``$)$" and $\ast$, and denote it 
by $\alpha^{\ast}(\pi)$.
For $\pi(P)\in\mathfrak{S}_{n}^{(m)}$, we construct an $m$-tuple of parenthesis presentations 
of $P$ from $\alpha^{\ast}(\pi)$.
We denote by $\mathfrak{A}^{\ast}_{n,m}$ the set of parenthesis presentation with $\ast$ consisting 
of $n$ ``$($"'s, $n$ ``$)$"'s and $nm$ $\ast$'s.

We denote by $\mathfrak{A'}_{n}^{m}$ the set of an $m$-tuple of Dyck paths $\mathfrak{a}:=(a_{1},\ldots,a_{m})$
of length $n$ such that $\alpha_m\le_{Y}\alpha_{m-1}\le_{Y}\ldots\le_{Y}\alpha_1$ in the Young order.

There are $m$ $\ast$'s between the left parenthesis ``$($" and 
the right parenthesis ``$)$" labeled by the same integer.
We enumerate $m$ $\ast$'s between two parentheses ``$($" and ``$)$" from right to left  by $1,\ldots,m$.

\begin{defn}
\label{defn:alphaast}
Let $\alpha^{\ast}(\pi)$ be a parenthesis presentation of $\pi$.
We define a Dyck word $\alpha_{i}^{I}(\pi)$ for $1\le i\le m$ consisting of $($'s and $)$'s  
by the following processes:
\begin{enumerate}
\item Erase all ``$)$"'s and all $\ast$'s but $\ast$ labeled by $i$ 
in $\alpha^{\ast}(\pi)$. 
\item Replace the remaining $\ast$'s by ``$)$"'s.
\end{enumerate}
We denote by $\alpha^{I}$ the map from $\mathfrak{S}_{n}^{(m)}$ to $\mathfrak{A'}_{n}^{m}$, 
and call it the parenthesis presentation of type $I$.
\end{defn} 

\begin{example}
\label{ex:alpha}
Let $\pi=122133$ for $m=2$. 
We have $\alpha^{\ast}(\pi)=(\ast(\ast\ast)\ast(\ast\ast))$ and 
\begin{align*}
&\alpha_{1}^{I}(\pi)=(())(), \\
&\alpha_{2}^{I}(\pi)=()()().
\end{align*}
\end{example}

\begin{prop}
\label{prop:312avoid}
Let $\alpha^{I}:\mathfrak{S}_{n}^{(m)}\rightarrow\mathfrak{A'}^{m}_{n}$ be a 
parenthesis presentation of type $I$.
Let $\pi,\pi'\in\mathfrak{S}_{n}^{(m)}(312)$ be two $m$-Stirling permutations.
Then we have $\alpha^{I}(\pi)\neq\alpha^{I}(\pi')$ if $\pi\neq\pi'$. 
In other words, $\alpha^{I}$ is a injective on  
$\mathfrak{S}_{n}^{(m)}(312)\subset\mathfrak{S}_{n}^{(m)}$.
\end{prop}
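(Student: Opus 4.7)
The plan is to reconstruct $\pi$ from the $m$-tuple $\alpha^{I}(\pi)$. I first observe that $\alpha^{\ast}(\pi)$, and therefore each $\alpha^{I}_{i}(\pi)$, depends only on the underlying unlabeled shape of the $(m+1)$-ary tree $\xi(\pi)$: the walk around the tree and the right-to-left labeling of the $m$ $\ast$'s at each internal node are intrinsic to that shape, and the rule defining $\alpha^{I}_{i}$ erases locally. Hence it is enough to verify (A) that $\alpha^{I}$ determines the shape, and (B) that within $\mathfrak{S}_{n}^{(m)}(312)$ the shape in turn determines $\pi$.

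For (A), I would argue recursively. If $v$ is an internal node with children $C_{1},\ldots,C_{m+1}$ whose subtrees have sizes $n_{1},\ldots,n_{m+1}$, then tracing the walk around $v$'s subtree shows that its contribution to $\alpha^{I}_{i}$ is
\[
\bigl(\,\alpha^{I}_{i}(C_{1})\cdots\alpha^{I}_{i}(C_{m+1-i})\,\bigr)\,\alpha^{I}_{i}(C_{m+2-i})\cdots\alpha^{I}_{i}(C_{m+1}),
\]
because the opening ``$($'' survives, the unique $\ast$ at $v$ labeled $i$ (sitting between $C_{m+1-i}$ and $C_{m+2-i}$) becomes ``$)$'', and the closing ``$)$'' at $v$ is erased. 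Applied at the root, this locates the matching right parenthesis of the outer left parenthesis of $\alpha^{I}_{i}(\pi)$ at position $2+2(n_{1}+\cdots+n_{m+1-i})$. Reading these matching positions for $i=1,\ldots,m$, together with the identity $n_{1}+\cdots+n_{m+1}=n-1$, recovers all the sizes $n_{j}$. Splitting each $\alpha^{I}_{i}(\pi)$ into the $m+1$ blocks displayed above and recursing on each child subtree reconstructs the full shape.

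For (B), I would combine Lemma~\ref{lemma:Dyck312} with a cardinality coincidence: the map $\zeta$ can be inverted on $\mathfrak{S}_{n}^{(m)}(312)$ by setting $u_{i}$ equal to one less than the position of the leftmost~$i$ in $\pi$, so $\zeta:\mathfrak{D}_{n}^{(1,m)}\to\mathfrak{S}_{n}^{(m)}(312)$ is a bijection; composing with the classical bijection between $(1,m)$-Dyck paths and $(m+1)$-ary tree shapes yields a bijection between those shapes and $\mathfrak{S}_{n}^{(m)}(312)$ compatible with $\xi$ (after forgetting labels), and (B) follows. The principal obstacle lies in (A): the walk-decomposition identity above must be verified with care, especially in reconciling the right-to-left $\ast$-labeling convention with the left-to-right splitting of the children, so that the matching right parenthesis of the outer left parenthesis aligns with the correct child-split. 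Once that identity is in place, the remainder of the argument is essentially bookkeeping.
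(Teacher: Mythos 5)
Your proposal is correct in its essentials but takes a genuinely different route from the paper. The paper argues by induction on $n$: it compares the truncations $\pi_{i}$, $\pi'_{i}$ obtained by deleting all values above the first level at which $\pi$ and $\pi'$ differ (or deletes the $1$'s when that level is $n$) and invokes the induction hypothesis. You instead reconstruct the tree shape directly from $\alpha^{I}(\pi)$ and then observe that within the $312$-avoiding class the shape determines the labeling. Your part (A) is the heart of the matter and is right: the contribution of a node to $\alpha^{I}_{i}$ is indeed $\bigl(A_{1}\cdots A_{m+1-i}\bigr)A_{m+2-i}\cdots A_{m+1}$ with each $A_{j}$ balanced of length $2n_{j}$, so the mate of the outer left parenthesis sits at position $2+2(n_{1}+\cdots+n_{m+1-i})$, and letting $i$ run from $m$ down to $1$ recovers all partial sums $n_{1},n_{1}+n_{2},\ldots$ and hence the $n_{j}$. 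This buys something the paper's proof does not: the paper's inductive step ``$\alpha^{I}(\pi_{i})\neq\alpha^{I}(\pi'_{i})$ implies $\alpha^{I}(\pi)\neq\alpha^{I}(\pi')$'' is asserted without justification, and your decomposition identity is essentially the missing ingredient that would justify it.

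Two points in your part (B) need repair, though both are fixable. First, the inverse of $\zeta$ is not ``one less than the position of the leftmost $i$ in $\pi$'': for $m=2$ the $312$-avoiding permutation $\pi=2211$ has step sequence $(0,0)$, while your rule returns $u_{1}=2$. The correct recipe is $u_{i}=\#\{\,j<p_{i} : \pi_{j}<i\,\}$, where $p_{i}$ is the position of the leftmost $i$ (equivalently, the position of $i$ after deleting all larger values); the verification that this sequence is weakly increasing is exactly where $312$-avoidance enters and deserves a sentence. Second, the claim that the composite bijection is ``compatible with $\xi$'' amounts to saying that every $(m+1)$-ary tree shape admits exactly one $312$-avoiding labeling, namely the one in which internal nodes are numbered in the order their left parentheses occur in the walk; this is true (and is implicit in the paper's subsequent bijection theorem), but in your write-up it is the one step that is asserted rather than argued, so it should be spelled out. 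With those two adjustments the proof is complete.
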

\begin{proof}
We prove the statement by induction on $n$.
When $n=2$, we have $m+1$ $m$-Stirling permutations 
and $|\mathfrak{A'}^{m}_{n}|=m+1$.
We assume that the statement is true up to $n-1$.

Let $\pi_i$ be a $m$-Stirling permutation obtained from $\pi$ by 
deleting integers in $[i+1,n]$.
Since $\pi\neq\pi'$, there exists an integer $i$ such that 
$\pi_{j}=\pi'_{j}$ for $1\le j\le i-1$ and $\pi_i\neq\pi'_{i}$.
If $i\le n-1$, $\alpha^{I}(\pi_{i})\neq\alpha^{I}(\pi'_{i})$ by induction 
assumption. This implies that $\alpha^{I}(\pi)\neq\alpha^{I}(\pi')$. 
When $i=n$, we consider a permutations $\widetilde{\pi}$ 
(resp. $\widetilde{\pi'}$) obtained from $\pi$ (resp. $\pi'$) by deleting 
$m$ integer $1$'s.
This permutation is size $n-1$ and by induction assumption, we 
have $\alpha^{I}(\widetilde{\pi})\neq\alpha^{I}(\widetilde{\pi'})$.
This implies  $\alpha^{I}(\pi)\neq\alpha^{I}(\pi')$, which completes the proof.
\end{proof}

\begin{remark}
Let $m=2$, $\pi=122133$ and $\pi'=133122$ be two $2$-Stirling permutations.
Then, we have $\alpha^{I}(\pi)=\alpha^{I}(\pi')=(\ast(\ast\ast)\ast(\ast\ast))$. 
The permutation $\pi'$ contains the pattern $312$.
This is one of the reasons why we need a condition of the pattern avoidance in 
Proposition \ref{prop:312avoid}. 
\end{remark}

If we replace $($'s and $)$'s by $N$ and $E$ in $\alpha_i^{I}(\pi)$ for 
$1\le i\le m$, we have a Dyck path in $\mathfrak{S}_{n}^{(1,1)}$.
By abuse of notation, we also denote by $\alpha^{I}_i$ the corresponding 
Dyck path. 

\begin{prop}
Let $\alpha_{i}^{I}(\pi)$ for $1\le i\le m$ be Dyck words defined as above.
Then, $\alpha^{I}_{i+1}\le_{Y}\alpha^{I}_{i}$ as Dyck paths. 
\end{prop}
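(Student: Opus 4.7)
The plan is to compare $\alpha^{I}_{i}$ and $\alpha^{I}_{i+1}$ by pulling back to the common base word $\alpha^{\ast}(\pi)$ and then appealing to the step-sequence characterization of the Young order on $(1,1)$-Dyck paths. The key observation is that the left parentheses of both $\alpha^{I}_{i}$ and $\alpha^{I}_{i+1}$ come from the same ``$($"'s of $\alpha^{\ast}(\pi)$; only the right parentheses differ, and those of $\alpha^{I}_{i+1}$ sit systematically further to the left in $\alpha^{\ast}(\pi)$ than those of $\alpha^{I}_{i}$.

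First, I would fix $k \in [1,n]$ and let $p_{k}$ denote the position of the $k$-th ``$($" in $\alpha^{\ast}(\pi)$. Because the deletion operations in Definition \ref{defn:alphaast} preserve the relative order of the surviving symbols, the step-sequence value $u_{k}(\alpha^{I}_{\ell})$ equals the number of stars labeled $\ell$ in $\alpha^{\ast}(\pi)$ lying strictly to the left of $p_{k}$. So it suffices to compare these counts for $\ell=i$ and $\ell=i+1$.

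Second, I would argue label by label. For each $j \in [1,n]$, the matched pair ``$($" and ``$)$" corresponding to label $j$ in $\alpha^{\ast}(\pi)$ sandwiches exactly $m$ stars (one for each of the $m$ regions between the $m+1$ edges of label $j$ in $T(\pi)$), enumerated from right to left by $1, 2, \ldots, m$. In particular, the star labeled $i+1$ for the group $j$ is strictly to the left of the star labeled $i$ for the group $j$. Consequently, whenever the star labeled $i$ for group $j$ precedes $p_{k}$, so does the star labeled $i+1$ for group $j$. Summing this implication over all labels $j$ yields
\begin{equation*}
u_{k}(\alpha^{I}_{i+1}) \ge u_{k}(\alpha^{I}_{i}), \qquad \forall k \in [1,n].
\end{equation*}

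Finally, I would invoke the standard characterization that on $(1,1)$-Dyck paths of a fixed size, $P \le_{Y} Q$ if and only if $u_{k}(P) \ge u_{k}(Q)$ for every $k$. The forward implication is immediate from the cover relation. For the converse, pick the smallest index $k$ with $u_{k}(P) > u_{k}(Q)$; minimality together with $u_{k-1}(P) \le u_{k-1}(Q) \le u_{k}(Q) < u_{k}(P)$ guarantees $u_{k-1}(P) < u_{k}(P)$, so the move decreasing $u_{k}(P)$ by one is a legal $\lessdot_{Y}$ cover; iterating reaches $Q$. Applying this with $P=\alpha^{I}_{i+1}$ and $Q=\alpha^{I}_{i}$ completes the proof. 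I do not anticipate a real obstacle: the only mildly delicate point is the ``label by label'' argument, where one must be careful that intervening subtrees inside each parenthesized block do not disturb the comparison — but because each star's label depends only on its position within its own group, these subtrees are irrelevant to the counting.
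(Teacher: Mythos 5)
Your proposal is correct and follows essentially the same route as the paper: the paper's (much terser) proof likewise observes that the $\ast$ labeled $i+1$ in each group sits to the left of the $\ast$ labeled $i$, so the right parentheses of $\alpha^{I}_{i+1}$ only move left relative to those of $\alpha^{I}_{i}$, forcing the entrywise inequality of step sequences. Your additional verification that entrywise domination of step sequences is equivalent to comparability in the Young order is a detail the paper leaves implicit, but it is the same argument in substance.
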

\begin{proof}
The Dyck word $\alpha^{I}_{i}$ is obtained from $\alpha^{\ast}(\pi)$ by replacing 
the $i$-th $\ast$ from right with a right parenthesis ``$)$". 
By construction, the $i+1$-th $\ast$ is left to the $i$-th $\ast$ in $\alpha^{\ast}$.
This implies that $\alpha^{I}_{i+1}<_{Y}\alpha^{I}_{i}$ for all $i\in[1,m]$.
\end{proof}

We will construct a multi-permutation $\pi\in\mathfrak{S}_{n}^{(m)}$ from 
$\alpha^{I}_i$ for $1\le i\le m$.
By regarding $\alpha^{I}_{i}$ as a Dyck path, we have a tree $T_{i}:=T(\alpha^{I}_{i})$ 
as in Section \ref{sec:rtree}.
Let $L^{\mathrm{pre}}_{i}$ be a natural label of $T_{i}$ such that 
the pre-order word of the label is identity, namely, we have 
$w_{\mathrm{pre}}(L^{\mathrm{pre}}_{i})=12\ldots n$.
Then, we define a word $w_{i}$ from $L^{\mathrm{pre}}_{i}$ by
$w_{i}:=w_{\mathrm{post}}(L^{\mathrm{pre}}_{i})$ for $1\le i\le m$.
Then, we define a sequence of non-negative integers $\mathfrak{u}:=(u_1,\ldots,u_{n})$ by
\begin{align}
\label{eqn:ufromd}
u_{j}:=\sum_{i=1}^{m}d_{i}(j),
\end{align}
where 
\begin{align*}
d_{i}(j):=\#\{ k<j| k \text{ is left to } j \text{ in } w_{i} \}.
\end{align*}

\begin{example}
We consider the same example as Example \ref{ex:alpha}.
The post-order words for the Dyck paths corresponding to $\alpha^{I}_1(\pi)=(())()$ and 
$\alpha^{I}_2(\pi)=()()()$ are $213$ and $123$.
From these, we have the step sequence $\mathfrak{u}=(0,1,4)$,
Applying $\zeta$ on $\mathfrak{u}$, we have $122133$.
\end{example}

\begin{lemma}
\label{lemma:mDTSS}
Let $\mathfrak{u}:=(u_1,\ldots,u_{n})$ defined from $\alpha^{I}_{i}$ for $1\le i\le m$ as above.
Then, we have $u_{1}\le u_{2}\le \ldots\le u_{n}$ and $u_{k}\le m(k-1)$ for $k\in[1,n]$.
This means that $\mathfrak{u}$ is a step sequence for some rational Dyck path $P$.
\end{lemma}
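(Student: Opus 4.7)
The plan is to reinterpret each $d_i(j)$ in terms of the tree structure of $T_i$. Concretely, I will establish
\[
d_i(j) = (j-1) - D_i(j),
\]
where $D_i(j)$ denotes the number of proper ancestor edges of the edge labeled $j$ in $T_i$ (so a root-edge has $D_i(\cdot)=0$). Summing this identity over $i=1,\ldots,m$ yields
\[
u_j = m(j-1) - \sum_{i=1}^{m} D_i(j),
\]
and both inequalities of the lemma then follow almost mechanically from this form.

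To prove the identity, I would use that the natural label $L^{\mathrm{pre}}_{i}$ is, by definition, the one for which every edge of $T_i$ receives its pre-order rank. Fix two edges with labels $k<j$. If $k$ is a proper ancestor of $j$, then $j$ lies strictly inside the subtree hanging off $k$, so in the post-order $w_i$ the edge $k$ is visited only after every descendant (in particular $j$) has been processed; hence $k$ appears to the right of $j$ in $w_i$. On the other hand, if $k$ is not an ancestor of $j$, then $k<j$ in pre-order forces the entire subtree rooted at $k$ to be traversed strictly before the edge $j$ is reached, and so $k$ precedes $j$ in the post-order as well. Since every ancestor of $j$ automatically has a smaller pre-order label, this partitions $\{1,\ldots,j-1\}$ into ancestors (contributing $D_i(j)$ elements to the right of $j$) and non-ancestors (contributing $(j-1)-D_i(j)$ elements to the left of $j$), giving $d_i(j) = (j-1) - D_i(j)$.

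With the identity in hand, the bound $u_k \le m(k-1)$ is the triviality $\sum_i D_i(k) \ge 0$, with equality at $k=1$. For the monotonicity $u_j \le u_{j+1}$, the formula rearranges to
\[
u_{j+1} - u_j \;=\; m - \sum_{i=1}^{m}\bigl(D_i(j+1) - D_i(j)\bigr),
\]
so it suffices to show that $D_i(j+1) - D_i(j) \le 1$ for each $i$. This is the standard fact that in a pre-order traversal of an ordered rooted tree the depth can rise by at most one between two consecutive edges: either edge $j+1$ is the first-born child edge of edge $j$ (depth increases by exactly $1$), or the traversal first backtracks some number of levels before stepping into a next sibling (depth stays the same or decreases).

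I expect the pre-order/post-order bookkeeping in the first step to be the main subtlety, since it requires a clean partition of the smaller labels according to the ancestor relation. Once the identity $d_i(j)=(j-1)-D_i(j)$ is in place, both inequalities of the lemma are direct formal consequences of nonnegativity of depths and the depth-increment bound $D_i(j+1)-D_i(j)\le 1$.
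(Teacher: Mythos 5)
Your proof is correct and follows essentially the same route as the paper: both reduce the lemma to the termwise bounds $d_i(j)\le j-1$ and $d_i(k)\le d_i(k+1)$ and then sum over $i$. The only difference is that the paper asserts the monotonicity $d_i(k)\le d_i(k+1)$ as "clear," whereas you actually justify it via the identity $d_i(j)=(j-1)-D_i(j)$ (ancestors of $j$ are exactly the smaller pre-order labels appearing to its right in post-order) together with the standard depth-increment bound $D_i(j+1)-D_i(j)\le 1$; this is a genuine and welcome filling-in of the paper's omitted step, not a different argument.
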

%%%%%%%%%%%%
\begin{proof}
From Eqn. (\ref{eqn:ufromd}), it is clear that $u_{k}\le u_{k+1}$ since 
we have $d_{i}(k)\le d_{i}(k+1)$ for all $k$.
Further, we have $u_k\le m(k-1)$ since $d_{i}(j)\le j-1$ by definition.
These imply that $\mathfrak{u}$ is a step sequence of some Dyck path $P$.
\end{proof}

The purpose of the rest of this subsection is to give a bijection between 
$\mathfrak{S}_{n}^{(m)}(312)$ and the set $\mathfrak{A}_{n}^{m}$ of an $m$-tuple 
of Dyck paths. 
First, we study the relation between an $m$-tuple of Dyck paths denoted by $\mathfrak{a}$ 
and step sequences of rational Dyck paths in $\mathfrak{D}_{n}^{(1,m)}$.
Secondly, we introduce the notion of admissibility for $\mathfrak{a}$.
This admissibility corresponds to considering $\mathfrak{S}_{n}^{(m)}(312)$
in $\mathfrak{S}_{n}^{(m)}$.

We give a simple definition of a map from $\mathfrak{a}\in\mathfrak{A'}_{n}^{m}$ to 
$\mathfrak{u}\in\mathfrak{U}_{n}^{m}$. 
\begin{defn}
\label{defn:atou}
Let $\mathfrak{U}_{n}^{m}$ be the set of step sequence $\mathfrak{u}:=(u_1,\ldots,u_{n})$ for 
rational Dyck paths in $\mathfrak{D}_{n}^{(1,m)}$. 

Then, we define a map $\beta:\mathfrak{A'}_{n}^{m}\rightarrow \mathfrak{U}_{n}^{m}$, 
$\mathfrak{a}=(a_{1},\ldots,a_{m})\mapsto\mathfrak{u}=(u_{1},\ldots,u_{n})$ 
by 
\begin{align}
\label{eqn:atousum}
\mathfrak{u}=\sum_{j=1}^{m}\mathfrak{u}(a_{j}),
\end{align}
where $\mathfrak{u}(a_{j})$ is the step sequence associated with the Dyck path $a_{j}$.
\end{defn}

\begin{prop}
\label{prop:utoa1}
The map $\beta$ in Definition \ref{defn:atou} is well-defined. In other words,
given $\mathfrak{a}$, $\beta(\mathfrak{a})$ is a step sequence of some Dyck path 
$P\in\mathfrak{D}_{n}^{(1,m)}$.
\end{prop}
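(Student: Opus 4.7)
The plan is to verify directly the two defining conditions of a step sequence for an element of $\mathfrak{D}_n^{(1,m)}$, namely weak monotonicity $u_1 \le u_2 \le \ldots \le u_n$ and the bound $u_k \le m(k-1)$ for all $k \in [1,n]$, by adding term-by-term the corresponding inequalities for each of the constituent $(1,1)$-Dyck path step sequences.

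First I would unpack the definition: for each $j \in [1,m]$, $a_j \in \mathfrak{D}_n^{(1,1)}$, so its step sequence $\mathfrak{u}(a_j) = (u_1(a_j), \ldots, u_n(a_j))$ satisfies $u_k(a_j) \le u_{k+1}(a_j)$ and $u_k(a_j) \le k-1$ (the general bound $u_k \le \tfrac{b}{a}(k-1)$ with $a=b=1$). Setting $u_k := \sum_{j=1}^m u_k(a_j)$ as in Equation (\ref{eqn:atousum}), weak monotonicity of the sum follows immediately by summing the coordinate-wise inequalities $u_k(a_j) \le u_{k+1}(a_j)$ over $j$. Similarly, summing $u_k(a_j) \le k-1$ over $j$ produces $u_k \le m(k-1)$, which is precisely the upper-bound condition for a step sequence in $\mathfrak{D}_n^{(1,m)}$ (again using $\tfrac{b}{a} = m$).

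These two observations together exhibit $\beta(\mathfrak{a})$ as a valid step sequence, hence encoding a unique $(1,m)$-Dyck path $P$, so the map $\beta$ is well-defined. The argument is essentially a one-line verification and I do not anticipate any real obstacle; in particular, the additional Young-order hypothesis $a_m \le_Y \ldots \le_Y a_1$ built into the definition of $\mathfrak{A'}_n^m$ is not required here, and will presumably play its role later when one tries to invert $\beta$ or to match $\mathfrak{A'}_n^m$ bijectively with the $312$-avoiding subset $\mathfrak{S}_n^{(m)}(312)$.
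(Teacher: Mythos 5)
Your proof is correct and follows essentially the same route as the paper: sum the coordinate-wise inequalities satisfied by each $\mathfrak{u}(a_j)$ to get weak monotonicity and the bound $u_k\le m(k-1)$ for $\beta(\mathfrak{a})$. In fact you are slightly more careful than the paper, which only records the non-decreasing property explicitly and leaves the upper bound implicit; your remark that the Young-order condition on $\mathfrak{A'}_n^m$ is not needed here is also accurate.
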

%%%%%%%%%%
\begin{proof}
We first show that $\beta(\mathfrak{a})$ is a step sequence of some Dyck path $P$.
Since $\mathfrak{u}(a_{i})$ is a step sequence of a Dyck path in $\mathfrak{D}_{n}^{(1,1)}$,
entries in $\mathfrak{u}(a_{i})$ form a non-decreasing integer sequence.
The sum of non-decreasing integer sequences is also a non-decreasing integer sequence,
$\mathfrak{u}$ in Eqn. (\ref{eqn:atousum}) is obviously a step sequence of some Dyck 
path $P$.
\end{proof}

We give an alternative definition of $\mathfrak{a}:=(a_1,\ldots,a_{n})$ in $\mathfrak{A'}_{n}^{m}$
obtained from a step sequence $\mathfrak{u}:=(u_{1},\ldots,u_{n})$ in $\mathfrak{D}_{n}^{(1,m)}$ 
as follows. 
Suppose that $u_{i}=u_{i+1}$.
We say that the {\it second primitive subsequence of $\mathfrak{u}_{P}$ at position $i$ with 
respect to $b$}
is the unique subsequence $(u_i,u_{i+1},\ldots,u_{k})$
such that 
\begin{align}
\label{eqn:2ndps}
\begin{aligned}
&u_{j}-u_{i}\le b(j-i-1), \quad \forall j\in[i+1,k] \\
&\text{either\ } k=n,\ \text{or}\ u_{k+1}-u_{i}>b(k-i). 
\end{aligned}
\end{align}

We define a non-negative integer sequence 
$\mathfrak{l}^{b}:=(\mathfrak{l}_{1},\ldots,\mathfrak{l}_{n})$ as follows. 
\begin{enumerate}
\item In case of $u_{i}\neq u_{i+1}$. We define $\mathfrak{l}_{i}:=u_{i}$. 
\item In case of $u_{i}=u_{i+1}$. 
Let $(u_{i},\ldots,u_{k})$ be the second primitive subsequence of $\mathfrak{u}_{P}$
at position $i$ with respect to $b$.
In this case, we define $\mathfrak{l}_{i}:=u_{i}+b(k-i)$. 
\end{enumerate} 
We define a non-negative integer sequence $\mathfrak{u'}_{b}:=(u'_1,\ldots,u'_{n})$ by 
\begin{align*}
u'_{i}:=\#\{ j<i |\ \mathfrak{l}_{j}<\mathfrak{l}_{i}\}.
\end{align*} 
We construct an $m$-tuple of non-negative integer sequences by the following way:
\begin{enumerate}
\item Set $b=m$ and $\mathfrak{u}:=\mathfrak{u}_{P}$.
\item Construct $\mathfrak{u'}_{b}$ as above.
\item Replace $\mathfrak{u}$ by $\mathfrak{u}-\mathfrak{u'}_{b}$, and 
decrease $b$ by one. Then, go to (2). The algorithm stops when $b=1$.
\end{enumerate}

\begin{example}
Let $m=3$ and $P\in\mathfrak{D}_{10}^{(1,3)}$ be a rational Dyck path 
with the step sequence $(0,0,0,2,5,6,17,18,18,20)$.
We have three second primitive integer sequence with respect to $3$:
\begin{align*}
(0,0,0,2,5,6), \quad (0,0,2,5,6), \quad (18,18,20).
\end{align*}
The integer sequence $\mathfrak{l}^{3}$ is given by 
\begin{align*}
\mathfrak{l}^{3}=(6,5,1,2,3,4,7,10,8,9).
\end{align*}
The step sequence $\mathfrak{u'}_{3}=(0,0,0,1,2,3,6,7,7,8)$.
By a similar computation, we have 
\begin{align*}
&\mathfrak{u'}_{2}=(0,0,0,1,2,2,6,6,6,7), \\
&\mathfrak{u'}_{1}=(0,0,0,0,1,1,5,5,5,5).
\end{align*}
\end{example}

\begin{lemma}
\label{lemma:sttostm}
The integer sequences $\mathfrak{u'}_{b}$ for $1\le b\le m$ are 
non-decreasing sequence.
\end{lemma}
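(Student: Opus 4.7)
My plan is first to reduce the monotonicity of $(u'_1,\ldots,u'_n)$ to a structural property of the sequence $\mathfrak{l}^{b}$, and then to establish that property using a potential function together with the block decomposition of $\mathfrak{u}$. Setting $A_{i}:=\{j<i\ |\ \mathfrak{l}_{j}<\mathfrak{l}_{i}\}$ so that $u'_{i}=|A_{i}|$, a short set-theoretic computation will show that $u'_{i+1}\ge u'_{i}$ is automatic whenever $\mathfrak{l}_{i}\le \mathfrak{l}_{i+1}$, while in the remaining case
\begin{align*}
u'_{i+1}-u'_{i}=-|\{j<i\ |\ \mathfrak{l}_{i+1}\le \mathfrak{l}_{j}<\mathfrak{l}_{i}\}|.
\end{align*}
The whole lemma will therefore reduce to the structural claim: whenever $\mathfrak{l}_{i}>\mathfrak{l}_{i+1}$, no index $j<i$ satisfies $\mathfrak{l}_{i+1}\le \mathfrak{l}_{j}<\mathfrak{l}_{i}$.

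Next I would introduce the potential $F(k):=u_{k}-bk$. In these variables the defining conditions (\ref{eqn:2ndps}) for the second primitive subsequence at position $j$ (when $u_{j}=u_{j+1}$) become simply $F(l)\le F(j+1)$ for $l\in[j+1,k_{j}]$, together with $F(k_{j}+1)>F(j+1)$ or $k_{j}=n$, and then $\mathfrak{l}_{j}=u_{j}+b(k_{j}-j)$. A short case check over the four combinations of type (1) vs.\ type (2) at the positions $i$ and $i+1$ will show that $\mathfrak{l}_{i}>\mathfrak{l}_{i+1}$ forces $u_{i}=u_{i+1}$, and moreover that $\mathfrak{l}_{i+1}\ge v_{\ell}$, where $v_{\ell}:=u_{i}=u_{i+1}$.

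I would then decompose $\mathfrak{u}$ into its maximal constant blocks $B_{\ell}=[m_{1}^{(\ell)},m_{2}^{(\ell)}]$ with values $v_{1}<v_{2}<\cdots$. Using the identity $F(j+2)=F(j+1)-b$ inside a block to conclude $k_{j+1}\le k_{j}$, and hence $\mathfrak{l}_{j+1}\le \mathfrak{l}_{j}-b$, I obtain that the restriction of $\mathfrak{l}$ to each block is strictly decreasing. In particular, any $j<i$ lying in the same block as $i$ already satisfies $\mathfrak{l}_{j}>\mathfrak{l}_{i}$, so it is automatically excluded from $[\mathfrak{l}_{i+1},\mathfrak{l}_{i})$.

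The final case is $j$ in an earlier block $B_{\ell'}$ with $v_{\ell'}<v_{\ell}$. If $j$ is of type (1), or of type (2) with $k_{j}\le i$, then either $\mathfrak{l}_{j}=v_{\ell'}$ or $\mathfrak{l}_{j}<u_{k_{j}+1}\le v_{\ell}$, whence $\mathfrak{l}_{j}<v_{\ell}\le \mathfrak{l}_{i+1}$. Otherwise $k_{j}\ge i+1$, in which case $F(i+1)\le F(j+1)$ rewrites as $v_{\ell}-v_{\ell'}\le b(i-j)$; moreover the inequality $F(l)\le F(i+1)\le F(j+1)$ on $[i+1,k_{i}]$ forces the excursion from $j+1$ to contain $[i+1,k_{i}]$, so $k_{j}\ge k_{i}$. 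Combining these two bounds gives
\begin{align*}
\mathfrak{l}_{j}-\mathfrak{l}_{i}=b(k_{j}-k_{i})+b(i-j)-(v_{\ell}-v_{\ell'})\ge 0,
\end{align*}
which again rules out $\mathfrak{l}_{j}\in[\mathfrak{l}_{i+1},\mathfrak{l}_{i})$. The main obstacle will be precisely this last sub-case: the two inequalities $v_{\ell}-v_{\ell'}\le b(i-j)$ and $k_{j}\ge k_{i}$ have to be combined in exactly the right direction to obtain $\mathfrak{l}_{j}\ge \mathfrak{l}_{i}$, and it is the reformulation via the $F$-potential that makes this bookkeeping transparent.
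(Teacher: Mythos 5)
Your proof is correct, and it rests on the same two combinatorial facts that the paper's own (much terser) proof invokes: the sequence $\mathfrak{l}$ is strictly decreasing on each maximal constant block of $\mathfrak{u}$ and strictly increasing across positions where $\mathfrak{u}$ strictly increases, and two second primitive subsequences are either disjoint or nested. The difference is one of completeness rather than of route: the paper stops at ``from these observations it is easy to see,'' whereas you isolate the exact statement that needs proving (if $\mathfrak{l}_{i}>\mathfrak{l}_{i+1}$ then no $j<i$ has $\mathfrak{l}_{j}\in[\mathfrak{l}_{i+1},\mathfrak{l}_{i})$) and verify it in every case. The only non-routine sub-case --- a type-(2) index $j$ in an earlier block whose excursion reaches past $i$, so that $k_{j}\ge i+1$ --- is precisely the step the paper leaves implicit, and your potential $F(k)=u_{k}-bk$ converts it into the two inequalities $v_{\ell}-v_{\ell'}\le b(i-j)$ and $k_{j}\ge k_{i}$, which combine to give $\mathfrak{l}_{j}\ge\mathfrak{l}_{i}$ with the signs coming out right. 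I checked the arithmetic in each sub-case (including that the termination condition applies because $k_{j}\le i<n$ in the disjoint case, and that the half-open interval $[\mathfrak{l}_{i+1},\mathfrak{l}_{i})$ correctly absorbs the possibility $\mathfrak{l}_{j}=\mathfrak{l}_{i}$), and the argument goes through; it can stand as a full replacement for the paper's sketch.
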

%%%%%%%%%%
\begin{proof}
By definition of $\mathfrak{u'}_{b}$, when we have a sequence 
of distinct integers $(u_{i},u_{i+1},\ldots,u_{j})$, 
we have a strictly increasing integer sequence 
$(\mathfrak{l}_{i},\ldots,\mathfrak{l}_{j})$.
When we have a sequence $(u_{i},\ldots,u_{j})$ such that 
all $u_{k}$'s are the same value, 
we have a strictly decreasing integer sequence 
$(\mathfrak{l}_{i},\ldots,\mathfrak{l}_{j})$.

Suppose that we have two second primitive subsequences 
at positions $i$ and $j$ with $i<j$.
We denote these two subsequence by $(u_i,\ldots,u_{k})$
and $(u_j,\ldots u_{k'})$.
By Eqn. \ref{eqn:2ndps}, it is clear that we have 
$k<j$ or $k'\le k$, and $j<k\le k'$ never happens.

From these observations, it is easy to see that $\mathfrak{u'}_{b}$ 
is a non-decreasing integer sequence.
\end{proof}

From Lemma \ref{lemma:sttostm}, an integer sequence $\mathfrak{u'}_{b}$, 
$1\le b\le n$, gives a step sequence for some Dyck path.
This naturally gives the following definition from a Dyck path 
to an $m$-tuple of Dyck paths.
%%%%%%%%%%%
\begin{defn}
\label{defn:gamma}
We denote by $\gamma$ the map from $\mathfrak{u}_{P}$ to 
$\mathfrak{u'}_{b}$ for $1\le b\le m$.
\end{defn}

The next proposition shows that the step sequences 
$\mathfrak{u'}_{b}$ 
are the same step sequences obtained from the parenthesis presentation.
%%%%%%%%%%%
\begin{prop}
let $\alpha^{I}_{i}(\pi)$ for $1\le i\le m$ be an $m$-tuple of Dyck paths obtained 
from the parenthesis presentation of type $I$.
Then, the step sequence of $\alpha^{I}_{i}(\pi)$ is given by $\mathfrak{u'}_{i}$
constructed as above. 
\end{prop}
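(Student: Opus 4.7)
The plan is to use downward induction on $b$ from $m$ to $1$, combined with a tree-theoretic formula for the step sequence of $\alpha^I_i(\pi)$. As a preliminary, since $\pi = \zeta(\mathfrak{u}_P) \in \mathfrak{S}_n^{(m)}(312)$ by Lemma~\ref{lemma:Dyck312}, I would first verify that the labels of the internal nodes of the $(m+1)$-ary tree $T(\pi)$ coincide with their pre-order indices, so the $j$-th left parenthesis in $\alpha^I_i(\pi)$ corresponds to the node labeled $j$. Writing the $m+1$ subtrees of an internal node $k$ as $T_0(k), T_1(k), \ldots, T_m(k)$ from left to right, the $\ast$ labeled $i$ of any other node $k$ lies between $T_{m-i}(k)$ and $T_{m-i+1}(k)$ in the anti-clockwise walk around $T(\pi)$. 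Thus this $\ast$ precedes the left parenthesis of node $j$ if and only if either (a) $k<j$ is not an ancestor of $j$, or (b) $k$ is an ancestor of $j$ with $j \in T_l(k)$ for some $l \geq m-i+1$. This gives the tree formula
\begin{align*}
u(\alpha^I_i(\pi))(j) = (j-1) - \#\{k : k \text{ ancestor of } j,\ j \in T_l(k),\ l \leq m-i\}.
\end{align*}

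The base case $b = m$ is the heart of the proof, and the central claim is the identification $\mathfrak{l}_i = u_i + m \cdot d_0(i)$, where $d_0(i)$ denotes the number of internal nodes in $T_0(i)$. The key input is that 312-avoidance translates $u_i = u_{i+1}$ into ``node $i+1$ is the leftmost child of $i$,'' and more generally that the second primitive subsequence at $i$ with respect to $m$ terminates precisely at $k = i + d_0(i)$, exhausting the pre-order block of $T_0(i)$. With this identification in hand, a case analysis on $k<j$ shows $\mathfrak{l}_k > \mathfrak{l}_j$ exactly when $k$ is an ancestor of $j$ with $j \in T_0(k)$, using the termination inequality $u_{k + d_0(k) + 1} - u_k > m \cdot d_0(k)$ in the remaining cases. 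Hence
\begin{align*}
u'_m(j) = \#\{k < j : \mathfrak{l}_k < \mathfrak{l}_j\} = (j-1) - \#\{k \text{ ancestor of } j : j \in T_0(k)\} = u(\alpha^I_m(\pi))(j).
\end{align*}

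For the inductive step, once $\mathfrak{u}'_m = \mathfrak{u}(\alpha^I_m(\pi))$ is established, the difference $\mathfrak{u}_P - \mathfrak{u}'_m = \sum_{i<m} \mathfrak{u}(\alpha^I_i(\pi))$ (a consequence of Lemma~\ref{lemma:mDTSS}) is the step sequence of a $(1, m-1)$-Dyck path $P^{(1)}$. The associated $(m-1)$-Stirling permutation $\pi^{(1)} = \zeta(\mathfrak{u}_{P^{(1)}})$ has parenthesis presentation $\alpha^\ast(\pi^{(1)})$ obtained from $\alpha^\ast(\pi)$ by deleting the leftmost $\ast$ in each block; the remaining $\ast$s keep their labels $1, \ldots, m-1$, so $\alpha^I_i(\pi^{(1)}) = \alpha^I_i(\pi)$ for $1 \leq i < m$. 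The algorithm applied to $P^{(1)}$ with parameter $m-1$ reproduces the subsequent iterations of the original algorithm, and the induction hypothesis delivers the desired equality for all $i < m$.

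The main obstacle I anticipate is the identification $\mathfrak{l}_i = u_i + m \cdot d_0(i)$ in the base case. Proving that the second primitive subsequence $(u_i, \ldots, u_k)$ with respect to $m$ captures exactly the pre-order block of $T_0(i)$---neither cutting short at an earlier index nor overshooting---requires a careful inductive analysis along the pre-order walk of $T_0(i)$, verifying the inequality $u_j - u_i \leq m(j-i-1)$ and its sharpness using 312-avoidance critically to rule out intrusions from siblings of $i$ or its descendants.
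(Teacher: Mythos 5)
Your proof is correct and follows essentially the same route as the paper: reduce to the case $b=m$ by peeling off the $\ast$'s labeled $m$, and identify $\mathfrak{l}_j$ with the position of the $\ast$ labeled $m$ belonging to node $j$ in $\alpha^{\ast}(\pi)$ (your formula $\mathfrak{l}_j=u_j+m\,d_0(j)$ is exactly that position, since the second primitive subsequence at $j$ spans the pre-order block of $T_0(j)$). The paper merely asserts this identification and the validity of the recursive reduction to $b=m$, whereas you verify both through the pre-order structure of the $(m+1)$-ary tree; this supplies detail the paper omits but is not a different method.
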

%%%%%%%%%
\begin{proof}
Since $\mathfrak{u'}_{b}$ for $1\le b\le m$ is defined recursively with 
respect to $b$, it is enough to prove the statement for $b=m$.

Recall that we construct a Dyck path $\alpha^{I}_{m}(\pi)$ from a parenthesis 
presentation $\alpha^{\ast}(\pi)$.
In $\alpha^{\ast}(\pi)$, the marks $\ast$'s are enumerated from right to 
left by $1,\ldots,m,1,\ldots,m,\ldots,m$.
It is clear that the label $\mathfrak{l}^{m}$ constructed from the 
second primitive subsequence is nothing but the position of $\ast$ with 
label $m$ in $\alpha^{\ast}(\pi)$. 
Therefore, the integer sequence $\mathfrak{u'}_{m}$ is exactly the step 
sequence of $\alpha^{I}_{m}(\pi)$.
\end{proof}

\begin{example}
Let $m=3$ and $\mathfrak{u}(P)=(0,3,4)$. 
We have three step sequences: $\mathfrak{u}(\alpha^{I}_1)=(0,1,1)$, 
$\mathfrak{u}(\alpha^{I}_2)=(0,1,1)$ 
and $\mathfrak{u}(\alpha^{I}_3)=(0,1,2)$.
In terms of Dyck words, we have
\begin{align*}
\alpha^{I}_1=()(()), \quad \alpha^{I}_2=()(()), \quad \alpha^{I}_3=()()().
\end{align*}
These three Dyck paths satisfy 
$\mathfrak{u}(\alpha^{I}_3)\le_{Y}\mathfrak{u}(\alpha^{I}_{2})
\le_{Y}\mathfrak{u}(\alpha^{I}_{1})$.
\end{example}

We introduce a notion of admissibility for an $m$-tuple of Dyck paths.
\begin{defn}
\label{defn:adm}
Let $\mathfrak{a}$ be an $m$-tuple of Dyck paths.
We say that $\mathfrak{a}$ is admissible if 
\begin{align}
\mathfrak{a}=\gamma\circ\beta(\mathfrak{a}).
\end{align}
We denote by $\mathfrak{A}_{n}^{m}$ be the set of admissible $m$-tuple of
Dyck paths in $\mathfrak{A'}_{n}^{m}$.
\end{defn}

\begin{example}
Let $m=2$ and $n=3$. We have two double Dyck words
\begin{align*}
\mathfrak{a}_{1}:=(NNNEEE , NENENE), \quad \mathfrak{a}'_{1}:=(NNENEE,NENNEE).
\end{align*}
By applying $\beta$ on both $\mathfrak{a}_1$ and $\mathfrak{a}'_{1}$, 
we have $\mathfrak{u}=(0,1,2)$.
It is easy to see that $\mathfrak{a}_1$ is admissible and $\mathfrak{a}'_1$ is non-admissible.
Similarly, consider two double Dyck words
\begin{align*}
\mathfrak{a}_{2}:=(NNENEE,NNENEE), \quad \mathfrak{a}'_{2}:=(NNNEEE,NNEENE).
\end{align*}
Both words give $\mathfrak{u}=(0,0,2)$ by the action of $\beta$, 
and $\mathfrak{a}'_{2}$ is non-admissible.

We have $14$ double Dyck words in $\mathfrak{A'}_{n}^{m}$, and the above two 
double Dyck words $\mathfrak{a}'_{1}$ and $\mathfrak{a}'_2$ are non-admissible.
\end{example}

Recall that we have a map $\alpha^{I}:\mathfrak{S}_{n}^{(m)}\rightarrow\mathfrak{A'}_{n}^{m}$.
The map $\alpha^{I}$ is not injective as shown in Proposition \ref{prop:312avoid}.
In Definition \ref{defn:adm}, we introduce the notion of admissibility on an $m$-tuple 
of Dyck words. 
This admissibility reflects that there exists a pair of multi-permutations $\pi$ and $\pi'$ 
in $\mathfrak{S}_{n}^{(m)}$ such that $\alpha^{I}(\pi)=\alpha^{I}(\pi')$.
Therefore, the admissibility detects the obstacle for injectivity of $\alpha^{I}$.
Since $\mathfrak{A'}_{n}^{m}$ contains a non-admissible $m$-tuple of Dyck words,
we consider a map $\alpha^{\ast}:\mathfrak{A}^{\ast}_{n,m}\rightarrow\mathfrak{A}_{n}^{m}$

Below, we will construct the inverse  of $\alpha^{\ast}$ 
(denoted by $(\alpha^{\ast})^{-1}$) which maps $\mathfrak{A}_{n}^{m}$ to 
$\mathfrak{A}^{\ast}_{n,m}$.

Let $\mathfrak{a}:=(a_1,\ldots,a_m)\in\mathfrak{A}_{n}^{m}$.
Let $N^{i}_{j}$ be the $j$-th ``$($" from left in $a_{i}$ with $1\le i\le m$ and $1\le j\le n$.
We define a sequence of non-negative integers $s^{N}:=(s^{N}_1,\ldots,s^{N}_{n})$ by
\begin{align}
s^{N}_{j}:=\sum_{i=1}^{m}n^{i}_{j},
\end{align}
where $n^{i}_{j}$ is the number of ``$)$"'s which is left to $N^{i}_{j}$ in $a_{i}$.
Similarly, let $E^{i}_{j}$ be the $j$-th ``$)$" from right in $a_{i}$,
We define a sequence $s^{E}:=(s^{E}_1,\ldots,s^{E}_{n})$ by
\begin{align}
s^{E}_{j}:=\sum_{i=1}^{m}e^{i}_{j},
\end{align}
where $e^{i}_{j}$ is the number of ``$($"'s which is right to $E^{i}_{j}$ in $a_{i}$.

We construct a parenthesis presentation from $s^{N}$ and $s^{E}$ as follows.
\begin{defn}
\label{defn:atopp}
Let $\mathfrak{a}$, $s^{N}$ and $s^{E}$ as above. 
We construct a parenthesis presentation as follows:
\begin{enumerate}
\item We put $nm$ $\ast$'s in line.
\item We insert $n$ ``$($"'s just before the $s^{N}_{i}+1$-th $\ast$ from left in $nm$ $\ast$'s 
for $1\le i\le m$. 
\item We insert $n$ ``$)$"'s just before the $s^{E}_{i}+1$-th $\ast$ from right in $nm$ $\ast$'s 
for $1\le i\le m$.
\end{enumerate}
We denote by $(\alpha^{\ast})^{-1}$ the map from $\mathfrak{A}_{n}^{m}$ 
to $\mathfrak{A}^{\ast}_{n,m}$ defined as above.
\end{defn}

\begin{prop}
\label{prop:atopp}
Let $\mathfrak{a}$ be an $m$-tuple of Dyck paths in $\mathfrak{A}_{n}^{m}$.
Then, the parenthesis presentation $(\alpha^{\ast})^{-1}(\mathfrak{a})$
gives $\mathfrak{a}$ by use of Definition \ref{defn:alphaast}.
\end{prop}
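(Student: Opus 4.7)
The plan is to verify that $\alpha^{I}_{i}(\beta) = a_{i}$ for each $i \in [1,m]$, where $\beta := (\alpha^{\ast})^{-1}(\mathfrak{a})$, by direct analysis of the two defining insertion procedures.

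First, I would extract the immediate structural consequences of Definition \ref{defn:atopp}. By construction, the $j$-th ``$($'' of $\beta$ has exactly $s^{N}_{j}$ stars to its left, and the $j$-th ``$)$'' from the right has exactly $s^{E}_{j}$ stars to its right. Monotonicity of $s^{N}$ and $s^{E}$, which follows from each $a_{i}$ being a Dyck path, makes both insertion sequences unambiguous and compatible. Balancedness of $\beta$ follows by summing the Dyck inequalities for the individual $a_{i}$'s.

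Next I would establish that every matching pair of parens in $\beta$ encloses exactly $m$ stars, so that the right-to-left labeling required by Definition \ref{defn:alphaast} is well-defined. This is where admissibility of $\mathfrak{a}$ (Definition \ref{defn:adm}) enters decisively. I would prove it by induction on $n$: at the inductive step, I isolate an outermost matching pair of $\beta$ and use admissibility to ensure both that this pair encloses exactly $m$ stars and that the remaining word corresponds to a smaller admissible tuple. The base case $n=1$ is immediate since $\beta$ is then $(\ast^{m})$.

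Given this structure, applying $\alpha^{I}_{i}$ retains all $($'s of $\beta$ and converts precisely the label-$i$ star of each matching pair into a $)$. Hence the $j$-th $($ of $\alpha^{I}_{i}(\beta)$ is the $j$-th $($ of $\beta$, and the number of $)$'s to its left in $\alpha^{I}_{i}(\beta)$ equals the number of label-$i$ stars to its left in $\beta$. The same induction as above identifies this count with $n^{i}_{j}$ and, symmetrically, the number of label-$i$ stars to the right of the $j$-th $)$ from the right of $\beta$ with $e^{i}_{j}$. These two counts uniquely determine $\alpha^{I}_{i}(\beta)$ and identify it with $a_{i}$.

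The main obstacle is the structural step that each matching pair of $\beta$ encloses exactly $m$ stars. Without admissibility this can fail, consistent with the non-injectivity phenomenon behind Proposition \ref{prop:312avoid}. Once this structural fact is in place, the remainder of the argument is a bookkeeping exercise that directly matches the insertion rules of Definitions \ref{defn:alphaast} and \ref{defn:atopp}.
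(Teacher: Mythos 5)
There is a genuine gap. Your two load-bearing steps --- (a) every matching pair of $w:=(\alpha^{\ast})^{-1}(\mathfrak{a})$ encloses exactly $m$ stars, and (b) the number of label-$i$ stars to the left of the $j$-th ``$($'' of $w$ equals $n^{i}_{j}$ --- are both asserted to follow ``by induction, using admissibility,'' but you never engage what admissibility actually says. (Two side remarks: your symbol $\beta$ collides with the paper's map $\beta:\mathfrak{A'}_{n}^{m}\rightarrow\mathfrak{U}_{n}^{m}$, which I need below; and claim (a) is false as literally stated once $n\ge 2$, since a pair also encloses the stars of all nested pairs --- you mean ``directly encloses.'') Admissibility (Definition \ref{defn:adm}) is the identity $\mathfrak{a}=\gamma\circ\beta(\mathfrak{a})$, where $\gamma$ is defined through second primitive subsequences of $\beta(\mathfrak{a})=\sum_{j}\mathfrak{u}(a_{j})$. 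Claim (b) is precisely the statement $\mathfrak{u}(a_{i})=\gamma(\beta(\mathfrak{a}))_{i}$ translated into star counts, i.e.\ it \emph{is} the admissibility identity combined with the fact that the type-$I$ labelling realizes $\gamma$; writing ``the same induction identifies this count with $n^{i}_{j}$'' restates the proposition rather than proving it. Similarly, for your induction on $n$ to run you must show that deleting an outermost matching pair of $w$ produces the word of a smaller \emph{admissible} tuple; admissibility is a global condition on step sequences, and its stability under this surgery is exactly the kind of fact that needs an argument. The paper's examples $\mathfrak{a}'_{1},\mathfrak{a}'_{2}$ show that without admissibility the construction of Definition \ref{defn:atopp} need not even produce a well-formed word, so these steps cannot be dismissed as bookkeeping.

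For contrast, the paper's proof routes through the rational Dyck path $P$ with $\mathfrak{u}_{P}=\beta(\mathfrak{a})$: step (2) of Definition \ref{defn:atopp} places the ``$($''s exactly where $\alpha^{\ast}(\pi(P))$ has them, step (3) does the same for the ``$)$''s, so $w=\alpha^{\ast}(\pi(P))$; then the proposition preceding Definition \ref{defn:adm} (the step sequence of $\alpha^{I}_{i}(\pi)$ is $\mathfrak{u'}_{i}$) gives $\alpha^{I}(\pi(P))=\gamma(\mathfrak{u}_{P})=\gamma(\beta(\mathfrak{a}))=\mathfrak{a}$. If you want to keep a direct induction on the word, you would need to derive (a) and (b) from the second-primitive-subsequence description of $\gamma$, which amounts to reproving that intermediate proposition inside your induction.
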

%%%%%%%%%%
\begin{proof}
Since $\mathfrak{a}\in\mathfrak{A}_{n}^{m}$, the integer sequence 
$\mathfrak{u}=\beta(\mathfrak{a})$ is a step sequence for some 
Dyck path $P$ by Proposition \ref{prop:utoa1}.
Since each word $a_{i}$, $1\le i\le m$, in $\mathfrak{a}$ is a Dyck 
word, the sequence $s^{N}$ constructed from $\mathfrak{a}$ is 
the sum of the step sequences $\mathfrak{u}(a_{i})$.
Recall that we construct a parenthesis presentation from 
$\pi(P)\in\mathfrak{S}_{n}^{(m)}$ for a Dyck path $P$ 
in Definition \ref{defn:alphaast}.
By the correspondence between $\pi(P)$ and a $(b+1)$-ary 
tree, the position of ``$($" in $\alpha^{\ast}(\pi)$ 
is given by the step sequence $\mathfrak{u}(P)$.
This process is realized by the step (2) in Definition \ref{defn:atopp}.

Similarly, the position of ``$)$" in a parenthesis presentation is 
given by the step (3) in Definition \ref{defn:atopp}.

Finally, we have $\gamma(\mathfrak{u})=\mathfrak{a}$ since 
$\mathfrak{a}\in\mathfrak{A}_{n}^{m}$, which implies 
the parenthesis presentation $(\alpha^{\ast})^{-1}(\mathfrak{a})$ 
gives $\mathfrak{a}$ by Definition \ref{defn:alphaast}.
This completes the proof.
\end{proof}

\begin{example}
Let $\mathfrak{a}:=(\alpha^{I}_1,\alpha^{I}_2)$ be two Dyck words:
\begin{align*}
\alpha^{I}_1=((())()), \quad \alpha^{I}_2=()()()().
\end{align*}
We have $s^{N}=(0,1,2,5)$ and $s^{E}=(0,1,3,4)$. Therefore, the parenthesis presentation with 
$\ast$ corresponding to $\mathfrak{a}$ is given by 
$(*(*(**)*)(**)*)$.
It is easy to check that this parenthesis presentation gives $\mathfrak{a}$ by $\alpha^{\ast}$.
\end{example}

\begin{theorem}
We have a natural bijection between $\mathfrak{S}_{n}^{(m)}(312)$ and 
$\mathfrak{A}_{n}^{m}$.
\end{theorem}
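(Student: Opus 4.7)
The plan is to exhibit the bijection as $\alpha^I$ restricted to $\mathfrak{S}_n^{(m)}(312)$, with inverse given by first applying $\beta$ to get a step sequence and then applying $\zeta$ to obtain a Stirling permutation. Concretely, I would set
\begin{align*}
\Phi:\mathfrak{S}_n^{(m)}(312)\longrightarrow \mathfrak{A}_n^m,\qquad \Phi(\pi):=\alpha^I(\pi),
\end{align*}
and define the candidate inverse $\Psi:\mathfrak{A}_n^m\to\mathfrak{S}_n^{(m)}(312)$ by $\Psi(\mathfrak{a}):=\zeta(\beta(\mathfrak{a}))$, where we use that $\beta(\mathfrak{a})$ is a valid step sequence in $\mathfrak{U}_n^m$ by Proposition \ref{prop:utoa1}, and that $\zeta$ of any such step sequence lands in $\mathfrak{S}_n^{(m)}(312)$ by Lemma \ref{lemma:Dyck312}.

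First I would verify that $\Phi$ really takes values in $\mathfrak{A}_n^m$, i.e.\ that $\alpha^I(\pi)$ is admissible whenever $\pi$ is $312$-avoiding. Since $\pi=\zeta(\mathfrak{u}_P)$ for the unique Dyck path $P$ whose step sequence is $\beta(\alpha^I(\pi))$ (this identification follows from the earlier observation that the $i$-th coordinate of $\beta\circ\alpha^I$ records the left-most occurrences of label $i$, matching the definition of $\zeta$), the Proposition identifying the step sequence of $\alpha^I_i(\pi)$ with $\mathfrak{u}'_i=\gamma(\mathfrak{u}_P)_i$ yields $\gamma(\beta(\alpha^I(\pi)))=\alpha^I(\pi)$, which is exactly admissibility.

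Next I would prove $\Psi\circ\Phi=\mathrm{id}$ and $\Phi\circ\Psi=\mathrm{id}$. For $\Psi\circ\Phi$: starting from $\pi\in\mathfrak{S}_n^{(m)}(312)$, the computation above gives $\beta(\alpha^I(\pi))=\mathfrak{u}_P$ where $P$ is the path with $\zeta(\mathfrak{u}_P)=\pi$, so $\Psi(\Phi(\pi))=\zeta(\mathfrak{u}_P)=\pi$. For $\Phi\circ\Psi$: given $\mathfrak{a}\in\mathfrak{A}_n^m$, set $\mathfrak{u}=\beta(\mathfrak{a})$ and $\pi=\zeta(\mathfrak{u})$; then $\alpha^I(\pi)=\gamma(\mathfrak{u})=\gamma(\beta(\mathfrak{a}))=\mathfrak{a}$, where the last equality is exactly the admissibility hypothesis on $\mathfrak{a}$.

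The main obstacle, and the point where I would spend the most care, is the identification $\beta\circ\alpha^I=\mathrm{id}_{\mathfrak{U}_n^m}\circ\,\zeta^{-1}$ on $\mathfrak{S}_n^{(m)}(312)$ — that is, confirming that summing the step sequences of the type~$I$ components $\alpha^I_1(\pi),\ldots,\alpha^I_m(\pi)$ returns the step sequence $\mathfrak{u}_P$ of the originating Dyck path. Once this is established, injectivity of $\Phi$ is given by Proposition \ref{prop:312avoid}, well-definedness by the admissibility verification, and surjectivity by the explicit inverse $\Psi$, so all three pieces assemble into the bijection.
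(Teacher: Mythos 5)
Your overall strategy is the same as the paper's: the forward map is $\alpha^{I}$, admissibility is used precisely as the condition $\gamma\circ\beta=\mathrm{id}$ on the image, and injectivity comes from Proposition \ref{prop:312avoid}. The difference is organizational: the paper factors the bijection through the intermediate set $\mathfrak{A}^{\ast}_{n,m}$ of starred parenthesis presentations (via the $(m+1)$-ary tree correspondence on one side and Proposition \ref{prop:atopp} on the other), whereas you verify the two compositions $\Psi\circ\Phi$ and $\Phi\circ\Psi$ directly using the identities $\alpha^{I}\circ\zeta=\gamma$ and $\beta\circ\gamma=\mathrm{id}$. Your packaging is cleaner, and you correctly isolate $\beta\circ\alpha^{I}=\zeta^{-1}$ as the crux; note that this is essentially the content of Proposition \ref{prop:atopp} (the sequence $s^{N}$ built from $\mathfrak{a}$ is $\sum_i\mathfrak{u}(a_i)$ and recovers the positions of the left parentheses, i.e.\ $\mathfrak{u}_P$), so you should cite that rather than leave it as a promissory note.

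There is, however, one genuine gap: you repeatedly invoke ``the unique Dyck path $P$ with $\zeta(\mathfrak{u}_P)=\pi$'' for an arbitrary $\pi\in\mathfrak{S}_n^{(m)}(312)$, i.e.\ you assume $\zeta:\mathfrak{U}_n^{m}\to\mathfrak{S}_n^{(m)}(312)$ is surjective. Lemma \ref{lemma:Dyck312} only gives the containment $\zeta(\mathfrak{U}_n^{m})\subseteq\mathfrak{S}_n^{(m)}(312)$, and without surjectivity both your well-definedness argument (that $\alpha^{I}(\pi)$ is admissible) and the identity $\Psi\circ\Phi=\mathrm{id}$ collapse, since they are only established for $\pi$ in the image of $\zeta$. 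The fix is not hard but must be stated: given $\pi\in\mathfrak{S}_n^{(m)}(312)$, remove the contiguous block $n^{m}$ to get a $312$-avoiding $\pi'$ of size $n-1$; by induction $\pi'=\zeta(u_1,\dots,u_{n-1})$, and the insertion position $u_n$ of the block satisfies $u_n\ge u_{n-1}$, because otherwise some letter $c<n-1$ would sit between the block $n^{m}$ and the leftmost $n-1$, producing the pattern $(n,c,n-1)$ of type $312$. This is exactly the step the paper's detour through $(m+1)$-ary trees and $\mathfrak{A}^{\ast}_{n,m}$ is supplying, so omitting it means your argument proves strictly less than the theorem.
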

\begin{proof}
Suppose that $\alpha^{\ast}\in\mathfrak{A}_{n}^{m}$ be a parenthesis presentation 
in $\mathfrak{A}_{n,m}^{\ast}$.
One can construct a $(m+1)$-ary tree by reversing the algorithm just above 
Definition \ref{defn:alphaast}.
More precisely, we enumerate left parentheses from left to right by 
$1,2\ldots,n$ in $\alpha^{\ast}$.
Since $\alpha^{\ast}$ is balanced, every right parenthesis can be 
enumerated by an integer corresponding to a left parenthesis.
By reading ``$($"'s $\ast$'s and ``$)$"'s from left to right, 
we obtain a $(m+1)$-ary tree $T$ with labels in $[1,n]$.
Then, we obtain a $m$-Stirling permutation $\pi(T)$ from the tree $T$.
By construction of the tree $T$, it is obvious that $\pi(T)$ avoids 
the pattern $312$.
Further, given $\alpha^{\ast}$, we have a unique $m$-Stirling permutation
avoiding pattern $312$.
Conversely, once a $m$-Stirling permutation given, we have a unique 
parenthesis presentation in $\mathfrak{A}_{n,m}^{\ast}$.
Therefore, we have a bijection between $\mathfrak{S}_{n}^{(m)}(312)$
and $\mathfrak{A}_{n,m}^{\ast}$.

Given $\mathfrak{a}\in\mathfrak{A'}_{n}^{m}$, we have a unique 
parenthesis presentation $\alpha^{I}\in\mathfrak{A}_{n,m}^{\ast}$.
However, the reverse is not true in general. 
The admissibility of $\mathfrak{a}$ gives a unique $\mathfrak{a}$
from $\alpha^{I}$  by Proposition \ref{prop:atopp}.
From these observations, we have a bijection between 
$\mathfrak{A}_{n,m}^{\ast}$ and $\mathfrak{A}_{n}^{m}$.
By the observations in the previous paragraph,
we have a natural bijection between $\mathfrak{S}_{n}^{(m)}(312)$
and $\mathfrak{A}_{n}^{m}$.
\end{proof}

\subsection{Rotation on a Dyck path}
\label{sec:rotDp}
We define a rotation on a Dyck path in $\mathfrak{D}_{n}^{(1,1)}$.
This rotation is a generalization of the rotation of a Dyck path 
defined in Section \ref{sec:rot}

Let $q:=(q_1,q_2,\ldots,q_{2n})\in\{N,E\}^{2n}$ be a Dyck path of size $n$.
\begin{defn}
\label{defn:rotDp}
Suppose $q_{i}=E$ and $q_{i+1}=N$ for some $i\in[2,2n]$. 
The {\it rotation} of size $m$ at position $i$ defines a Dyck path $q':=(q'_1,\ldots,q'_{2n})$
by 
\begin{align}
&q'_{j}=q_{j},\quad \forall j\notin [i,i+2m], \\
&q'_{j}=q_{j+1}, \quad \forall j\in[i,i+2m-1], \\
&q'_{i+2m}=q_{i}.
\end{align}
\end{defn}

\begin{defn}
\label{defn:adrotDp}
Let $q:=(q_1,q_2,\ldots,q_{2n})\in\{N,E\}^{2n}$ be a Dyck path.
We say that the rotation of size $m$ at $q_{i}$ is {\it admissible} if 
the partial path $q'=(q_{i+1},\ldots,q_{i+2m})$ is a Dyck word.
We say that a Dyck word is irreducible if it can not be expressed 
as a concatenation of two Dyck words.
Similarly, if $q'$ is irreducible, we say the rotation is irreducible.
\end{defn}

Definition \ref{defn:rotDp} is equivalent to the definition of the rotation on a 
Dyck path given in Section \ref{sec:rot} when the rotation is irreducible.

\subsection{\texorpdfstring{Rotation order on an $m$-tuple of Dyck words}
{Rotation order on an m-tuple of Dyck words}}
\label{sec:rotmDw}
Let $\mathfrak{a}:=(a_1,\ldots,a_m)$ be an $m$-tuple of Dyck words in $\mathfrak{A}_{n}^{m}$, 
and $a_{i}=(a_{i,1},\ldots,a_{i,2n})\in\{N,E\}^{2n}$ for $1\le i\le m$ be Dyck words.

Let $\alpha^{\ast}(P)$ be a parenthesis presentation of $P$, and $\mathfrak{a}$ 
be its $m$-tuple of Dyck words. 
Let $\epsilon(i)$ be the $i$-th left parenthesis ``$($" in $\alpha^{\ast}(P)$ from left.
Recall we enumerate $\ast$'s by integers in $[1,m]$ in 
Definition \ref{defn:alphaast}.

Recall that we construct $\alpha^{\ast}(P)$ from a $(m+1)$-ary tree. 
We say that an $\ast$ belongs to the left parenthesis $\epsilon(i)$ if 
the $\ast$ corresponds to the region between edges labeled by $i$ in 
the $(m+1)$-ary tree.
Let $n_{1}$ be the position of the integer $1$ 
belonging to the left parenthesis $\epsilon(i)$.
We denote by $n'$ the number of left parentheses between $\epsilon(i)$ and $n_1$, which
includes $\epsilon(i)$ as well.

Suppose that we have an $\ast$ left next to $\epsilon(i)$ in $\alpha^{\ast}(P)$.
Then, we have $n'$ left parentheses between $\epsilon(i)$ and $n_1$ and 
$n'$ right parentheses corresponding to these $n'$ left parentheses. 
We move these left and right parentheses left by one step 
in $\alpha^{\ast}(P)$ as follows.
First, we delete the remaining $n-n'$ balanced parentheses from $\alpha^{\ast}(P)$.
Secondly, if a parenthesis $p$ is right to the $x$-th $\ast$, then we move 
$p$ right to the $x-1$-th $\ast$.
Thirdly, we add $n-n'$ balanced parentheses such that the positions of them 
are the same as before the deletion.
For example, if we move the right-most ``$)$" in $**(***))$ left by one step, 
we obtain a sequence $*)*(***)$ since $(***)$ is balanced.

\begin{defn}
Let $\epsilon(i)$ and $n_1$ be defined as above.
We call the operation on $\alpha^{\ast}(P)$ characterized by $\epsilon(i)$ and $n_1$ 
a rotation on the parenthesis presentation at $i$.
\end{defn}

\begin{prop}
\label{prop:rotonpp}
Let $\alpha^{\ast}(P')$ be a parenthesis presentation obtained 
by a rotation on $\alpha^{\ast}(P)$ at $i$.
The rotation on the parenthesis presentation coincides with the rotation 
on the Dyck path $P$.
In other words, $P'$ is the Dyck path obtained from $P$ by a rotation at position $i$.
\end{prop}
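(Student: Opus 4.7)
The plan is to translate the rotation on the parenthesis presentation into an explicit modification of the step sequence $\mathfrak{u}_P$, then match it against Eqn.~(\ref{eqn:defrot}) specialized to $(a,b)=(1,m)$.

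First, I would record the dictionary between parentheses and step sequence. By the constructions in Sections~\ref{sec:multip} and~\ref{sec:pmDw}, the position of the left parenthesis $\epsilon(j)$ in $\alpha^{\ast}(P)$, measured by the number of $\ast$'s preceding it, equals the $j$-th entry $u_j$ of $\mathfrak{u}_P$; this is inherited from Definition~\ref{defn:zeta} via the walk-around reading of the $(m+1)$-ary tree, where inserting $j^m$ at position $u_j$ corresponds to attaching the subtree rooted at the node of label $j$ after $u_j$ many $\ast$'s. Consequently, moving $\epsilon(j)$ left across one $\ast$ decreases $u_j$ by one and leaves every other entry of $\mathfrak{u}_P$ fixed, while moving the matching right parenthesis produces no net change in any $u_{j'}$.

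Second, I would identify which left parentheses the rotation actually displaces. The $\ast$ of label $1$ belonging to $\epsilon(i)$ is the right-most $\ast$ attached at the root of the subtree $T_i$, so the $n'$ left parentheses $\epsilon(i), \epsilon(i+1), \ldots, \epsilon(k)$ lying between $\epsilon(i)$ and this $\ast$ are precisely the labels whose internal nodes sit in the portion of $T_i$ to the left of its right-most root-gap. The key claim is that the index set $\{i, i+1, \ldots, k\}$ so obtained coincides with the primitive subsequence of $\mathfrak{u}_P$ at position $i$. Membership in this portion of $T_i$ translates, via the step-sequence dictionary, to $u_j - u_i < m(j-i)$ for $j \in [i+1,k]$, while the first label escaping it satisfies $u_{k+1} - u_i \ge m(k+1-i)$; this is exactly~(\ref{eqn:condrot}) with $(a,b) = (1,m)$.

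Third, shifting these $n'$ left parentheses (together with their matching right parentheses) one $\ast$ to the left decreases $u_j$ by one for every $j \in [i,k]$ and leaves the remaining entries fixed. The resulting step sequence is therefore $(u_1,\ldots,u_{i-1}, u_i-1, \ldots, u_k-1, u_{k+1}, \ldots, u_n)$, which matches Eqn.~(\ref{eqn:defrot}); hence $P'$ is the rotation of $P$ at position $i$. The main obstacle is the second step: a careful argument from the walk-around construction of $\alpha^{\ast}(P)$ that the left parentheses encountered between $\epsilon(i)$ and the distinguished $\ast$ are exactly those indexed by the primitive subsequence, and that the ``delete-shift-re-insert'' procedure on the balanced remainder does not perturb any other entry of $\mathfrak{u}_P$. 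Once this dictionary is firmly in place, the rest is a direct comparison.
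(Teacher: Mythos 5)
Your proposal is correct and follows essentially the same route as the paper: both translate the displacement of the $n'$ left parentheses into the decrement of the primitive subsequence of $\mathfrak{u}_{P}$ at position $i$, using the fact that the number of $\ast$'s preceding $\epsilon(j)$ equals $u_j$. The only cosmetic difference is that you verify condition (\ref{eqn:condrot}) directly by counting $\ast$'s between $\epsilon(i)$ and the distinguished $\ast$, whereas the paper phrases the same count as an equality of horizontal distances and invokes Proposition \ref{prop:rot2}.
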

%%%%%%%%%%
\begin{proof}
We show that the rotation on $\alpha^{\ast}(P)$ gives the primitive 
subsequence of the step sequence $\mathfrak{u}_{P}:=(u_1,\ldots,u_{n})$ at position $i$.
The condition that we have an $\ast$ left to $\epsilon(i)$ implies 
$u_{i-1}<u_{i}$ in the step sequence.

The parenthesis presentation $\alpha^{\ast}(P)$ gives a rational 
Dyck path in $\mathfrak{D}_{n}^{(1,m)}$ by deleting all right 
parentheses and by replacing ``$($" and $\ast$ by $N$ and $E$ respectively. 
From Proposition \ref{prop:rot2}, it is enough to show that 
we have the same horizontal distance at $\epsilon(i)$ and $n_1$.
However, this is obvious since $n_1$ is labeled by $1$, we have 
$n'$ left parentheses and $mn'$ $\ast$'s. 

To move $n'$ left parentheses and $n'$ right parentheses left by one step
corresponds to Eqn. (\ref{eqn:defrot}) for the primitive subsequence at 
position $i$.

From these observations, the rotation on a parenthesis presentation is 
equivalent to the rotation on the Dyck path $P$.
\end{proof}

\begin{example}
We consider a Dyck path $P\in\mathfrak{D}_{4}^{(1,3)}$ with the step 
sequence $(0,1,2,4)$.
The parenthesis presentation of $P$ is $\alpha^{\ast}:=(*(*(**(***)*)**)**)$.
When $i=2$, we have three left parentheses ``$($"'s after the second 
left parenthesis and before $n_1$.
We move three left parentheses and three right parentheses in $\alpha^{\ast}$.
Then, this operation results in $((*(**(***)*)**)***)$. 
This new parenthesis presentation corresponds to a Dyck path 
with the step sequence $(0,0,1,3)$.
\end{example}

Let $\mathfrak{a}$ be the $m$-tuple of Dyck words corresponding to a rational 
Dyck path $P\in\mathfrak{D}_{n}^{(1,m)}$ with step sequence $\mathfrak{u}_{P}$.
Let $l_{\ast}$ be the label of the $\ast$ left to $\epsilon(i)$, 
and $r$ be the number of $\ast$'s with label $l_{\ast}$ left to $\epsilon(i)$.
\begin{defn}
Let $n'$ and $l_{\ast}$ defined as above.
We perform a rotation of size $n'$ at position $r$ on $a_{l_{\ast}}$,  
and the other $a_{p}$'s with $p\neq l_{\ast}$ remain the same.
We call this rotation a rotation of $\mathfrak{a}$.  
\end{defn}

\begin{prop}
\label{prop:rotonmDw}
The rotation on $\mathfrak{a}$ is admissible.
\end{prop}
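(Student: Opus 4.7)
The plan is to verify Definition \ref{defn:adrotDp} directly: the length-$2n'$ subword of $a_{l_\ast}$ immediately following the right parenthesis being rotated is itself a Dyck word. The key is to read this subword off of the $(m+1)$-ary tree picture of $\alpha^\ast(P)$, where both $\epsilon(i)$ and $\ast_{l_\ast}$ have transparent interpretations.

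Let $p$ be the position in $a_{l_\ast}$ of the $r$-th right parenthesis from the left. Since $a_{l_\ast}$ receives its right parentheses in the left-to-right order of the $\ast$'s labeled $l_\ast$ in $\alpha^\ast(P)$, this right parenthesis is exactly the $\ast_{l_\ast}$ sitting immediately to the left of $\epsilon(i)$ in $\alpha^\ast(P)$; hence $a_{l_\ast}[p+1]$ is $\epsilon(i)$. Viewing $\epsilon(i)$ as the opening parenthesis of the internal node indexed by $i$, the local layout in $\alpha^\ast(P)$ around this node has the form
\[
\epsilon(i),\ T_1',\ \ast_m,\ T_2',\ \ast_{m-1},\ \ldots,\ T_m',\ \ast_1,\ T_{m+1}',\ \text{(matching right paren)},
\]
where $T_1',\ldots,T_{m+1}'$ are the $m+1$ child subtrees. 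Since the $\ast$'s at this level are enumerated from right to left by $1,\ldots,m$, the distinguished $\ast_{l_\ast}$ sits between $T_s'$ and $T_{s+1}'$ with $s := m - l_\ast + 1$.

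Passing to $a_{l_\ast}$ deletes all right parentheses and all $\ast$'s except those labeled $l_\ast$, which become right parentheses. Writing $D(T_k')$ for the induced Dyck subword coming from $T_k'$ (balanced of length $2|T_k'|$, since each internal node contributes exactly one $\ast$ per label), the portion of $a_{l_\ast}$ starting at $\epsilon(i)$ is
\[
\epsilon(i),\ D(T_1'),\ \ldots,\ D(T_s'),\ \omega,\ D(T_{s+1}'),\ \ldots,\ D(T_{m+1}'),
\]
where $\omega$ denotes the single right parenthesis coming from $\ast_{l_\ast}$. Moreover, the left parentheses between $\epsilon(i)$ and $n_1$ in $\alpha^\ast(P)$ (including $\epsilon(i)$ itself) are exactly $\epsilon(i)$ together with all left parentheses inside $T_1',\ldots,T_m'$, giving $n' = 1 + \sum_{k=1}^m |T_k'|$. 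Hence the prefix of $a_{l_\ast}$ of length $2n'$ starting at position $p+1$ is
\[
\epsilon(i),\ D(T_1'),\ \ldots,\ D(T_s'),\ \omega,\ D(T_{s+1}'),\ \ldots,\ D(T_m'),
\]
stopping precisely before $D(T_{m+1}')$.

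The conclusion follows from a one-line balance check: $\epsilon(i)$ raises the running count to $1$, each balanced $D(T_k')$ for $k \le s$ leaves it at $1$, $\omega$ drops it to $0$, and each $D(T_k')$ for $s < k \le m$ keeps it at $0$; the count is always non-negative and returns to $0$ after exactly $2n'$ characters. So this partial path is a Dyck word, and Definition \ref{defn:adrotDp} gives the admissibility. I expect the only genuine obstacle to be notational bookkeeping---aligning ``position $r$'' in Definition \ref{defn:rotDp} with the $r$-th right parenthesis of $a_{l_\ast}$, tracking the right-to-left labelling convention for the $\ast$'s, and locating $\ast_{l_\ast}$ within the layout of $\epsilon(i)$'s internal node; the combinatorial content itself is just the running-count computation.
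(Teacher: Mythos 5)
Your proposal is correct and follows essentially the same route as the paper's proof: both reduce admissibility to the observation that between the $\ast$ immediately left of $\epsilon(i)$ and the $\ast$ labelled $1$ belonging to node $i$ there are exactly $n'$ left parentheses and $n'$ $\ast$'s labelled $l_{\ast}$, which together form a Dyck word of size $n'$ in $a_{l_{\ast}}$. Your running-count verification via the child subtrees $T_1',\ldots,T_{m+1}'$ simply makes explicit what the paper asserts with ``by construction of an $m$-tuple of Dyck words.''
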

\begin{proof}
We move $n'$ left parentheses and $n'$ right parentheses 
left by one step.
Note that the labels on $\ast$'s which belong to $n'$ left 
parentheses are not changed by the rotation.
However, the $\ast$ with label $l_{\ast}$ is moved to
right by $mn'$ step in $\alpha^{\ast}$. 
In terms of a Dyck word $a_{l_{\ast}}$, the $r$-th right parenthesis 
is moved to right by $2n'$ step.
Since the label of $n_1$ is one, we have $n'$ left parentheses 
and $n'$ $\ast$'s with label $l_{\ast}$. 
By construction of an $m$-tuple of Dyck words, we have a Dyck word of size 
$n'$ after the $r$-th right parenthesis in $a_{l_{\ast}}$.
From these, the rotation on $\mathfrak{a}$ is admissible.
\end{proof}

\begin{remark}
The rotation on $\mathfrak{a}$ may not be irreducible.
For example, we consider a Dyck path in $\mathfrak{D}_{4}^{(1,3)}$ 
with step sequence $(0,1,2,7)$. The triple Dyck words 
$\mathfrak{a}=(\alpha^{I}_1,\alpha^{I}_2,\alpha^{I}_3)$
are given by 
\begin{align*}
\alpha^{I}_1=((())()), \quad
\alpha^{I}_2=((())()), \quad
\alpha^{I}_3=()()()().
\end{align*}
The rotation at $i=2$ gives the rotation of size $2$ on $\alpha^{I}_3$. 
Thus, the Dyck word $\alpha^{I}_3$ is rotated as
\begin{align*}
\alpha^{I}_3=()()()()\rightarrow (()())().
\end{align*}
Note that this rotation is not irreducible.
\end{remark}

\begin{theorem}
The rotation on $\mathfrak{a}$ is equivalent to the rotation on $\mathfrak{u}_{P}$
defined in Definition \ref{sec:rot}.
\end{theorem}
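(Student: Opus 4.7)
The plan is to route the desired equivalence through the parenthesis presentation $\alpha^{\ast}(P)$, using the two intermediate correspondences that have already been set up. By Proposition \ref{prop:rotonpp}, a rotation at position $i$ on $\alpha^{\ast}(P)$ is the same cover relation as the rotation at position $i$ on $\mathfrak{u}_{P}$ in the sense of Section \ref{sec:rot}. So it suffices to transport the rotation on $\mathfrak{a}$ across the bijection $\mathfrak{A}_{n}^{m}\leftrightarrow\mathfrak{A}_{n,m}^{\ast}$ given by $\alpha^{\ast}$ and $(\alpha^{\ast})^{-1}$, and to verify that it coincides with the rotation at the same position $i$ on $\alpha^{\ast}(P)$.

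To do this I would unpack the construction $(\alpha^{\ast})^{-1}$ of Definition \ref{defn:atopp}: the $r$-th right parenthesis (counted from the right) in the component $a_{l_{\ast}}$ corresponds, under $\alpha^{\ast}$, to the $r$-th $\ast$ (from the right) carrying the label $l_{\ast}$ in the parenthesis presentation, and the left parentheses of the $a_i$ correspond to the left parentheses of $\alpha^{\ast}(P)$ via $s^{N}$. Fix a rotatable position $i$; by construction the $\ast$ immediately to the left of $\epsilon(i)$ carries some label $l_{\ast}$, and it sits at position $r$ among the $\ast$'s labelled $l_{\ast}$. The rotation on $\alpha^{\ast}(P)$ described in Section \ref{sec:rotmDw} shifts the $n'$ left parentheses from $\epsilon(i)$ to $n_{1}$ and their matched $n'$ right parentheses one step to the left, which is the same as sliding this single $\ast$ of label $l_{\ast}$ rightward past these $n'$ balanced pairs. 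In terms of the $\ast$'s, exactly $mn'$ characters are crossed; under the correspondence of Definition \ref{defn:atopp}, this translates to sliding the $r$-th right parenthesis of $a_{l_{\ast}}$ rightward over $2n'$ characters, and leaving every other $a_{p}$ untouched, which is precisely the rotation on $\mathfrak{a}$.

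The remaining ingredient is that the $2n'$ characters crossed in $a_{l_{\ast}}$ form an irreducible or reducible Dyck subword, so that Definition \ref{defn:rotDp} applies. This is the content of Proposition \ref{prop:rotonmDw}: the $n'$ left parentheses sitting between $\epsilon(i)$ and $n_{1}$ in $\alpha^{\ast}$ carry with them $n'$ matched right parentheses, and they contribute exactly $n'$ $\ast$'s of each label, so the image of this block in $a_{l_{\ast}}$ is a Dyck word of length $2n'$ right after the target position of the $r$-th right parenthesis. Combining the two bijections $\mathfrak{a}\longleftrightarrow \alpha^{\ast}(P)\longleftrightarrow \mathfrak{u}_{P}$ with this local verification then yields the equivalence of cover relations, and taking transitive and reflexive closures gives the equivalence of the two rotation orders.

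The main obstacle I expect is the bookkeeping in the middle step: one must be careful that shifting left only the $n'$ pairs strictly between $\epsilon(i)$ and $n_{1}$ does not accidentally move any right parenthesis of a component $a_{p}$ with $p\neq l_{\ast}$, and that the labelled $\ast$'s not sitting between $\epsilon(i)$ and $n_{1}$ keep both their absolute position and their label under the rotation. Both facts follow from the fact that the rotation on $\alpha^{\ast}$ preserves the labels of all $\ast$'s and only rearranges parentheses inside a bounded window, but this needs to be spelled out cleanly in order to conclude that the induced operation on the other components $a_{p}$ is the identity, which is the precise content of our definition of the rotation on $\mathfrak{a}$.
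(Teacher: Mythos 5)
Your proposal follows exactly the paper's route: reduce via Proposition \ref{prop:rotonpp} to comparing the rotation on $\mathfrak{a}$ with the rotation on the parenthesis presentation $\alpha^{\ast}(P)$, and then extract that comparison from the analysis underlying Proposition \ref{prop:rotonmDw}. The paper's own proof is just a two-line citation of those same two propositions, so your version is the same argument with the bookkeeping (label preservation of the $\ast$'s, the $2n'$-character shift in $a_{l_{\ast}}$, and the identity action on the other components) spelled out more explicitly.
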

\begin{proof}
From Proposition \ref{prop:rotonpp}, it is enough to show that 
the rotation on $\mathfrak{a}$ is equivalent to the rotation on 
a corresponding parenthesis presentation.
This is obvious form the proof of Proposition \ref{prop:rotonmDw}.
This completes the proof.
\end{proof}

\section{The strip-decompositions and parenthesis presentations}
\label{sec:sdpp}
\subsection{Parenthesis presentations and step sequences}
Let $P\in\mathfrak{D}_{n}^{(a,b)}$ be a rational Dyck path, and 
$\mathfrak{a}:=(a_1,\ldots,a_b)\in(\mathfrak{D}_{an}^{(1,1)})^{b}$ be a $b$-tuple of 
Dyck paths in the parenthesis presentation of type $I$ obtained from $P$ 
by Definition \ref{defn:alphaast}.

We construct a Young diagram from $\mathfrak{a}:=(a_1,\ldots,a_b)$ in the following way.
We replace ``$($" by $N$ and ``$)$" by $E$ in $a_{i}$ and denote 
by $b_{i}:=b_{i,1}\ldots b_{i,2an}\in\{N,E\}^{2an}$ the sequence of $N$'s and $E$'s 
obtained from $a_{i}$.
Then, we construct a sequence $A(P):=A_1A_2\ldots A_{2abn}$ 
of $N$'s and $E$'s of size $2abn$ by
\begin{align*}
A_{i}:=b_{b+1-p,q}
\end{align*}
where $p$ and $q$ be uniquely determined by $i=b(q-1)+p$ 
with  $1\le p\le b$ and $1\le q\le 2an$. 

Since $\mathfrak{a}$ is a set of Dyck paths, and by construction of $A$,
the sequence $\mathfrak{A}$ is also a Dyck path of size $2abn$.
Then, we define a Young diagram $Y(P)$ surrounded by two Dyck paths $A$ 
and $N^{abn}E^{abn}$.
Here, we replace $N$ (resp. $E$) by a vertical (resp. horizontal) line of length one.

Let $b=(i,j)$ be a box in the Young diagram in $Y(P)$.
The content $c(i,j)$ of the box $(i,j)$ is defined as one plus the sum 
of two statistics $\mathrm{arm}(b)$ and $\mathrm{leg}(b)$.
Here, $\mathrm{arm}(b)$ is the number of boxes in $Y(P)$ right to $b$, 
and $\mathrm{leg}(b)$ is the number of boxes in $Y(P)$ below $b$. 

We put a circle on the box $b$ such that $c(i,j)\equiv0\mod{b}$ in $Y(P)$.
We count the number of circled boxes in each column in $Y(P)$ and construct 
a sequence $v(P):=(v_1,v_2,\ldots,v_{l'})$ of non-negative integers.
We delete zeros from $v(P)$ and obtain a partition denoted by $v(P)$ by abuse
of notation.

\begin{prop}
\label{prop:YDu}
Let $v(P)$ be a partition obtained from $P$ as above.
Then, the step sequence $\mathfrak{u}_{P}$ is given by 
\begin{align*}
\mathfrak{u}_{P}=v(P)^{t}
\end{align*}
where $v(P)^{t}$ is the transposition of $v(P)$.
\end{prop}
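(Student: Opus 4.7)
The plan is to establish the column-by-column identity $v_j = \#\{k \in [1, an] : u_k \ge j\}$ for every column index $j$ of $Y(P)$; since $\mathfrak{u}_P$ is weakly increasing and $v(P)$ is a partition, this is equivalent to the partition equality $\mathfrak{u}_P = v(P)^{t}$. The first step is to translate the hook-length-mod-$b$ condition into a congruence along the interleaved boundary path $A$. From $\lambda_i = u_A(abn - i + 1)$ and $\lambda^{t}_{j} = abn - h_A(j)$ (read off directly from the step and height sequences of $A$), the hook length $1+\mathrm{arm}(i,j)+\mathrm{leg}(i,j)$ equals $u_A(r) - h_A(j) + r - j + 1 - abn$ with $r = abn - i + 1$. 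Since $b \mid abn$, reducing modulo $b$ gives $(u_A(r)+r) - (h_A(j)+j) \pmod b$, and recognizing these as the global positions of the $r$-th $N$-step and $j$-th $E$-step along $A$, the cell $(i,j)$ is circled iff $\mathrm{pos}^{A}_{N}(r) \equiv \mathrm{pos}^{A}_{E}(j) \pmod b$.

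Next I would exploit the interleaving rule $A_{b(q-1)+p} = b_{b+1-p,q}$, which shows that the residue of a step position modulo $b$ records which component $a_i$ the step comes from. Consequently the circled boxes in column $j$ correspond exactly to the $N$-characters of $a_{i_j}$ occurring strictly after its character realizing the $j$-th $E$-step of $A$, where $a_{i_j}$ is the component containing that $E$-step. Writing $e_j$ for the local $E$-index of this character within $a_{i_j}$ and using the height-sequence definition yields $v_j = an - h_{a_{i_j}}(e_j)$. As $j$ ranges over $[1, abn]$, the assignment $j \mapsto (i_j, e_j)$ is a bijection to $[1,b] \times [1,an]$, so the multiset $\{v_j\}$ is the disjoint union over $i$ of the column heights of the partitions $\mu^{(i)}$ attached to the individual $a_i$. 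The standard partition identity $\mu^{(i)}_k = \#\{e : (\mu^{(i)})^{t}_{e} \ge k\}$ then gives $v(P)^{t}_{k} = \sum_{i=1}^{b} \mu^{(i)}_k$.

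Finally, I would invoke the key structural relation of the type-$I$ parenthesis presentation, namely $\mathfrak{u}_P = \sum_{i=1}^{b} \mathfrak{u}(a_i)$ component-wise (the defining property of $\beta$ in Definition~\ref{defn:atou}, extended to the $(a,b)$ setting by the construction preceding this proposition). Combined with $\mu^{(i)}_k = u_{a_i}(an-k+1)$ from the standard Dyck-path/partition dictionary, this gives $v(P)^{t}_{k} = u_{an-k+1}$, which is precisely the $k$-th part of $\mathfrak{u}_P$ viewed as a partition. I expect the main obstacle to be the hook-length reduction together with the careful bookkeeping of the interleaving: verifying the bijection $j \mapsto (i_j, e_j)$ with $[1,b] \times [1,an]$ and ensuring that all reversal/transposition conventions (English vs.\ French diagram, step vs.\ height sequence, sequence-to-partition duality) compose consistently into the stated equality.
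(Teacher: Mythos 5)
Your argument is correct and follows essentially the same route as the paper's proof: the hook-length-mod-$b$ condition singles out exactly those $(E,N)$ pairs lying in a single component $a_i$ of the interleaved word $A$, the column counts therefore assemble the conjugates of the partitions attached to the individual $a_i$, and the identity $\mathfrak{u}_P=\sum_{i=1}^{b}\mathfrak{u}(a_i)$ coming from the type-$I$ presentation finishes the transposition. (One harmless slip: the hook length of the cell $(i,j)$ is exactly $u_A(r)+r-h_A(j)-j=\mathrm{pos}^{A}_{N}(r)-\mathrm{pos}^{A}_{E}(j)$ with $r=abn-i+1$, so the stray $+1-abn$ in your intermediate expression should not be there; the congruence you then state is nevertheless the correct one.)
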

%%%%%%

Before proceeding to the proof of Proposition \ref{prop:YDu}, 
we introduce the following lemma.

\begin{lemma}
\label{lemma:YDcont}
Let $Y$ be a Young diagram with $m$ rows and $n$ columns.
The content of the left-top box is $n+m-1$.
\end{lemma}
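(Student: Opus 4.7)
The plan is to simply unwind the definitions. The content of a box $b=(i,j)$ was defined as $c(i,j) = 1 + \mathrm{arm}(b) + \mathrm{leg}(b)$, where $\mathrm{arm}(b)$ counts the boxes strictly to the right of $b$ in its row, and $\mathrm{leg}(b)$ counts the boxes strictly below $b$ in its column. So the whole lemma reduces to computing these two statistics for the left-top box of a rectangular Young diagram with $m$ rows and $n$ columns.

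First I would identify the left-top box as $(1,1)$ in matrix coordinates. Since the diagram has $n$ columns, the number of boxes strictly to the right of $(1,1)$ in the top row is $n-1$, so $\mathrm{arm}((1,1)) = n-1$. Since the diagram has $m$ rows, the number of boxes strictly below $(1,1)$ in the first column is $m-1$, so $\mathrm{leg}((1,1)) = m-1$.

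Plugging into the definition yields $c(1,1) = 1 + (n-1) + (m-1) = n + m - 1$, which is the claim. There is no real obstacle here; the lemma is essentially a bookkeeping statement used to seed an inductive or geometric argument in the proof of Proposition \ref{prop:YDu}, and the only care needed is to make sure the reader interprets ``left-top box'' unambiguously as the corner cell of the (rectangular) diagram rather than the corner cell of a more general shape.
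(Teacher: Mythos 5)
Your proof is correct and is essentially identical to the paper's: both simply compute $\mathrm{arm}=n-1$ and $\mathrm{leg}=m-1$ for the top-left box and add one. The only difference is your closing caveat about rectangularity, which is actually unnecessary --- for any Young diagram with $m$ rows and $n$ columns the top-left box lies in the longest row (with $n$ boxes) and the longest column (with $m$ boxes), so the same count goes through.
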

%%%%%%%%%%%
\begin{proof}
Recall that the content of a box $b$ is one plus the sum 
of two statistics $\mathrm{arm}(b)$ and $\mathrm{leg}(b)$.
The left-top box has $n-1$ boxes right to it and $m-1$ boxes 
below it. Therefore, the content is $(n-1)+(m-1)+1=n+m-1$. 
\end{proof}

\begin{proof}[Proof of Proposition \ref{prop:YDu}]
The south-east boundary of $Y(P)$ consists of $N$'s and $E$'s, 
which come from $N$ and $E$ in $a_{i}$, $1\le i\le b$.
Suppose we have a pair of $E$ and $N$  such that $N$ is right to 
this $E$.
We denote by $e_{E}$ and $e_{N}$ the boundary edges in $Y(P)$ 
which corresponds to this pair of $E$ and $N$ in $a_{i}$. 
Let $b$ be the box in the same column as $e_{E}$ and 
in the same row as $e_{N}$. 
Since the boundary of $Y(P)$ is obtained from $\mathfrak{a}$ by 
reading the entries of $a_{i}$ in order, the number of edges 
between $e_{E}$ and $e_{N}$ in $Y(P)$ is $kb-1$, where $k$ 
is the distance between $E$ and $N$ in $a_{i}$.
By applying Lemma \ref{lemma:YDcont} to the box $b$, we have 
that the content of $b$ is $kb$, which is zero modulo $b$. 
All boxes with content zero modulo $b$ are obtained as above.

The step sequence of $P$ is given by the sum of the step sequences
of $a_{i}$'s.
This implies that a box in the Young diagram characterized 
by $a_{i}$ is one-to-one correspondence to a box in $Y(P)$.
By combining this with the argument in the previous paragraph,
the partition $v(P)$ gives the step sequence $\mathfrak{u}_{P}$ 
by taking its transposition. 
\end{proof}

\begin{example}
We consider the same rational Dyck path in $\mathfrak{D}_{2}^{(2,3)}$ as Figure \ref{fig:rDyck}.
Since $b=3$, we have triple Dyck paths $\mathfrak{a}:=(a_{1},a_{2},a_{3})$ by the parenthesis
presentation of type $I$, where
\begin{align}
a_{1}=(((()))),\quad a_2=((()())),\quad a_3=()()()().
\end{align}
Then, we have a Dyck path of size $12$ which is 
\begin{align}
A(P)=N^{3}EN^{5}E^{2}N^{3}E^{4}NE^{5}.
\end{align}
This gives a Young diagram $\lambda=(7,3^{3},1^{5})$ depicted as  
\begin{align*}
\tikzpic{-0.5}{[scale=0.6]
\draw(0,0)--(7,0)(0,-1)--(7,-1)(0,-2)--(3,-2)(0,-3)--(3,-3)(0,-4)--(3,-4)
     (0,-5)--(1,-5)(0,-6)--(1,-6)(0,-7)--(1,-7)(0,-8)--(1,-8)(0,-9)--(1,-9);
\draw(0,0)--(0,-9)(1,0)--(1,-9)(2,0)--(2,-4)(3,0)--(3,-4)(4,0)--(4,-1)(5,0)--(5,-1)
     (6,0)--(6,-1)(7,0)--(7,-1);
\draw (0.41,-0.5)node{\scalebox{0.9}{\circnum{15}}}(1.45,-0.5)node{\scalebox{0.9}{\circnum{9}}}(2.45,-0.5)node{$8$}
      (3.45,-0.5)node{$4$}(4.45,-0.5)node{\scalebox{0.9}{\circnum{3}}}(5.45,-0.5)node{$2$}(6.45,-0.5)node{$1$};
\draw(0.45,-1.5)node{$10$}(1.45,-1.5)node{$4$}(2.45,-1.5)node{\scalebox{0.9}{\circnum{3}}};
\draw(0.45,-2.5)node{\scalebox{0.9}{\circnum{9}}}(1.45,-2.5)node{\scalebox{0.9}{\circnum{3}}}(2.45,-2.5)node{$2$};
\draw(0.45,-3.5)node{$8$}(1.45,-3.5)node{$2$}(2.45,-3.5)node{$1$};
\draw(0.45,-4.5)node{$5$}(0.45,-5.5)node{$4$}(0.45,-6.5)node{\scalebox{0.9}{\circnum{3}}}
(0.45,-7.5)node{$2$}(0.45,-8.5)node{$1$};
}.
\end{align*}
We have $v(P)=(3,2,1,1)$, and we have a partition $(4,2,1)$ 
by the transposition of $v(P)$.
Note $(4,2,1)$ is equal to $\mathfrak{u}_{P}$ by ignoring $0$.
\end{example}

\subsection{\texorpdfstring{A $b$-tuple of Dyck words of type $II$}{a b-tuple of Dyck words of type II}}
\label{sec:mockDw}
Let $P$ be a rational Dyck path in $\mathfrak{D}_{n}^{(a,b)}$.
In Definition \ref{defn:alphaast}, we define the $m$-tuple of Dyck words from 
an $m$-Stirling permutation by the parenthesis presentation of type $I$.

By the map $\zeta$ in Definition \ref{defn:zeta}, we have a $b$-Stirling 
permutation from $P$.
We give an another different definition of a $b$-tuple of Dyck words.

\begin{defn}
Let $\pi\in\mathfrak{S}_{n}^{(b)}$ and $\alpha^{\ast}(\pi)$ be a parenthesis 
representation with $\ast$.
We enumerate $\ast$ from right to left by $1,\ldots,b,1,\dots,b,1,\ldots,b$ by ignoring 
the parentheses $($'s and $)$'s.
By the same processes (1) and (2) in Definition \ref{defn:alphaast}, 
we have a $b$-tuple of Dyck words.
We denote by $\alpha^{II}$ the map from $\mathfrak{S}_{n}^{(b)}$ 
to $\mathfrak{U'}_{n}^{b}$, and call it 
the parenthesis presentation of type $II$.
\end{defn}

Let $\mathfrak{u}_{P}$ be the step sequence associated with $P$.
Suppose that $P$ is expressed in terms of steps $N$'s and $E$'s 
by $P=N^{i_1}E^{j_1}\ldots N^{i_r}E^{j_r}$ for some integer $r$.
We define a rational Dyck path $\overline{P}:=(\overline{P}_1,\ldots,\overline{P}_{2abn})$
by 
\begin{align*}
\overline{P}:=N^{bi_1}E^{j_1}N^{bi_2}E^{j_2}\ldots N^{bi_r}E^{j_r}E^{i_{r+1}},
\end{align*}
where the non-negative integer $i_{r+1}$ is given by 
\begin{align*}
i_{r+1}:=b(i_1+i_2+\ldots+i_{r})-(j_1+j_2+\ldots+j_{r}).
\end{align*}
We construct a $b$-tuple of Dyck words 
$\overline{\mathfrak{p}_{i}}:=(\overline{p_{i,1}},\ldots,\overline{p_{i,an}})$ by
\begin{align}
\label{eqn:Ptomock}
\overline{p_{i,j}}:=\overline{P}_{b(j-1)+b+1-i}
\end{align}
for $1\le i\le b$ and $1\le j\le an$.

\begin{prop}
\label{prop:mock1}
Let $\alpha^{II}$ and $\{\overline{\mathfrak{p}_{i}}\}_{i=1}^{b}$ be constructed 
from $\pi\in\mathfrak{S}_{n}^{(a,b)}$ as above. 
Then we have 
\begin{align}
\alpha^{II}(\pi)
=(\overline{\mathfrak{p}_{1}},\overline{\mathfrak{p}_{2}},\ldots,\overline{\mathfrak{p}_{b}}).
\end{align}
\end{prop}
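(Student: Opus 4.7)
The strategy is to relate both sides to the parenthesis presentation $\alpha^{\ast}(\pi)$ via a single substitution. Define
\[
\varphi:\quad (\mapsto N^{b},\qquad \ast\mapsto E,\qquad )\mapsto \varepsilon.
\]
The plan is to first prove the substitution identity $\varphi(\alpha^{\ast}(\pi))=\overline{P}$, and then deduce the proposition by a residue-class analysis of the sampling positions in $\overline{P}$.

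For the substitution identity, I would argue by induction on $n$ using the recursive structure of the $(b+1)$-ary tree $T=\xi\circ\zeta(\mathfrak{u}_{P})$. The anti-clockwise walk defining $\alpha^{\ast}$ visits each internal node as $(\,W_{1}\ast W_{2}\ast\cdots\ast W_{b+1}\,)$, where $W_{k}$ is the walk of the $k$-th subtree; under $\varphi$ this becomes $N^{b}\varphi(W_{1})E\varphi(W_{2})E\cdots E\varphi(W_{b+1})$. Matching this recursion to the run-decomposition $\overline{P}=N^{bi_{1}}E^{j_{1}}\cdots N^{bi_{r}}E^{j_{r}}E^{i_{r+1}}$ uses that the insertion rule defining $\zeta$ aligns consecutive insertions $k^{b}$ at the same step value with a single N-run of $P$ (a nested leftmost chain in $T$) and introduces E-steps when the insertion position increases; the inductive step then collapses the walk around $T$ into the correct N-run/E-run pattern.

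Given the substitution identity, let $p_{m}$ denote the position in $\overline{P}$ corresponding to the $m$-th $\ast$ (from the left) of $\alpha^{\ast}(\pi)$. Between consecutive $\ast$'s only $($'s and $)$'s can occur, so $p_{m+1}-p_{m}=1+bk_{m}$ with $k_{m}$ the number of intervening $($'s; since $\alpha^{\ast}$ starts with $($ and no $)$ can precede the first $\ast$, $p_{1}\equiv 1\pmod{b}$, hence $p_{m}\equiv m\pmod{b}$ for all $m$. There are $abn$ $\ast$'s and $abn\equiv 0\pmod{b}$, so the right-to-left cyclic labeling assigns label $i$ precisely to the $\ast$'s with $m\equiv b+1-i\pmod{b}$, equivalently $p_{m}\equiv b+1-i\pmod{b}$. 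On the N-side, each $($ of $\alpha^{\ast}$ produces a block of $b$ consecutive positions of $\overline{P}$ covering every residue class mod $b$ exactly once, so $\overline{p_{i}}$ (the subsequence at positions $\equiv b+1-i\pmod{b}$) picks exactly one N per $($-block together with the E's whose $\ast$ carries label $i$. Reading left-to-right, this is precisely $\alpha^{II}_{i}(\pi)$ under the identification $(\leftrightarrow N$ and $)\leftrightarrow E$, which proves the proposition.

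The main obstacle is the substitution identity $\varphi(\alpha^{\ast}(\pi))=\overline{P}$: matching the recursive subtree decomposition of $T$ to the run decomposition of $P$ that defines $\overline{P}$ requires a careful correspondence between the insertion steps of $\zeta$ and the placements of $($'s and $)$'s in $\alpha^{\ast}$. The $312$-avoiding property of $\zeta(\mathfrak{u}_{P})$ (Lemma \ref{lemma:Dyck312}) is what makes this correspondence well-behaved, but extracting it cleanly is where the bulk of the technical work lies.
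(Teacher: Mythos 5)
Your argument is essentially the paper's own proof: both reduce the claim to the correspondence between $\alpha^{\ast}(\pi)$ and $\overline{P}$ under the substitution $(\,\mapsto N^{b}$, $\ast\mapsto E$, $)\mapsto\varepsilon$, followed by the same modulo-$b$ bookkeeping showing that the right-to-left cyclic labeling of the $\ast$'s matches the residue classes $b+1-i$ used to define $\overline{\mathfrak{p}_{i}}$ (the paper phrases this as ``the numbers of successive $N$'s between two $E$'s are zero modulo $b$''). Your residue analysis is correct and somewhat more explicit than the paper's, and the substitution identity you flag as the main obstacle is in fact asserted without proof in the paper (``an $\ast$ in $\alpha^{\ast}(\pi)$ corresponds to an $E$ in $\overline{P}$''), so your inductive sketch for it is, if anything, more than the paper supplies.
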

%%%%%%%%%%%%%
\begin{proof}
We enumerate all steps $N$'s and $E$'s in $\overline{P}$ by $(1,\ldots,b)^{2an}$ 
from right to left.
Then, the $i$-th step $E$ from right is enumerated by $i\mod{b}$ 
since the numbers of successive $N$'s between two $E$'s are 
zero modulo $b$ In a Dyck path $\overline{P}$. 

Recall we construct $\alpha^{II}(\pi)$ from a parenthesis 
presentation $\alpha^{\ast}(\pi)$.
We enumerate the $i$-th $\ast$ in $\alpha^{\ast}(\pi)$ by $i\mod{b}$ from 
right to left. 
Since an $\ast$ in $\alpha^{\ast}(\pi)$ corresponds to an $E$ in $\overline{P}$, 
this enumeration for $\ast$'s is compatible with the enumeration for $E$.
Therefore, Eqn. (\ref{eqn:Ptomock}) yields the $b$-tuple of Dyck words 
$\alpha^{II}(\pi)$. 
\end{proof}

\begin{defn}
\label{defn:mock}
We construct a $b$-tuple of step sequences $\{\mathfrak{u}(\mathfrak{p_i})\}_{i=1}^{b}$ from $P$
by the following processes. 
\begin{enumerate}
\item Set $i=n$ and $\mathfrak{u}:=\mathfrak{u}_{P}$.
\item We define 
\begin{align}
\label{eqn:mockb2}
\mathfrak{u}(\mathfrak{p_{i}})
:=\left\lceil\genfrac{}{}{0.8pt}{}{\mathfrak{u}}{i}\right\rceil.
\end{align}
\item Replace $\mathfrak{u}$ by $\mathfrak{u}-\mathfrak{u}(\mathfrak{p_{i}})$, 
and decrease $i$ by one. Go to (2). The algorithm stops when $i=1$.
\end{enumerate}
We denote by $\gamma^{II}$ the map from $\mathfrak{U}_{n}^{b}$ to $\mathfrak{A'}_{n}^{b}$.
\end{defn}

The following lemma is a direct consequence of Definition \ref{defn:mock}.
\begin{lemma}
\label{lemma:mocksum}
The step sequence $\mathfrak{u}_{P}$ is given by 
\begin{align*}
\mathfrak{u}_{P}=\sum_{i=1}^{b}\mathfrak{u}(\mathfrak{p}_{i}).
\end{align*}
\end{lemma}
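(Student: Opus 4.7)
The statement is essentially a telescoping identity hidden inside the recursive definition, so the plan is simply to unfold that recursion carefully and recognize that the remainder after the last step is forced to vanish.

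First I would label the intermediate values of $\mathfrak{u}$ that appear in the iteration of Definition \ref{defn:mock}. Setting $\mathfrak{u}^{(b)} := \mathfrak{u}_{P}$ as the initial value, step (3) prescribes
\begin{align*}
\mathfrak{u}^{(i-1)} := \mathfrak{u}^{(i)} - \mathfrak{u}(\mathfrak{p}_{i}), \qquad i=b,b-1,\ldots,1,
\end{align*}
where each $\mathfrak{u}(\mathfrak{p}_{i})$ is the componentwise ceiling of $\mathfrak{u}^{(i)}/i$ by Eqn. (\ref{eqn:mockb2}). Rearranging gives $\mathfrak{u}(\mathfrak{p}_{i}) = \mathfrak{u}^{(i)} - \mathfrak{u}^{(i-1)}$ for every $i$, which is the form suited to telescoping.

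Next I would pin down the terminal value $\mathfrak{u}^{(0)}$. When the algorithm reaches $i=1$, the definition specializes to $\mathfrak{u}(\mathfrak{p}_{1}) = \lceil \mathfrak{u}^{(1)}/1 \rceil = \mathfrak{u}^{(1)}$ componentwise, because each entry is already an integer. Consequently
\begin{align*}
\mathfrak{u}^{(0)} = \mathfrak{u}^{(1)} - \mathfrak{u}(\mathfrak{p}_{1}) = 0.
\end{align*}

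Finally I would sum the one-step differences and telescope:
\begin{align*}
\sum_{i=1}^{b} \mathfrak{u}(\mathfrak{p}_{i}) = \sum_{i=1}^{b}\bigl(\mathfrak{u}^{(i)}-\mathfrak{u}^{(i-1)}\bigr) = \mathfrak{u}^{(b)} - \mathfrak{u}^{(0)} = \mathfrak{u}_{P},
\end{align*}
which is exactly the asserted identity. There is no real obstacle here; the only point that requires any verification is that $\mathfrak{u}^{(0)}=0$, which is immediate from the behavior of the ceiling function at $i=1$. The rest is purely formal bookkeeping of the recursion.
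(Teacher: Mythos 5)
Your proof is correct and matches the paper, which gives no explicit argument and simply asserts the lemma is a direct consequence of Definition \ref{defn:mock}; your telescoping of the recursion, together with the observation that $\lceil\,\cdot\,/1\rceil$ is the identity on integer sequences so the final remainder vanishes, is precisely the bookkeeping that justifies that assertion.
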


\begin{prop}
\label{prop:alphgamma}
Let $\pi$ be a $b$-Stirling permutation for a rational Dyck path, 
and $\alpha^{II}$ and $\gamma^{II}$ be the maps defined as above.
Then, we have $\alpha^{II}(\pi)=\gamma^{II}(\pi)$.
\end{prop}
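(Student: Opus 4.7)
The plan is to show that both $\alpha^{II}(\pi)$ and $\gamma^{II}(\pi)$ produce the same $b$-tuple of Dyck paths by computing the $k$-th step-sequence entry of each and showing both equal $\lfloor (u_k+i-1)/b\rfloor$, where $\mathfrak{u}_P=(u_1,\ldots,u_{an})$ is the step sequence of $P$.

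For $\alpha^{II}$, I would use Proposition \ref{prop:mock1} to identify $\alpha^{II}(\pi)$ with the tuple $(\overline{\mathfrak{p}_1},\ldots,\overline{\mathfrak{p}_b})$ read off from the inflated path $\overline{P}$. The $k$-th $N$ of $P$ (preceded by $u_k$ $E$'s and $k-1$ $N$'s in $P$) becomes a block of $b$ successive $N$'s occupying positions $b(k-1)+u_k+1,\ldots,b(k-1)+u_k+b$ in $\overline{P}$, while each $E$ of $P$ remains a single $E$. Since $\overline{\mathfrak{p}_i}$ selects positions $\equiv b+1-i\pmod{b}$ of $\overline{P}$, exactly one of the $b$ inflated copies of each $N_k$ survives; solving the congruence $s\equiv 1-u_k-i\pmod{b}$ with $s\in[1,b]$ shows that the surviving copy sits at position $k+\lfloor(u_k+i-1)/b\rfloor$ in $\overline{\mathfrak{p}_i}$. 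Hence the $k$-th step-sequence entry of $\overline{\mathfrak{p}_i}$ equals
\[
\left\lfloor\frac{u_k+i-1}{b}\right\rfloor.
\]

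For $\gamma^{II}$, I would run a downward induction on $i$ from $b$ to $1$ to show the algorithm produces the same values. The base $i=b$ is the identity $\lceil u_k/b\rceil=\lfloor(u_k+b-1)/b\rfloor$. For the inductive step, write $u_k=qb+r$ with $r\in[0,b-1]$: a short computation shows that the current value
\[
u_k-\sum_{j=i+1}^{b}\left\lfloor\frac{u_k+j-1}{b}\right\rfloor
\]
equals $(q+1)i+r-b$ in the case $r\geq b-i$ and $qi$ in the case $r<b-i$; applying $\lceil\cdot/i\rceil$ gives $q+1$ and $q$ respectively, matching $\lfloor(u_k+i-1)/b\rfloor$ in each case. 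As a global consistency check, summing over $i=1,\ldots,b$ recovers $u_k$ by Hermite's identity $\sum_{i=0}^{b-1}\lfloor(u_k+i)/b\rfloor=u_k$, in agreement with Lemma \ref{lemma:mocksum}.

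The main obstacle is the bookkeeping for $\overline{\mathfrak{p}_i}$: pinning down, for each $k$, which of the $b$ inflated copies of $N_k$ lies in the $i$-th phase and converting its absolute position in $\overline{P}$ into a position inside $\overline{\mathfrak{p}_i}$. Once this identification is clean, the ceiling-floor computation for the induction on $\gamma^{II}$ becomes a routine residue case analysis.
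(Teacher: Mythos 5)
Your proposal is correct, and I verified both halves of your computation: the surviving copy of the $k$-th north step does land at position $k+\lfloor(u_k+i-1)/b\rfloor$ in $\overline{\mathfrak{p}_i}$ (so its step-sequence entry is $\lfloor(u_k+i-1)/b\rfloor$), and the downward induction on $i$ with $u_k=qb+r$ does reproduce the same value from the ceiling recursion (the residual is $qi$ for $r\le b-i$ and $qi+(r-b+i)$ with $r-b+i\in[1,i-1]$ for $r>b-i$, giving $q$ and $q+1$ after $\lceil\cdot/i\rceil$). The route differs from the paper's. Both arguments begin by invoking Proposition \ref{prop:mock1} to replace $\alpha^{II}(\pi)$ by the interleaved reading $(\overline{\mathfrak{p}_1},\ldots,\overline{\mathfrak{p}_b})$ of the inflated path $\overline{P}$, but from there the paper argues geometrically: it views $\overline{P}$ as bounding the Young diagram $(u_1^{b},\ldots,u_{an}^{b})$, labels east steps modulo $b$, and appeals to the content-modulo-$b$ mechanism of Proposition \ref{prop:YDu} to match boxes of $Y$ with boxes of the diagrams for the $\mathfrak{p}_i$, concluding rather tersely that Eqn.~(\ref{eqn:mockb2}) produces the same paths. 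You instead extract the explicit closed form $\lfloor(u_k+i-1)/b\rfloor$ for the $k$-th entry of the $i$-th path and verify that both constructions compute it. Your version is more elementary and considerably more checkable --- it pins down exactly where each inflated north step goes, and it yields Lemma \ref{lemma:mocksum} for free via $\sum_{i=0}^{b-1}\lfloor(u_k+i)/b\rfloor=u_k$ --- at the cost of some residue bookkeeping; the paper's version is shorter and ties the statement to the Young-diagram picture used elsewhere, but leaves the decisive identification ("it is clear that\dots") implicit. One small point worth making explicit in your write-up: the surviving copies of $N_1,\ldots,N_{an}$ occur in $\overline{\mathfrak{p}_i}$ in increasing order of $k$ (their absolute positions $b(k-1)+u_k+s_k$ are strictly increasing since $u_k$ is weakly increasing and $s_k\le b$), which is needed before you can read off the step sequence entrywise.
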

%%%%%%%%%%%
\begin{proof}
Let $\mathfrak{u}_{P}:=(u_1,\ldots,u_{an})$ be the step sequence of $P$.
From Proposition \ref{prop:mock1}, it is enough to prove 
$\overline{\mathfrak{p}_{i}}=\mathfrak{p}_{i}$ for $1\le i\le b$. 

The Young diagram characterized by $\overline{P}$ is 
$Y:=(u_1^{b},u_{2}^{b},\ldots,u_{an}^{b})$.
As in the proof of Proposition \ref{prop:mock1}, we enumerate 
the $i$-th $E$ from right in $\overline{P}$ by $i\mod{b}$.
As in Proposition \ref{prop:YDu}, if the content of a box in $Y$ 
is zero modulo $b$, the box corresponds to a box in the Young 
diagram for a Dyck path $\mathfrak{p}_{i}$ for some $i$.
From these two observations, it is clear that 
Eqn. (\ref{eqn:mockb2}) gives 
the same Dyck path as $\overline{\mathfrak{p}_{i}}$ for $1\le i\le b$.
\end{proof}

\subsection{From \texorpdfstring{$\alpha^{II}(\pi)$}{a II(pi)} to 
\texorpdfstring{$\alpha^{I}(\pi)$}{aI(pi)}}
In this subsection, we will show a relation between 
$\alpha^{I}(\pi)$ and $\alpha^{II}(\pi)$
by referring to the parenthesis presentation $\alpha^{\ast}(\pi)$.
Before considering a general case, we first consider $b=2$ case.

Let $\pi\in\mathfrak{D}_{n}^{(a,2)}$ and 
$\alpha^{\ast}(\pi)$ be a parenthesis representation with $\ast$.
To obtain $\alpha^{I}(\pi)$ and $\alpha^{II}(\pi)$, we enumerate 
the $\ast$'s by integers $1$ and $2$.
Let $s^{I}$ (resp. $s^{II}$) be a sequence of integers in $[1,2]$ obtained 
from $\alpha^{\ast}(\pi)$ by taking the labels of $\ast$'s to construct 
$\alpha^{I}(\pi)$ (resp. $\alpha^{II}(\pi)$).
Suppose that $s^{I}:=s^{I}_{1}\ldots s^{I}_{2an}\in\{1,2\}^{2an}$ 
and $s^{II}:=s^{II}_{1}\ldots s^{II}_{2an}\in\{1,2\}^{2an}$.
We say that $s^{II}_{i}$ and $s^{II}_{j}$ are a pair if 
$\{s^{II}_{i},s^{II}_{j}\}=\{1,2\}$ and the labels $s^{II}_{i}$ and $s^{II}_{j}$
are obtained from the same integer in $\pi$.
Let $p$ be an integer such that 
\begin{align}
\begin{aligned}
\label{eqn:conds}
&s^{II}_{i}=s^{I}_{i}, \quad \forall i\in[1,p-1], \\
&s^{II}_{p}\neq s^{I}_{p}.
\end{aligned}
\end{align}
Let $q$ be the integer such that $s^{II}_{p}$ and $s^{II}_{q}$ are a pair.
Suppose that $s^{II}_{p}=l$ and $s^{II}_{q}=3-l$ where $l$ is either $1$ or $2$.
We have a double Dyck paths $\alpha^{II}(\pi)=(\mathfrak{p}_1,\mathfrak{p}_2)$ 
expressed in terms of parentheses.
Let $x$ (resp. $y$) be an integer such that the $x$-th (resp. $y$-th) parenthesis 
in $\mathfrak{p}_{l}$ (resp. $\mathfrak{p}_{3-l}$) corresponds to $s^{II}_{p}$ 
(resp. $s^{II}_{q}$).
Note that we have $x\le y$ by construction of $\alpha^{\ast}(\pi)$.
We move the $x$-th parenthesis in $\mathfrak{p}_1$  
just before the $y$-th parenthesis in $\mathfrak{p}_1$ 
if $x<y$ and we do nothing if $x=y$.
Similarly, we move the $y$-th parenthesis in $\mathfrak{p}_2$ just after the 
$x$-th parenthesis in $\mathfrak{p}_2$ if $x<y$ and we do nothing otherwise.

We define new $s^{II}$ as the sequence obtained from $s^{II}$ by switching $s^{II}_{p}$
and $s^{II}_{q}$.
Then, we continue this process until we have $s^{II}=s^{I}$.

We denote by $\alpha''(\pi)$ the parenthesis presentation obtained from 
$\alpha^{II}(\pi)$ by the above procedures.

\begin{prop}
\label{prop:IfromII}
We have $\alpha^{I}(\pi)=\alpha''(\pi)$.
\end{prop}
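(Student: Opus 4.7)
The plan is to proceed by induction on the number of positions at which $s^{II}$ and $s^{I}$ disagree. The base case, when they coincide everywhere, is immediate: no swap is performed, and since the labelings of the $\ast$'s in $\alpha^{\ast}(\pi)$ are identical, both constructions produce the same $b$-tuple of Dyck paths, so $\alpha''(\pi) = \alpha^{II}(\pi) = \alpha^{I}(\pi)$.

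For the inductive step, let $p$ be the first disagreement and $q$ the position paired with $p$ as in (\ref{eqn:conds}). The first key observation is that since the $\ast$'s at positions $p$ and $q$ belong to the same block of $\alpha^{\ast}(\pi)$ (that is, they come from the same integer in $\pi$, and hence lie between a single pair of matching parentheses), swapping their labels in $s^{II}$ is a purely local modification that does not change the labels of $\ast$'s in other blocks. Moreover, it changes the label at position $p$ from $s^{II}_{p}$ to $s^{I}_{p}$, and by the first-disagreement choice, no new discrepancies are introduced to the left of $p$, so the total number of disagreements strictly decreases.

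The second key step is to verify that the parenthesis move performed on $(\mathfrak{p}_{l},\mathfrak{p}_{3-l})$ realizes, at the level of the $b$-tuple of Dyck paths, exactly the label swap inside $\alpha^{\ast}(\pi)$. After the swap, the $\ast$ at position $p$ should contribute a ``$)$'' to $\mathfrak{p}_{3-l}$ rather than to $\mathfrak{p}_{l}$, and conversely for the $\ast$ at position $q$. Since the characters of $\alpha^{\ast}(\pi)$ between positions $p$ and $q$ are unchanged and remain in the same relative order, the net effect on $\mathfrak{p}_{l}$ is that its $x$-th closing parenthesis relocates to just before the $y$-th, while in $\mathfrak{p}_{3-l}$ the $y$-th closing parenthesis relocates to just after the $x$-th. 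This matches the move defining $\alpha''$, so iterating this equivalence and invoking the inductive hypothesis on the relabeled configuration produces $\alpha^{I}(\pi)$.

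The main obstacle I anticipate is verifying that the intermediate words remain honest Dyck paths after each move, which is what licenses the induction. For this I would use three inputs. First, since $p$ is the \emph{first} disagreement, the prefixes of $\alpha^{I}(\pi)$ and $\alpha^{II}(\pi)$ determined by the portion of $\alpha^{\ast}(\pi)$ to the left of the $\ast$ at position $p$ match, so the heights in both $\mathfrak{p}_{l}$ and $\mathfrak{p}_{3-l}$ immediately before positions $x$ and $y$ are already pinned down. Second, $p$ and $q$ lying in the same block of $\alpha^{\ast}(\pi)$ implies that the subword of $\alpha^{\ast}(\pi)$ strictly between them is balanced in parentheses, which in turn forces balanced subwords between positions $x$ and $y$ in each of $\mathfrak{p}_{l}$ and $\mathfrak{p}_{3-l}$. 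Third, the inequality $x \le y$ is forced by the way $\alpha^{II}$ enumerates $\ast$'s in $\alpha^{\ast}(\pi)$ together with the observation $p < q$. Moving a single closing parenthesis from one end of such a balanced subword to the other end preserves the Dyck property, and this is the step I expect to require the most careful bookkeeping. Together with the induction, this yields $\alpha''(\pi) = \alpha^{I}(\pi)$.
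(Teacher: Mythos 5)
Your proposal is correct and follows essentially the same route as the paper's proof: identify the first disagreement $p$ and its pair $q$, observe that they come from the same integer of $\pi$ so the material strictly between them is balanced and carries other labels, interpret the label swap as the stated local parenthesis move, and iterate. You supply somewhat more detail than the paper does on why the intermediate words stay Dyck words (the balanced-subword argument), but the underlying idea is the same.
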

%%%%%%%%%%
\begin{proof}
The value $p$ is the first position in $s^{II}$ and $s^{II}$ such that 
two sequences have different entries.
Since the value $q$ is a pair of $s_{p}^{II}$, the marks $\ast$  
at $p$-th and $q$-th positions in $\alpha^{\ast}(\pi)$ belong 
to the same label $l$ in $\pi$.
The parentheses and $\ast$'s between the $p$-th and $q$-th positions
in $\alpha^{\ast}$ belong to a label different from $l$.
This implies that switching of the labels $s^{II}_{p}$ and $s^{II}_{q}$
means that we move parentheses at $x$-th position in $\mathfrak{p}_{1}$ right 
and at $y$-th position in $\mathfrak{p}_{2}$ left by keeping other parentheses
unchanged.
By repeating this process, we obtain $\alpha^{I}(\pi)$ from $\alpha^{II}(\pi)$,
which implies $\alpha^{I}(\pi)=\alpha''(\pi)$. 
\end{proof}

We are ready to construct $\alpha^{I}(\pi)$ from $\alpha^{II}(\pi)$
for general $b$.
As in the case of $b$, we define $s^{I}$ and $s^{II}$ as a sequence of 
integers in $[1,b]$ obtained from $\alpha^{\ast}(\pi)$.
We define integers $p$ as conditions (\ref{eqn:conds}).
The integer $q$ is chosen such that $s_{q}^{II}=s_{p}^{I}$ and $s^{II}_{p}$ 
and $s^{II}_{q}$ are a pair.
Suppose $s^{II}_{p}=l_1$ and $s^{II}_{q}=l_2$.
We have a $b$-tuple of Dyck paths $\alpha^{II}(\pi)=(\mathfrak{p}_1,\ldots,\mathfrak{p}_b)$.
Then, we apply the same algorithm as $b=2$ case to $\mathfrak{p}_{l_1}$ and $\mathfrak{p}_{l_2}$.
We continue this process until we have $s^{I}$ from $s^{II}$.
We denote by $\alpha'(\pi)$ the parenthesis presentation obtained from 
$\alpha^{II}(\pi)$ by the above procedures.

The following is obvious from Proposition \ref{prop:IfromII}:
\begin{prop}
\label{prop:ppmock}
We have $\alpha^{I}(\pi)=\alpha'(\pi)$.
\end{prop}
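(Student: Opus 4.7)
The plan is to prove $\alpha^{I}(\pi)=\alpha'(\pi)$ by iteratively reducing the general-$b$ algorithm that builds $\alpha'(\pi)$ from $\alpha^{II}(\pi)$ to the $b=2$ case already established in Proposition \ref{prop:IfromII}. The pivotal observation is that each elementary swap in the algorithm touches only two of the $b$ Dyck paths, namely $\mathfrak{p}_{l_1}$ and $\mathfrak{p}_{l_2}$ with $l_1=s^{II}_p$ and $l_2=s^{II}_q$, while all other components $\mathfrak{p}_k$ with $k\notin\{l_1,l_2\}$ remain untouched.

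First I would compare $s^{I}$ and $s^{II}$ as words in $[1,b]^{abn}$; both are rearrangements of the same multiset in which each label appears $an$ times, since the two enumerations of $\ast$'s (type $I$: right-to-left within each region bounded by a matched pair of parentheses labelled by a common integer; type $II$: right-to-left globally, ignoring the parentheses) redistribute the same $abn$ marks. Let $p$ be the leftmost position at which $s^{I}$ and $s^{II}$ disagree, and let $q$ be the accompanying index chosen as in the description preceding the statement, so that $s^{II}_p$ and $s^{II}_q$ constitute a pair and $s^{II}_q=s^{I}_p$.

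The next step is to apply the argument of Proposition \ref{prop:IfromII} to the restricted data $(\mathfrak{p}_{l_1},\mathfrak{p}_{l_2})$. Because positions $p$ and $q$ sit in the same tree region of the underlying $(b+1)$-ary tree (the pair condition), and because the other $b-2$ components are frozen, the local picture involving only the $l_1$-th and $l_2$-th Dyck words is exactly the $b=2$ situation. Hence the prescribed transformation — moving the $x$-th parenthesis of $\mathfrak{p}_{l_1}$ past the $y$-th, and symmetrically in $\mathfrak{p}_{l_2}$ — is precisely the $b=2$ swap, which by Proposition \ref{prop:IfromII} corresponds to exchanging the two labels $s^{II}_p$ and $s^{II}_q$ in the $\ast$-enumeration on $\alpha^{\ast}(\pi)$.

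Finally I would close with induction on the number of disagreement positions between $s^{I}$ and $s^{II}$. The leftmost-disagreement choice of $p$ forces every swap to fix every entry at positions $<p$, so after the swap the new $s^{II}$ agrees with $s^{I}$ on a strictly longer prefix; iterating at most $abn$ times yields $s^{II}=s^{I}$, at which point the resulting $b$-tuple must coincide with $\alpha^{I}(\pi)$ by Proposition \ref{prop:IfromII} applied one pair at a time. The only substantive point to verify is that applying the pairwise swap does not disturb agreement previously built up on an earlier prefix; this is automatic from the choice of $p$ and $q$, and is why the procedure is described as obvious once the $b=2$ case is in hand.
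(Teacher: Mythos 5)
Your proof is correct and follows essentially the same route as the paper, which offers no written argument at all and simply declares the statement ``obvious from Proposition~\ref{prop:IfromII}.'' Your write-up is exactly the fleshing-out of that reduction: each elementary swap involves only the two components $\mathfrak{p}_{l_1},\mathfrak{p}_{l_2}$, so the $b=2$ case applies verbatim, and induction on the length of the common prefix of $s^{I}$ and $s^{II}$ terminates the procedure.
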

%%%%%%%%

\begin{example}
We consider the path with step sequence $(0,1,1,3)$ in $\mathfrak{D}_{2}^{(2,3)}$.
We have 
\begin{align*}
s^{I}=(3((32(321)1)321)21), \quad s^{II}=(3((21(321)3)213)21),
\end{align*}
where $\ast$'s are replaced by their labels.
One can obtain $s^{I}$ from $s^{II}$ by switching underlined integers 
as follows:
\begin{align*}
(3((\underline{2}1(321)\underline{3})213)21)
&\rightarrow(3((3\underline{1}(321)\underline{2}(213)21)
\rightarrow(3((32(321)1)\underline{2}1\underline{3})21) \\
&\rightarrow(3((32(321)1)3\underline{1}\underline{2})21)
\rightarrow(3((32(321)1)321)21)
\end{align*}
In terms of parenthesis presentation, we have the following 
sequence of triple Dyck paths:
\begin{align*}
\alpha^{II}(\pi)=
\begin{matrix}
((()()))\\
(((\underline{)}())) \\
()((()\underline{)})
\end{matrix}
\rightarrow 
\begin{matrix}
(((\underline{)}()))\\
(((()\underline{)})) \\
()(()())
\end{matrix}
\rightarrow
\begin{matrix}
(((())))\\
((()()\underline{)}) \\
()(()()\underline{)}
\end{matrix}
\rightarrow
\begin{matrix}
(((())))\\
((()()\underline{)}) \\
()(()(\underline{)})
\end{matrix}
\rightarrow
\begin{matrix}
(((())))\\
((()())) \\
()(()())
\end{matrix}
=\alpha^{I}(\pi).
\end{align*}
\end{example}

\section{The horizontal strip-decomposition and orders}
\label{sec:Hsdmp}
\subsection{Horizontal strip-decomposition and multi-permutations}
In this subsection, we study the relation between a rational Dyck path 
$P\in\mathfrak{D}_{n}^{(a,b)}$ and a $b$-Stirling permutation.	
We also show that a $b$-tuple of Dyck paths $\mathbf{v}_{i}$, $1\le i\le b$, 
characterizes the $b$-Stirling permutation.
The $b$-tuple of Dyck paths $\delta(P)$ obtained from the horizontal strip 
decomposition of $P$ (see Definition \ref{defn:stripdec}) 
coincide with the Dyck paths $\mathfrak{v}_{i}$.

\begin{defn}
Let $P_0$ (resp. $P_1$) be the lowest (resp. highest) path associated with $\mathfrak{u}_{P}$ 
in $\mathfrak{D}_{n}^{(a,b)}$.
We define the $b$-Stirling permutations $\pi_{0}$ and $\pi_{1}$ by $\pi_{0}:=\zeta(\mathfrak{u}_{P_0})$ and
by $\pi_{1}:=\zeta(\mathfrak{u}_{P_1})$ respectively.
\end{defn}

Let $g$ be a positive rational number, and 
$\mathfrak{U}_{n}$ be the set of sequences of non-negative integer of length 
$n$.
\begin{defn}
Given a sequence $\mathfrak{u}:=(u_1,\ldots,u_{n})$ of non-negative integers,
we define the map $\zeta_{g}:\mathfrak{U}_{n}\rightarrow\mathfrak{U}_{n}, \mathfrak{u}\mapsto\mathfrak{u'}$ 
by 
\begin{align}
\zeta_{g}(\mathfrak{u}):=\zeta(g\cdot \mathfrak{u}), 
\end{align}
where $g\cdot \mathfrak{u}$ is $(gu_1,\ldots,gu_{n})$.
This definition is well-defined when 
$0\le gu_{i}\le ib$ is a non-negative integers for all $i\in[1,n]$.
\end{defn}

We define $\zeta_{g}$ for a positive rational number, but we use in practice 
$g=a$ or $1/a$.
By definition, it is obvious that $\zeta_{1}$ is $\zeta$ and 
the composition of $\zeta_{g}$ and $\zeta_{1/g}$ is nothing but the identity.

Let $P\in\mathfrak{D}_{n}^{(a,b)}$ be a rational Dyck path
associated with step sequence $\mathfrak{u}_{P}$ and $\delta(P):=(q_1,\ldots,q_{b})$
be its strip-decomposition.
Each Dyck path $q_{i}$ with $1\le i\le b$ is a Dyck path of size $an$.

Let $Q\in\{P_0,P_1\}$ be a rational Dyck path.
We recursively define $b$ sequences $\mathbf{v}_{i}$ with $1\le i\le b$ of $an$ non-negative integers 
from $\mathfrak{u}_{P}:=(u_1,\ldots,u_{an})$ as follows.
First, we define 
\begin{align*}
\mathfrak{u'}_{P}=(u'_1,\ldots,u'_{an}):=
\begin{cases}
\mathfrak{u}_{Q}-\mathfrak{u}_{P}, & Q=P_{0}, \\
\mathfrak{u}_{P}, & Q=P_1.
\end{cases}
\end{align*}
Secondly, we define 
\begin{align}
\label{eqn:hsdecv1}
\begin{aligned}
\mathbf{v}_1&:=\left\lceil \genfrac{}{}{0.8pt}{}{a}{b}\cdot \mathfrak{u'}_{P} \right\rceil, \\
&=(\lceil au'_{1}/b\rceil,\ldots,\lceil au'_{an}/b\rceil).
\end{aligned}
\end{align}
Similarly, for $i\ge2$ we define
\begin{align}
\label{eqn:hsdecv2}
\mathbf{v}_{i}:=\left\lceil(a\cdot\mathfrak{u'}_{P}-\sum_{j=1}^{i-1}\mathbf{v}_{j})/(b-i+1)\right\rceil.
\end{align}

Let $v:=(v_1,\ldots,v_{n})$ be a sequence of $n$ non-negative integers 
such that $v_{i}\le i-1$.
We construct a permutation $w:=(w_1,\ldots,w_n)$ on $[1,n]$ from 
a non-decreasing sequence $\mathfrak{u}:=(u_1,\ldots,u_{n})$ recursively as follows.
\begin{enumerate}
\item Define $S=\{1,\ldots,n\}$ and set $i=n$.
\item Define $w_i$ is the $u_{i}+1$-th smallest element in $S$.
We replace $S$ by $S\setminus\{w_{i}\}$.
\item Decrease $i$ by one and go to (2). Algorithm stops when one has 
a permutation $w$.
\end{enumerate}
We denote by $\eta: v\mapsto w$ the map defined as above.

\begin{defn}
\label{defn:weta}
We have $b$ permutations $\mathbf{w}_{i}$ with $1\le i\le b$ 
from $P\in\mathfrak{D}_{n}^{(a,b)}$ by
\begin{align*}
\mathbf{w}_{i}:=\eta(\mathbf{v}_{i}).
\end{align*}
\end{defn}

In case of $Q=P_1$, we give another description of $\mathbf{w}_{i}$ in terms of a rooted tree 
defined in Section \ref{sec:rtree}.
Let $T(q)$ be a rooted tree for the Dyck path $q\in\mathfrak{D}_{an}^{(1,1)}$.
We put labels on the edge in $T(q)$ by the post-order, and denote by $L$ this 
decreasing label.
Then, we obtain a permutation $w(L)$ by reading the labels in $L$ by the pre-order.	

We denote by $L_{i}$ with $1\le i\le b$ the decreasing labels of the rooted 
tree $T(q_{i})$ with $1\le i\le b$ and 
by $w(L_{i})$ the pre-order word of $L_{i}$.

\begin{prop}
\label{prop:wrt}
Let $w(L_{i})$ be the pre-order word of $L_{i}$, and 
$\mathbf{w}_{i}$ be the permutation defined in Definition \ref{defn:weta}
with $Q=P_1$.
Then, we have $w(L_{i})=\mathbf{w}_{i}$ for $1\le i\le b$.
\end{prop}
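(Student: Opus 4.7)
The statement factors through the identity $\mathbf{v}_k = \mathfrak{u}_{q_k}$, where $q_k$ is the $k$-th Dyck path in the horizontal strip decomposition of $P$ (Definition \ref{defn:stripdec}). Once that identification is in hand, it suffices to check separately that for every $(1,1)$-Dyck path $q$, the pre-order reading $w(L(q))$ of the post-order edge labelling equals $\eta(\mathfrak{u}_q)$. I would prove these two facts independently and combine them.

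For the tree step, unpacking the construction of $\eta$ given just above Definition \ref{defn:weta} shows that $w = \eta(v)$ is characterised by $v_i = \#\{j < i : w_j < w_i\}$. Specialising to $w = w(L(q))$, the inequality $w_j < w_i$ with $j < i$ means that the $j$-th edge in pre-order is also visited earlier than the $i$-th one in post-order, which happens exactly when the entire subtree below $e_j$ is completed before we first reach $e_i$. Translated back to the Dyck path, this counts the $N$-steps preceding the $i$-th $N$ whose matching $E$-step also precedes it; by the balanced matching this is the total number of $E$-steps before the $i$-th $N$, namely $\mathfrak{u}_q[i]$. Hence $w(L(q)) = \eta(\mathfrak{u}_q)$.

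For the step-sequence step, I would compute $\mathfrak{u}_{q_k}[i]$ directly from Definition \ref{defn:stripdec}. The expansion $\mathfrak{H}_k[j] = h_{\lceil (k+(j-1)b)/a\rceil}$ together with the step/height duality $u_i = \#\{m : h_m \le i-1\}$ for $P$ yields the closed form
\[
\mathfrak{u}_{q_k}[i] \;=\; \left\lfloor \frac{a u_i - k}{b} \right\rfloor + 1 \;=\; \left\lceil \frac{a u_i - k + 1}{b} \right\rceil .
\]
On the $\mathbf{v}$ side, setting $Q = P_1$ gives $\mathfrak{u}'_P = \mathfrak{u}_P$, and I would prove by induction on $k$ that $\mathbf{v}_k[i]$ matches the same closed form. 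The base $k=1$ is exactly (\ref{eqn:hsdecv1}); the inductive step, after substituting the closed form into (\ref{eqn:hsdecv2}), reduces to showing that the ceiling of the average of the $b-k+1$ values $\lceil m/b \rceil$ for consecutive integers $m = a u_i - b + 1, \ldots, a u_i - k + 1$ equals $\lceil (a u_i - k + 1)/b \rceil$. This arithmetic identity is the main technical obstacle, and it is settled by a short case split depending on the residue of $a u_i$ modulo $b$.

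Combining the two steps yields $w(L_k) = \eta(\mathfrak{u}_{q_k}) = \eta(\mathbf{v}_k) = \mathbf{w}_k$, which is the claim. The conceptual content lies in the tree-side observation, while the bulk of the calculation is concentrated in the ceiling-function identity driving the induction on the $\mathbf{v}$ side.
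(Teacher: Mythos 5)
Your proposal is correct. Its tree-side half coincides with what the paper's entire proof actually contains: the paper argues that for a single Dyck path $q$ the entry $\mathfrak{u}_q[i]$ counts the edges lying strictly to the left of the $i$-th pre-order edge, so that reading the post-order labels in pre-order produces $\eta(\mathfrak{u}_q)$; your reformulation via the code $v_i=\#\{j<i:\ w_j<w_i\}$ and the matching of $N$- and $E$-steps is a tightened version of the same argument. Where you genuinely differ is the other half. The paper's proof tacitly identifies $\mathbf{v}_i$ with the step sequence of $q_i$ without comment; that identification is exactly Theorem \ref{thrm:stripdec}, which only appears later and is deduced there from Theorem \ref{thrm:distPQ} by induction on the Young order. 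You instead establish $\mathbf{v}_k=\mathfrak{u}_{q_k}$ directly through the common closed form $\left\lceil (au_i-k+1)/b\right\rceil$: on the strip-decomposition side this drops out of $u_i=\#\{m:\ h_m\le i-1\}$ applied to the repeated height sequence, and on the $\mathbf{v}$-side from the telescoping identity $\sum_{m=N-b+1}^{N}\lceil m/b\rceil=N$ together with the residue case split (writing $au_i=bp+q$ with $0\le q\le b-1$, both sides equal $p+1$ for $k\le q$ and $p$ for $k>q$, consistent with Eqn.~(\ref{eqn:vp})). I checked this arithmetic and it is sound. Your route is more self-contained and makes explicit a dependency the paper leaves implicit; the paper's route obtains the same identification as a corollary of the more general Theorem \ref{thrm:distPQ}, which it needs in any case to handle $Q=P_0$.
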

%%%%%%%%%
\begin{proof}
Let $\mathbf{u}(q):=(u_1,\ldots,u_{an})$ be the step sequence for 
the Dyck path $q$.
Since we put labels on edges in the tree $T(q)$ by the post-order,  
the value $u_{j}$ is equal to the number of edges such that they have 
labels in $[1,j-1]$ and they are strictly left to the edge labeled by $j$.
An edge of the rooted tree for $q$ corresponds to a pair of 
$N$ and $E$ in the Dyck word $q$.
To visit edges by the pre-order is equivalent to visit 
$N$ steps in $q$ from left to right one-by-one.
From these observations, if we read the labels of $L_i$ by the pre-order, 
the word $w(L_i)$ is equal to $\mathbf{w}_{i}$.
\end{proof}

\begin{remark}
Proposition \ref{prop:wrt} is valid only for $Q=P_1$. 
This is because $\mathbf{v}_{i}$ is not non-decreasing in general for $Q=P_0$.
See Example \ref{exam:rDpStir}.
\end{remark}

We will construct a multi-permutation $\mu=(\mu_{1},\ldots,\mu_{abn})$ consisting 
of $b$ $i$'s with $1\le i\le an$ from the $b$ permutations $\mathbf{w}_{i}$ with $1\le i\le b$.

\begin{defn}
\label{defn:wdprime}
Suppose that a positive integer $i$ is uniquely written as 
$i=b(p-1)+q$ with $1\le p\le an$ and $1\le q\le b$.
We denote by $\mathbf{w'}_{i}:=(w'_{i,1},\ldots,w'_{i,an})$ with $1\le i\le b$ the inverse 
of the permutation $\mathbf{w}_{i}$.
Then, we define 
\begin{align}
\mu_{i}=w'_{q,p}.
\end{align}
\end{defn}

Let $Q\in\{P_{0},P_{1}\}$ be a rational Dyck path. 
Then, we have a $b$-Stirling permutation $\mu$ given a path $Q$.
Note that $\mu$ depends on the choice of $Q$ since $\mathfrak{u'}_{P}$ 
depends on $Q$.
\begin{theorem}
\label{thrm:distPQ}
Let $P\in\mathfrak{D}_{n}^{(a,b)}$ associated with $\mathfrak{u}_{P}$ and 
$\mu$ be a multi-permutation obtained from $\mathfrak{u}_{P}$. 
\begin{enumerate}
\item $\mu$ is a $b$-Stirling permutation.
\item $\mu=\zeta_{a}(\mathfrak{u'}_{P})$.
\end{enumerate}
\end{theorem}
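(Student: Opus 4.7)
The plan is to prove both statements simultaneously by identifying $\mu$ with the $b$-Stirling permutation $\pi := \zeta_{a}(\mathfrak{u'}_{P})$ of part (2); part (1) will follow immediately, since $\zeta_{a}$ always produces a $b$-Stirling permutation (each block $i^{b}$ is inserted contiguously into a permutation of strictly smaller values, which preserves the Stirling condition). To prepare, I would first unfold the recursion \eqref{eqn:hsdecv1}--\eqref{eqn:hsdecv2} at each coordinate $k$: writing $a(\mathfrak{u'}_{P})_{k} = Qb + R$ with $0 \le R < b$, a short induction on $q$ yields the closed form $(\mathbf{v}_{q})_{k} = Q + [q \le R]$, whence $\sum_{q=1}^{b}\mathbf{v}_{q} = a\,\mathfrak{u'}_{P}$ and $(\mathbf{v}_{q})_{k} \le k-1$, so each $\eta(\mathbf{v}_{q})$ is well-defined.

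The central structural observation is a residue decomposition of $\pi$ modulo $b$. Each insertion of a block $i^{b}$ during $\zeta_{a}$ shifts later positions by exactly $b$, preserving the residue of every already-placed entry, while the new block itself spans $b$ consecutive positions and hence hits every residue class exactly once. It follows that each value $v \in [1,an]$ has, in the final $\pi$, exactly one copy at each residue $q \in \{1, \ldots, b\}$, so the subsequence
\[
\pi|_{q} := (\pi_{q}, \pi_{b+q}, \ldots, \pi_{(an-1)b+q})
\]
is a genuine permutation of $[1, an]$ for every $q$.

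The core of the argument is an induction on $i = 1, \ldots, an$ establishing that
\[
\pi^{(i)}|_{q} = \zeta_{1}\bigl((\mathbf{v}_{q})_{1}, \ldots, (\mathbf{v}_{q})_{i}\bigr),
\]
where $\pi^{(i)}$ is the partial permutation after inserting $1^{b}, \ldots, i^{b}$ and $\zeta_{1}$ denotes $\zeta$ specialized to block size $1$. The inductive step compares the $\zeta_{a}$-insertion of $i^{b}$ at position $au'_{i}$ in $\pi^{(i-1)}$ with the $\zeta_{1}$-insertion of $i$ at position $(\mathbf{v}_{q})_{i}$ in $\pi^{(i-1)}|_{q}$. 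These agree provided the residue-$q$ copy of $i$ attains the correct rank among residue-$q$ positions of $\pi^{(i)}$. A direct count using $au'_{i} = Qb + R$ shows that the number of residue-$q$ positions of $\pi^{(i-1)}$ lying in $[1, au'_{i}]$ is $Q + [q \le R] = (\mathbf{v}_{q})_{i}$, which is exactly the required rank; the case split on whether $q \le R$ or $q > R$ is the technical crux.

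The final ingredient is a short lemma $\zeta_{1}(v) = \eta(v)^{-1}$: the left-smaller statistic $\#\{j < i : \tau^{-1}(j) < \tau^{-1}(i)\}$ of a permutation $\tau$ is preserved under insertion of values larger than $i$, so the input $v_{i}$ to $\zeta_{1}$ records precisely this count on $\zeta_{1}(v)^{-1}$, which matches the defining property of $\eta$. Taking $v = \mathbf{v}_{q}$ yields $\pi|_{q} = \eta(\mathbf{v}_{q})^{-1} = \mathbf{w}_{q}^{-1} = \mathbf{w'}_{q}$ for every $q$. Substituting into the definition $\mu_{b(p-1)+q} = w'_{q,p}$ gives $\mu_{b(p-1)+q} = \mathbf{w'}_{q}(p) = \pi|_{q}(p) = \pi_{b(p-1)+q}$, so $\mu = \pi = \zeta_{a}(\mathfrak{u'}_{P})$, proving (2) and thus (1). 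I expect the rank computation in the third paragraph to be the principal obstacle, as it is precisely where the ceiling recursion defining $\mathbf{v}_{q}$ must align exactly with the combinatorial count of residue classes below the insertion point.
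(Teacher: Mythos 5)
Your argument is correct, but it takes a genuinely different route from the paper's. The paper proceeds by induction on the Young order of paths: it starts from the highest path $P_1$, where all $\mathbf{v}_i$ vanish and $\mu=(an^b,\ldots,1^b)$, and for a cover $P'\lessdot_{\mathrm Y}P$ it tracks how the single changed entry $u_t$ propagates through the $\mathbf{v}_i$ via the same closed form you derive (its Eqn.~(\ref{eqn:vp}) is your $(\mathbf{v}_q)_k=Q+[q\le R]$), arguing that the block of $t$'s stays contiguous in $\mu^{t}$ and shifts by exactly $a$ positions; it treats only $Q=P_1$ explicitly and declares $Q=P_0$ analogous. You instead argue directly: you observe that $\zeta_a$ places exactly one copy of each value in each residue class modulo $b$, identify the residue-$q$ subword of $\zeta_a(\mathfrak{u'}_{P})$ with $\zeta_1(\mathbf{v}_q)$ by induction on the insertion index (your rank count of residue-$q$ positions below $au'_i$ is exactly the required check, and it does go through in both cases $q\le R$ and $q>R$), and then invoke the identity $\zeta_1(v)=\eta(v)^{-1}$ to match Definition~\ref{defn:wdprime}. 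Your route buys three things the paper leaves implicit: it handles $Q=P_0$ and $Q=P_1$ uniformly, since neither the closed form nor the rank count uses monotonicity of $\mathfrak{u'}_{P}$; it isolates the reusable lemma $\zeta_1=\eta(\cdot)^{-1}$, which is the real reason Definition~\ref{defn:wdprime} interleaves the \emph{inverses} $\mathbf{w'}_q$ rather than the $\mathbf{w}_q$ themselves; and it dispenses with the induction over the path poset. What the paper's approach buys in exchange is an explicit description of how $\mu$ changes under a single Young cover, which fits the order-theoretic viewpoint of the surrounding sections. Both proofs ultimately rest on the same arithmetic fact, namely that the division $au'_k=Qb+R$ controls the entries of the $\mathbf{v}_q$.
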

\begin{proof}
We consider the case $Q=P_{1}$ since the case $Q=P_{0}$ is shown 
by a similar argument.
Below, we set $Q=P_1$. 

We prove the statement by induction on the Young order.
In case of $P=P_1$, 
all $\mathbf{v}_{i}=(0,\ldots,0)$.
From Definition \ref{defn:weta} and Definition \ref{defn:wdprime}, we have 
a multi-permutation $\mu_{P_1}$ given by 
\begin{align*}
\mu_{P_1}=(an^{b},(an-1)^{b},\ldots,1^{b}).
\end{align*}
It is obvious that $\mu_{P_1}$ is a $b$-Stirling permutation and 
$\mu=\zeta_{a}(0^{an})=\zeta_{a}(\mathfrak{u'}_{P_1})$.

We assume that the statement is true for all Dyck paths $P$ satisfying
$P'<_{Y}P$. 
We will show that the statement is also true for $P'$.
Let $\mathfrak{u}_{P}:=(u_1,\ldots,u_{an})$ and 
$\mathfrak{u}_{P'}:=(u'_1,\ldots,u'_{an})$ be step sequences associated 
with $P$ and $P'$.
By induction assumption, there exists a Dyck path $P$ such that 
$P'\lessdot_{Y}P$, $u_{t}=u'_{t}+1$ and $u_s=u'_{s}$ for $s\neq t$.	

To specify a Dyck path, we write $\mathbf{v}_{i}$ for $P$ 
as $\mathbf{v}_{i}(P)$.
The difference between $\mathbf{v}_{i}(P)$ and $\mathbf{v}_{i}(P')$ 
appears only at $t$-th entry, and other entries are the same by 
construction of $\mathbf{v}_{i}$.
Further, from definition of $\mathbf{v}_{i}(P)$, it is clear that 
$\mathbf{v}_{i}(P)$ is a non-decreasing sequence.

Let $\overline{\mathbf{v}(P)}:=(v_1(P),\ldots,v_{an}(P))$ be an integer 
sequence such that $v_{j}(P)$ is the $t$-th entry of $\mathbf{v}_{j}(P)$. 
Suppose that $au_{t}=bp+q$ is uniquely written by $0\le q\le b-1$.
Then, it follows from the definition of $\mathbf{v}_{i}$ that 
\begin{align}
\label{eqn:vp}
\begin{aligned}
&v_{k}(P)=p+1, \quad k\in[1,q], \\
&v_{k}(P)=p, \quad k\in[q+1,b-1].
\end{aligned}
\end{align}
Write $a(u_{t}+1)=bp'+q'$ with $0\le q'\le b-1$.
The sequence $\overline{\mathbf{v}(P')}$ is similarly given by 
Eqn. (\ref{eqn:vp}) by replacing $(p,q)$ by $(p',q')$.
Since $\mathbf{w'}_{i}(P)$ is the inverse permutation of $\eta(\mathbf{v}_{i}(P))$,
the value $p+1$ or $p$ in Eqn. (\ref{eqn:vp}) are the positions of $t$ 
in $\mathbf{w'}_{i}(P)$. 
Let $\mu^{t}$ be a multi permutation obtained from $\mu_{i}$ by deleting 
integers larger than $t$.
Then, the integer $t$'s are placed next to each other in $\mu^{t}$.
This implies that $\mu$ is a $b$-Stirling permutation. 

As in the previous paragraph, $p$ or $p+1$ for $P$ and $p'$ or $p'+1$
for $P'$ in Eqn. (\ref{eqn:vp}) indicate the positions of $t$ in $\mu^{t}$.
Note that the sum of $v_{i}(P)$ (resp. $v_{i}(P')$) is $au_{t}$ and $au_{t}+a$.
This means that the integer $t$'s in $\mu^{t}$ for $P'$ are 
placed right by $a$ compared to the case of $P$ by construction of $\mathbf{v}_{i}(P)$.
From this observation combined with the induction assumption, 
we have $\mu=\zeta_{a}(\mathfrak{u'}_{P'})$.
This completes the proof.
\end{proof}

\begin{cor}
For $Q=P_1$, we have $\mathbf{v}_{i}\lessdot_{Y}\mathbf{v}_{i+1}$ for all $i\in[1,b-1]$.
For $Q=P_{0}$, we have $\mathbf{v}_{i}-\mathbf{v}_{i+1}$ is a non-negative integer sequence 
for all $i\in[1,b-1]$.
\end{cor}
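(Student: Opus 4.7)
The plan is a position-by-position analysis of the ceiling recursion defining $\mathbf{v}_i$. The key observation is that the recursion
\[
\mathbf{v}_i = \left\lceil \frac{a\,\mathfrak{u'}_P - \sum_{j<i}\mathbf{v}_j}{b-i+1} \right\rceil
\]
decouples entirely across coordinates, because the ceiling is applied componentwise and the scalar denominator $b-i+1$ is the same at every coordinate. Thus for each fixed $k\in[1,an]$, the $b$ values $\mathbf{v}_1(k),\ldots,\mathbf{v}_b(k)$ realize the iterated ``fair division'' of the single integer $N:=au'_k$ into $b$ parts, with no interaction between distinct coordinates.

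I would then prove by induction on $i$ the following explicit closed form: writing $N = pb+r$ with $0\le r < b$, one has $\mathbf{v}_i(k) = p+1$ for $1\le i\le r$ and $\mathbf{v}_i(k) = p$ for $r < i \le b$. The induction step is mechanical: after the first $i-1$ values are installed, the residual numerator at position $k$ equals $p(b-i+1)+\max(0,\,r-i+1)$, whose ceiling over $b-i+1$ returns $p+1$ exactly when $i\le r$ and $p$ otherwise. This pointwise identity is the scalar shadow of the vector $\overline{\mathbf{v}(P)}$ already manipulated in the proof of Theorem \ref{thrm:distPQ}. From the formula, $\mathbf{v}_i(k)-\mathbf{v}_{i+1}(k)\in\{0,1\}$ at every coordinate, with value $1$ precisely when the residue of $au'_k$ modulo $b$ equals $i$; in particular $\mathbf{v}_i-\mathbf{v}_{i+1}$ is a non-negative integer sequence, settling the $Q=P_0$ half directly.

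For $Q=P_1$, Theorem \ref{thrm:distPQ}(2) ensures that each $\mathbf{v}_i$ is itself a $(1,1)$-Dyck step sequence of size $an$, so the Young order of Section \ref{sec:rDp} is applicable; the $0/1$-valued difference exhibits a chain of elementary Young covers from $\mathbf{v}_i$ down to $\mathbf{v}_{i+1}$, one cover per coordinate carrying residue $i$. The main subtle point, and the only place I expect the argument to slow, is that typically several coordinates share the residue $i$, so $\mathbf{v}_{i+1}$ is not an immediate cover of $\mathbf{v}_i$ in a single step but is reached by a chain of covers; the assertion $\mathbf{v}_i\lessdot_Y\mathbf{v}_{i+1}$ must accordingly be read through the transitive-reflexive closure of $\lessdot_Y$, which is consistent with the non-negativity formulation given in parallel for $Q=P_0$. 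The genuine work in writing this out is checking that the boxes can be removed one at a time in a valid order (processing coordinates of equal residue from left to right) so that every intermediate sequence remains weakly increasing, and this is immediate from the pointwise formula together with monotonicity of $\mathfrak{u}_P$.
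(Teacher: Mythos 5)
Your proposal is correct and follows essentially the same route as the paper: the closed form you derive coordinatewise (writing $au'_k=pb+r$ and showing $\mathbf{v}_i(k)=p+1$ for $i\le r$ and $p$ otherwise) is precisely Eqn.~(\ref{eqn:vp}) established in the proof of Theorem~\ref{thrm:distPQ}, from which the paper declares the corollary obvious. Your additional care in reading $\lessdot_Y$ through its transitive closure and in checking that intermediate sequences stay weakly increasing is sound but does not change the argument.
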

%%%%%%%%
\begin{proof}
It is obvious from Eqn. (\ref{eqn:vp}) in Theorem \ref{thrm:distPQ}.
\end{proof}

\begin{remark}
In Theorem \ref{thrm:distPQ}, the multi-permutation $\mu$ roughly measures the distance 
from the fixed path $Q$ since it is given by $\zeta_{a}(\mathfrak{u'}_{P})$.
\end{remark}

\begin{example}
\label{exam:rDpStir}
Let $P$ be a rational Dyck path in $\mathfrak{D}_{2}^{(2,3)}$ with 
the step sequence $\mathfrak{u}_{P}=(0,1,1,3)$. 
\begin{enumerate}
\item $Q=P_0$.
The lowest path $P_{0}$ has the step sequence $\mathfrak{u}_{P_0}=(0,1,3,4)$.
We have 
\begin{align*}
\mathbf{v}_{1}=(0,0,2,1), \quad
\mathbf{v}_{2}=(0,0,1,1), \quad
\mathbf{v}_{3}=(0,0,1,0). 
\end{align*}
Thus the permutations $\mathbf{w}_{i}:=\eta(\mathbf{v}_{i})$ for 
$1\le i\le 3$ are given by 
\begin{align*}
\mathbf{w}_1=3142, \quad
\mathbf{w}_2=4132, \quad
\mathbf{w}_3=4231.
\end{align*}
By taking the inverses of $\mathbf{w}_{i}$ and constructing $\mu$, 
we obtain 
\begin{align*}
\mu=224442133311.
\end{align*}
Note that $\mu=\zeta_{2}((0,0,2,1))$.
\item $Q=P_1$. 
By a similar calculation to (1), we have 
\begin{align*}
\mathbf{v}_1=1423,\quad
\mathbf{v}_2=1423,\quad
\mathbf{v}_3=4213.
\end{align*} 
Thus $\mu=113332444221$, which is $\zeta_{2}((0,1,1,3))$.	
\end{enumerate}
\end{example}

\begin{theorem}
\label{thrm:stripdec}
Let $P\in\mathfrak{D}_{n}^{(a,b)}$ be a rational Dyck path and 
$\delta(P):=(q_1,\ldots,q_b)$ as in Definition \ref{defn:stripdec}. 
Let $\mathbf{i}$, $1\le i\le b$, be a $b$-tuple of Dyck words for $Q=P_{1}$.
Then, the step sequence of $q_i$ is $\mathbf{v}_{i}$.
\end{theorem}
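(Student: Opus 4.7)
The plan is to compute the step sequence of $q_i$ directly from $\mathfrak{u}_P$ and match it, entry by entry, against the closed form of $\mathbf{v}_i$ extracted during the proof of Theorem~\ref{thrm:distPQ}. First I would invoke the duality between step and height sequences for $(1,1)$-Dyck paths: if $q\in\mathfrak{D}_N^{(1,1)}$ has height sequence $(t_1,\ldots,t_N)$, then its step sequence $(s_1,\ldots,s_N)$ satisfies $s_k=\#\{j\in[1,N]:t_j\le k-1\}$. Specialized to $P$ itself, this gives the equivalence $h_m\le k-1\iff m\le u_{P,k}$, which is my main computational tool.

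Next, unravelling Definition~\ref{defn:stripdec}, the $k$-th entry of the height sequence $\mathfrak{H}_i$ of $q_i$ is $h_{\lceil(i+(k-1)b)/a\rceil}$ for $k\in[1,an]$. Substituting into the step-sequence formula and applying the equivalence above, the defining condition for $s_{i,k}$ reduces to $i+(j-1)b\le au_{P,k}$; counting admissible $j$ yields
\[
s_{i,k}=\begin{cases}\lfloor(au_{P,k}-i)/b\rfloor+1, & au_{P,k}\ge i,\\ 0, & au_{P,k}<i.\end{cases}
\]
On the other hand, equation~(\ref{eqn:vp}) in the proof of Theorem~\ref{thrm:distPQ} (specialized to $Q=P_1$, where $\mathfrak{u'}_P=\mathfrak{u}_P$) says: writing $au_{P,k}=bp_k+q_k$ with $0\le q_k\le b-1$, we have $v_{i,k}=p_k+1$ for $1\le i\le q_k$ and $v_{i,k}=p_k$ for $q_k<i\le b$.

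It remains to match the two formulas by a three-case computation. If $i\le q_k$, Euclidean division gives $au_{P,k}-i=bp_k+(q_k-i)$ with remainder in $[0,b-1]$, hence $s_{i,k}=p_k+1=v_{i,k}$. If $q_k<i\le b$ and $au_{P,k}\ge i$ (which forces $p_k\ge1$), rewriting $au_{P,k}-i=b(p_k-1)+(b+q_k-i)$ with remainder in $[0,b-1]$ gives $s_{i,k}=p_k=v_{i,k}$. Finally, if $au_{P,k}<i$, then necessarily $p_k=0$ and $q_k<i$, so both sides vanish. In all cases $s_{i,k}=v_{i,k}$, which is the desired conclusion. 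The argument is essentially a direct calculation; the only delicate point is careful bookkeeping at the boundary $au_{P,k}<i$ and the indexing of $\mathfrak{H}_i$ (whose true length is $an$, not $n$).
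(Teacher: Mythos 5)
Your proof is correct, and it is a fully explicit, computational realization of what the paper only sketches. The paper's own proof is two sentences of heuristic: it observes that repeating each $h_i$ $a$ times in $\mathfrak{H}$ "corresponds to" multiplying $\mathfrak{u}_P$ by $a$ in the construction of $\mathbf{v}_i$, and then appeals to Theorem \ref{thrm:distPQ} without ever comparing entries. You instead make the correspondence precise: the step/height duality $u_k=\#\{m: h_m\le k-1\}$ turns the $k$-th step-sequence entry of $q_i$ into the count of $j$ with $i+(j-1)b\le au_{P,k}$, giving the closed form $\lfloor(au_{P,k}-i)/b\rfloor+1$ (or $0$), and the three-case Euclidean-division check against Eqn.~(\ref{eqn:vp}) is exactly right — including the observation that $au_{P,k}\ge i>q_k$ forces $p_k\ge 1$, which is the one place the computation could have gone wrong. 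Your reliance on Eqn.~(\ref{eqn:vp}) is legitimate: it is established (by an easy induction on $i$ from Eqns.~(\ref{eqn:hsdecv1})--(\ref{eqn:hsdecv2})) inside the proof of Theorem \ref{thrm:distPQ}, which precedes this statement, so there is no circularity; and extracting only that closed form is a lighter dependency than the paper's wholesale invocation of the theorem. You also correctly flag and work around two indexing slips in the source (the length of $\mathfrak{H}_i$ should be $an$, and the range in (\ref{eqn:vp}) should run to $b$ rather than $b-1$). The only cosmetic point: the duality you state for $(1,1)$-Dyck paths is really a fact about arbitrary $N/E$ lattice paths (you apply it both to the $(a,b)$-path $P$ and to the $(1,1)$-paths $q_i$), so "specialized to $P$" should read "applied to $P$"; this does not affect the argument.
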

%%%%%%%%%%
\begin{proof}
The $b$-tuple of Dyck paths $\delta(P)$ are constructed from the height 
sequence $(h_1,\ldots,h_{bn})$ of $P$. 
In this process, we repeat $h_{i}$ $b$ times in the 
sequence $\mathfrak{H}$ (see Definition \ref{defn:stripdec}).
On the other hand, the sequences $\mathbf{v}_{i}$'s are constructed 
from the step sequence $\mathfrak{u}_{P}$ (see Eqn. (\ref{eqn:hsdecv1}) and 
(\ref{eqn:hsdecv2})).
In this process, we multiply $a$ on $\mathfrak{u}_{P}$.
This corresponds to repeating $h_{i}$ in $\mathfrak{H}$.

The observations above and Theorem \ref{thrm:distPQ} implies that
the Dyck paths $\delta(P)$ coincide with the Dyck paths $\mathbf{v}_{i}$,
$1\le i\le b$.
Thus, the step sequence of $q_{i}$ is $\mathbf{v}_{i}$.	 
\end{proof}

\subsection{Rotation for a \texorpdfstring{$b$-tuple}{b-tuple} of Dyck paths}
In this subsection, we show a rotation on a rational 
Dyck path by use of a $b$-tuple of Dyck paths.

From Theorem \ref{thrm:distPQ}, a rational Dyck path $P\in\mathfrak{D}_{n}^{(a,b)}$
corresponds to a $b$-Stirling permutation $\mu$.
We construct a parenthesis presentation of type $I$ from $\mu$ as in Section \ref{sec:pmDw},
and obtain the $b$-tuple of Dyck paths $\mathfrak{a}$ by the parenthesis 
presentation of type $I$ (see Definition \ref{defn:alphaast}).
Note that the step sequence of $\mu$ is obtained from the step sequence of $P$ 
by the action of $\zeta_{a}$.
In Section \ref{sec:rotmDw}, we define a rotation on $\mathfrak{a}$ for $(a,b)=(1,m)$.
Due to the action of $\zeta_a$, we modify the definition as follows.

Let $\epsilon(i)$ be the left parenthesis in $\alpha^{\ast}(\mu)$ as in Section 
\ref{sec:rotmDw}.
Then, We denote by $l(j)$, $1\le j\le a$, the label of the $j$-th $\ast$ 
left to $\epsilon(i)$.
We perform a rotation of size $n'$ at position $r$ on 
$a_{l(j)}$ for $1\le j\le a$, and the other $a_{p}$'s with $p\notin\{l(j) |1\le j\le a\}$
remains the same.

Then, the following proposition is a direct consequence of the definition of the rotation 
and Proposition \ref{prop:rotonmDw}.
\begin{prop}
The above mentioned operation on a $b$-tuple of Dyck paths constructed from $\zeta_a(\mu)$ 
by the parenthesis presentation of type $I$ gives a rotation on $P$.
\end{prop}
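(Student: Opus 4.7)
The plan is to pass through the scaled parenthesis presentation and reduce to the $(1,b)$-case already established in the theorem at the end of Section \ref{sec:rotmDw}. By Theorem \ref{thrm:distPQ}, the $b$-Stirling permutation $\mu = \zeta_a(\mathfrak{u}_P)$ corresponds to a $(1,b)$-Dyck path $\widetilde{P} \in \mathfrak{D}_{an}^{(1,b)}$ whose step sequence is the termwise scaling $a\cdot\mathfrak{u}_P$. The first step is to observe that the primitive-subsequence condition (\ref{eqn:condrot}) is homogeneous under multiplication by $a$: rewriting $u_j - u_i < (b/a)(j-i)$ as $au_j - au_i < b(j-i)$ shows that the primitive subsequence of $\mathfrak{u}_P$ at position $i$ and the primitive subsequence of $a\cdot\mathfrak{u}_P$ at position $i$ have identical endpoints. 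Therefore the $(a,b)$-rotation $P\lessdot_{\mathrm{rot}}P'$ decreasing $u_i,\ldots,u_k$ by one each scales to a decrease of $au_i,\ldots,au_k$ by $a$ each, which is precisely the effect of $a$ successive elementary $(1,b)$-rotations applied to $\widetilde{P}$ at position $i$.

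The second step is to track what these $a$ elementary $(1,b)$-rotations do to $\alpha^\ast(\mu)$ and to the $b$-tuple $\mathfrak{a}=\alpha^I(\mu)$. By Proposition \ref{prop:rotonpp}, each one pushes the block of $n'$ balanced parentheses anchored at $\epsilon(i)$ and $n_1$ left past a single $\ast$, and by Proposition \ref{prop:rotonmDw} this is realized as a size-$n'$ rotation at position $r$ on a single component $a_{l_\ast}$ of $\mathfrak{a}$, with the remaining components unchanged. After one such rotation the anchor $n_1$, the block-size $n'$, and the position $r$ remain the same (only the moving block is shifted and the $\ast$ it just crossed now sits to its right), so the next elementary rotation is of the same kind on a different component, determined by the label of the next $\ast$ to the left. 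Iterating $a$ times consumes the $a$ consecutive $\ast$'s immediately to the left of $\epsilon(i)$, whose labels are by definition $l(1),\ldots,l(a)$; since $a<b$ these labels are pairwise distinct, so the $a$ elementary rotations act on disjoint components of $\mathfrak{a}$ and commute trivially, which matches the definition of the modified operation.

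The main obstacle is to verify rigorously that, as we iterate the elementary rotation, the parameters $n'$ and $r$ stay invariant so that each of the $a$ rotations attacks the designated $\ast$ and nothing else. This reduces to showing that moving the balanced block of $n'$ parentheses past one $\ast$ neither changes the subtree rooted at $n_1$ nor alters the count of $\ast$'s of any label lying to the left of the moving $\epsilon(i)$ other than the one just crossed; both facts are immediate from the definitions of $n'$ and $r$ together with the observation that the balanced block itself does not contain the anchor $n_1$. Once this invariance is in place, Propositions \ref{prop:rotonpp} and \ref{prop:rotonmDw} together give the equality $\alpha^\ast(\mu')=\alpha^\ast(\zeta_a(\mathfrak{u}_{P'}))$, where $\mu'$ is the $b$-Stirling permutation attached to $P'$, and the desired conclusion follows by unpacking Proposition \ref{prop:ppmock} to translate this equality back to the $b$-tuple level.
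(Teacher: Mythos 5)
Your overall strategy---scaling the step sequence by $a$, observing that the primitive-subsequence condition (\ref{eqn:condrot}) is homogeneous so that the $(a,b)$-rotation at position $i$ becomes $a$ successive elementary $(1,b)$-rotations on the scaled path, and then applying Propositions \ref{prop:rotonpp} and \ref{prop:rotonmDw} to each of them---is sound, and it is essentially the reduction the paper intends (the paper itself offers only a one-sentence justification). Your first paragraph is correct: the endpoints of the primitive subsequence are unchanged both by termwise multiplication by $a$ and by each unit decrement, and $v_i-v_{i-1}\ge a$ guarantees that all $a$ elementary rotations at position $i$ are legal.

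The gap is in the bookkeeping of your second and third paragraphs. The claim that the labels $l(1),\ldots,l(a)$ are pairwise distinct because $a<b$ is false: the $a$ $\ast$'s immediately to the left of $\epsilon(i)$ need not belong to the same matched pair, and $\ast$'s from different pairs can carry the same type-$I$ label. This already fails on the paper's running example: for $P\in\mathfrak{D}_2^{(2,3)}$ with $\mathfrak{u}_P=(0,1,2,4)$ and $\alpha^{\ast}=(**(**(***)*(***))*)$, the rotation at $i=4$ sees the two $\ast$'s to the left of $\epsilon(4)$ separated by a ``$)$'', and both carry label $1$ (each is the rightmost $\ast$ of its own pair); the two elementary rotations then both act on the single component $a_1=((())())$, taking its step sequence $(0,0,0,2)$ to $(0,0,0,1)$ and then to $(0,0,0,0)$, while $a_2$ and $a_3$ are untouched. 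Consequently the assertion that the $a$ elementary rotations ``act on disjoint components and commute trivially'' is wrong, and so is the asserted invariance of the parameters: in this example $r$ equals $2$ for the first elementary rotation and $1$ for the second. You flag exactly this invariance as ``the main obstacle'' and then dismiss it as immediate from the definitions, but it is precisely where the argument breaks: each elementary rotation reassigns some $\ast$'s to different pairs and hence relabels them, so $n'$, $r$ and $l_{\ast}$ must be recomputed from the intermediate parenthesis presentation at every step. To complete the proof you would need to show that the composite of these $a$ single-component rotations---which may hit the same component several times, at shifting positions---coincides with the operation defined before the proposition, rather than deducing this from a disjointness that does not hold.
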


\begin{example}
Let $P$ be a rational Dyck path in $\mathfrak{D}_{2}^{(2,3)}$ 
as in Figure \ref{fig:rDyck}, and set $i=2$.
The step sequences for $b$ Dyck paths $\mathfrak{a}:=(q_1,q_2,q_3)$ 
are 
\begin{align*}
\mathfrak{u}(q_1)=(0,0,0,2), \quad
\mathfrak{u}(q_2)=(0,1,2,3), \quad
\mathfrak{u}(q_3)=(0,1,2,3). \quad
\end{align*}
The parenthesis presentation for $P$ is 
\begin{align*}
(**\underline{(}**(***)\underline{*}(***))*),
\end{align*}
where the second left parenthesis and its right-most $\ast$ are underlined.
From this, the size of the rotation is two.
Since $a=2$ and the first two $\ast$'s in the parenthesis presentation 
of type $I$ have labels $3$ and $2$, we perform 
a rotation on the second and the third Dyck paths in $\mathfrak{a}$.
Then, we obtain three step sequences $\mathfrak{u}(q'_{j})$:
\begin{align*}
\mathfrak{u}(q'_{1})=(0,0,0,2), \quad
\mathfrak{u}(q'_{2})=(0,0,1,3), \quad
\mathfrak{u}(q'_{3})=(0,0,1,3).
\end{align*}
Note that $\mathfrak{u}(q'_{i})$, $1\le i\le 3$, satisfy 
$\mathfrak{u}(q'_{3})\le \mathfrak{u}(q'_{2})\le\mathfrak{u}(q'_{1})$ in 
the Young order.
It is easy to see that triple Dyck paths $(q'_1,q'_2,q'_3)$ can be obtained from 
the Dyck path $P'$ whose step sequence is $(0,0,1,4)$.
The path $P'$ is obtained from $P$ by the rotation at position $2$.
Therefore, these two Dyck paths satisfy $P\lessdot_{\mathrm{rot}} P'$.
\end{example}

\section{Binary trees}
\label{sec:bt}
\subsection{binary trees}
A {\it complete binary tree} is a rooted tree such that each internal 
node has exactly two children as in Section \ref{sec:mptree}.
By abuse of notation, we call a rooted tree such that each node has at most 
two children a {\it binary tree}.
An edge of a binary tree is called a left edge or a right edge.
We denote by $\mathfrak{BT}_{n}$ the set of binary trees with $n$ edges,
or equivalently with $n+1$ nodes.

Following \cite{PreRatVie17}, we construct a word $\omega(T)$ on the alphabet $\{1,\bar{1},2,\bar{2}\}$ 
for a given binary tree $T\in\mathfrak{BT}_{n}$. 
We walk around a binary tree $B$ from the root by counterclockwise. 
When we walk on the left edge for the first time, we write a letter $1$, and 
when we walk on the left edge for the second time, we write a letter $\bar{1}$.
Similarly, when we walk on the right edge for the first (resp. second) time,
we write a letter $2$ (resp. $\bar{2}$).
We denote by $\omega(T)$ the word obtained as above.

We construct two words $\omega_1(T)$ and $\omega_2(T)$ from $\omega(T)$.
The word $\omega_1(T)$ is obtained from $\omega(T)$ by deleting $1$'s and $2$'s.
Similarly, the word $\omega_2(T)$ is obtained from $\omega(T)$ by deleting $1$'s and $\bar{2}$'s.
By replacing $\bar{1}$ by $N$ and $2$ or $\bar{2}$ by $E$, we obtain 
a path $\omega_1(T)$ and $\omega_2(T)$.

We will define a map 
$B:\mathfrak{D}_{n}^{(a,b)}\times\mathfrak{D}_{n}^{(a,b)}\rightarrow \mathfrak{BT}_{(a+b)n}$,
$(Q,P)\mapsto T$ with $P\le_{Y}Q$ in the Young order.
We denote $B(Q,P)$ for $Q=P$ by $B(P)$.
Suppose that $P$ is written in terms of an up step $N$ and a right step $E$ as 
\begin{align}
P=N^{i_1}E^{j_1}\ldots N^{i_r}E^{j_r}.
\end{align}
It is easy to define a binary tree for $N^{i}E^{j}$ as 
\begin{align}
N^{i}E^{j} \leftrightarrow
\tikzpic{-0.5}{[x=0.4cm,y=0.4cm]
\draw (1,-1)--(2.2,0.2)(3.4,1.4)--(4.6,2.6)node{$\bullet$}--(5.8,1.4)(7,0.2)--(8.2,-1);
\draw(2,0)node{$\bullet$}(3.6,1.6)node{$\bullet$}(5.6,1.6)node{$\bullet$}(7.2,0)node{$\bullet$};
\draw[dashed](2.2,0.2)--(3.4,1.4)(5.8,1.4)--(7,0.2);
\draw[decoration={brace,mirror,raise=5pt},decorate](4.6,2.6)--(1.1,-0.9);
\draw[decoration={brace,mirror,raise=5pt},decorate](8.1,-0.9)--(4.6,2.6);
\draw(7.5,2)node{$j$}(1.6,2)node{$i$};
}.
\end{align}
Note that the number of edges is $i+j$.
We define a binary tree for $P$ recursively as follows.
Suppose that $P'=N^{i_1}E^{j_1}\ldots N^{i_{r-1}}E^{j_{r-1}}$ and we have a 
corresponding binary tree $\mathrm{Tr}(P')$.
Then, the binary tree $\mathrm{Tr}(P)$ is obtained from $\mathrm{Tr}(P')$ by attaching 
the tree for $\mathrm{Tr}(P')$ at the leftmost leaf of $N^{i_r}E^{j_r}$.

The following lemma is an easy consequence of the construction of a binary tree 
$\mathrm{Tr}(P)$ for $P$.
Recall $\omega_{i}(\mathrm{Tr}(P))$ for $i=1,2$ are the words obtained from $\mathrm{Tr}(P)$. 
\begin{lemma}
\label{lemma:bt1}
Let $T:=\mathrm{Tr}(P)$ be a binary tree for $P$.
We have $\omega_{1}(T)=\omega_{2}(T)=P$.
\end{lemma}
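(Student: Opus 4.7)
The plan is to induct on the number $r$ of blocks in the factorization $P = N^{i_1}E^{j_1}\cdots N^{i_r}E^{j_r}$, proving the stronger statement that the full alphabet word satisfies
\begin{align*}
\omega(\mathrm{Tr}(P)) \;=\; 1^{i_r}\,\omega(\mathrm{Tr}(P'))\,\bar{1}^{i_r}\, 2^{j_r}\,\bar{2}^{j_r},
\end{align*}
where $P' := N^{i_1}E^{j_1}\cdots N^{i_{r-1}}E^{j_{r-1}}$. Once this recursion is in hand, the two conclusions follow immediately: deleting $\{1,2\}$ (for $\omega_1$) or $\{1,\bar{2}\}$ (for $\omega_2$) strips the leading $1^{i_r}$ entirely, preserves the induced projection of $\omega(\mathrm{Tr}(P'))$, and leaves $\bar{1}^{i_r}\bar{2}^{j_r}$ or $\bar{1}^{i_r}2^{j_r}$ at the tail. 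Either projection, under the substitution $\bar{1}\mapsto N$ and $\{\bar{2},2\}\mapsto E$, yields $\omega_j(\mathrm{Tr}(P')) \cdot N^{i_r}E^{j_r}$, which equals $P'\cdot N^{i_r}E^{j_r}=P$ by the inductive hypothesis $\omega_1(\mathrm{Tr}(P'))=\omega_2(\mathrm{Tr}(P'))=P'$.

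For the base case $r=1$, the tree $\mathrm{Tr}(N^{i_1}E^{j_1})$ is the caterpillar drawn in the excerpt, with an $i_1$-chain of left edges terminating at the leftmost leaf and a $j_1$-chain of right edges terminating at the rightmost leaf. A direct counterclockwise traversal descends the left spine (writing $1^{i_1}$), reascends (writing $\bar{1}^{i_1}$), descends the right spine (writing $2^{j_1}$), and reascends (writing $\bar{2}^{j_1}$), giving $\omega(\mathrm{Tr}(P)) = 1^{i_1}\bar{1}^{i_1}2^{j_1}\bar{2}^{j_1}$. Both projections then recover $N^{i_1}E^{j_1} = P$ by inspection.

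For the inductive step, I would read the recursive construction as grafting $\mathrm{Tr}(P')$ onto the leftmost leaf of the outer caterpillar $\mathrm{Tr}(N^{i_r}E^{j_r})$, identifying that leaf with the root of $\mathrm{Tr}(P')$. The counterclockwise walk on $\mathrm{Tr}(P)$, starting at the root of the outer caterpillar, then splits as follows: it descends the $i_r$ outer left edges (contributing $1^{i_r}$), arrives at the grafted root and performs the entire counterclockwise walk of $\mathrm{Tr}(P')$ (contributing $\omega(\mathrm{Tr}(P'))$), reascends the outer left spine ($\bar{1}^{i_r}$), and finally descends and reascends the outer right spine ($2^{j_r}\bar{2}^{j_r}$). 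This is exactly the claimed recursive formula.

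The only point that demands care is parsing the grafting step precisely — namely, verifying that the counterclockwise walk on the combined tree visits the subtree $\mathrm{Tr}(P')$ in a single uninterrupted segment between the descending and ascending passes of the outer left spine, and in particular that no other portion of the outer walk is interleaved. This follows from the standard fact that a counterclockwise Euler tour on a tree visits each subtree contiguously when entered, so it is not a genuine obstacle; once the decomposition of $\omega$ is established, the remainder of the argument is routine bookkeeping on the filtered words.
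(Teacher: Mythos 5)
Your proof is correct. The paper gives no proof of this lemma (it is introduced only as ``an easy consequence of the construction of a binary tree $\mathrm{Tr}(P)$''), and your induction on the number of blocks via the contour-word recursion $\omega(\mathrm{Tr}(P))=1^{i_r}\,\omega(\mathrm{Tr}(P'))\,\bar{1}^{i_r}\,2^{j_r}\,\bar{2}^{j_r}$, followed by the two projections, is exactly that omitted argument made explicit and matches the paper's conventions (as one can check against the worked example with $\omega_1,\omega_2$).
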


We start from the binary tree $T:=\mathrm{Tr}(P)$ such that $\omega_{i}(T)=P$ for $i=1,2$, 
and construct a binary tree $B(Q,P)$ such that 
$\omega_{1}(B(Q,P))=Q$ and $\omega_{2}(B(Q,P))=P$.

Given two rational Dyck paths $P$ and $Q$ expressed in terms of words 
consisting of $N$ and $E$, we introduce a notion of 
{\it rotation} on $P$ to get $Q$.

Let $P=(p_1,\ldots,p_n)\in\{N,E\}^{n}$ and $Q=(q_1,\ldots,q_n)\in\{N,E\}^{n}$ be two 
rational Dyck paths, and $\mathrm{Tr}(P)$ be the corresponding binary tree for $P$.
By definition, we have $p_1=q_1=N$.
Let $k_0$ be the smallest integer such that 
\begin{align*}
&p_{l}=q_{l}, \quad \forall l\in[1,k_{0}-1], \\
&p_{k_{0}}\neq q_{k_{0}}.
\end{align*}
Since $p_1=q_1=N$ and $P<_{Y}Q$, we have $p_{k_{0}}=E$ and $q_{k_{0}}=N$.
Let $k_1$ and $k_2$ be a positive integer such that 
\begin{align*}
&p_{l}=E,\quad \forall l\in[k_{0},k_{1}-1], \\
&p_{k_{1}}=N,
\end{align*}
and 
\begin{align*}
&p_{l}=E,\quad \forall l\in[k_{1}+1,k_{2}-1], \\
&p_{k_{2}}=N.
\end{align*}
Let $e_{0}$ be the right edge in $\mathrm{Tr}(P)$ corresponding to $p_{k_{0}}=E$, 
which is $\overline{2}$ in the reading word $\omega_2(T)$.
Then, let $\mathrm{Tr}'(P')$ be the partial binary tree in $\mathrm{Tr}(P)$ corresponding to 
the partial path $P'=(p_{k_{1}},\ldots,p_{k_{2}-1})=NE\ldots E$.
We cut $\mathrm{Tr}(P)$ into four pieces. 
First we have $\mathrm{Tr}'(P')$. 
We denote by $B_{r}$ and $B_{t}$ the two pieces 
in $\mathrm{Tr}(P)\setminus \mathrm{Tr}'(P')$
such that $B_{r}$ contains the root of $\mathrm{Tr}(P)$.
We further divide into $B_{t}$ into two pieces.
Since the binary tree $B_{t}$ contains the right edge $e_{0}$, 
we have two binary trees $B^{(0)}_{t}$ and $B^{(1)}_{t}$ 
such that $B^{(0)}_{t}$ contains the edge $e_{0}$ and $B^{(1)}_{t}$
is the binary tree below $e_{0}$.

First, we glue the four binary trees $B_{r}$, $\mathrm{Tr}'(P')$, $B^{(0)}_{t}$ and 
$B^{(1)}_{t}$ into a binary tree $\mathrm{Tr}(P_1)$.
We glue the root of $B^{(0)}_t$ and the left-most leaf of $B_{r}$ 
into a binary tree and denote it by $B_{r}^{\mathrm{new}}$.
Secondly, we glue $B_{r}^{\mathrm{new}}$, $\mathrm{Tr}'(P')$ and $B_{t}^{(1)}$ 
into a binary tree such that we attach the right leaf of the 
edge $e_0$ in $B_{r}^{\mathrm{new}}$ and the root of $\mathrm{Tr}'(P')$, and attach the left-most leaf of 
$\mathrm{Tr}'(P')$ and the root of $B_{t}^{(1)}$.
We denote by $B(P_1,P)$ the newly obtained binary tree.
We call the operation to get $B(P_1,P)$ from $\mathrm{Tr}(P)$ {\it rotation} on
a binary tree.
Note that we have $P<_{Y}P_1<_{Y}Q$.
By successively applying rotations on $B(P)$, 
we obtain the binary tree $B(Q,P)$ from $B(P)$.

\begin{prop}
Given a binary tree $B(Q,P)$ constructed by successive rotations, we have 
two binary words $\omega_{1}(B(Q,P))=Q$ and $\omega_{2}(B(Q,P))=P$
from $B(Q,P)$.
\end{prop}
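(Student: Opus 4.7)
The plan is to induct on the number of rotations used to construct $B(Q,P)$ from $\mathrm{Tr}(P)$. The base case is immediate: when no rotation has been applied, $B(P,P) = \mathrm{Tr}(P)$, and Lemma \ref{lemma:bt1} gives $\omega_1(\mathrm{Tr}(P)) = \omega_2(\mathrm{Tr}(P)) = P$. For the inductive step, I would write $B(Q,P)$ as the output of one final rotation applied to a tree $T_k := B(P_k,P)$ with $P \le_Y P_k <_Y Q$, and assume inductively that $\omega_1(T_k) = P_k$ and $\omega_2(T_k) = P$. It then remains to show that the next rotation produces $T_{k+1} = B(P_{k+1},P)$ with $\omega_1(T_{k+1}) = P_{k+1}$ and $\omega_2(T_{k+1}) = P$; the full result follows once $P_k$ reaches $Q$, which happens after finitely many steps because each rotation strictly advances $P_k$ in the Young order toward $Q$.

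The main step is therefore to analyze how one rotation alters the two reading words. The rotation cuts $T_k$ into the four pieces $B_r$, $\mathrm{Tr}'(P')$, $B_t^{(0)}$, $B_t^{(1)}$ along the distinguished right edge $e_0$ (the edge contributing the letter $\bar{2}$ to $\omega_1(T_k)$ at position $k_0$) and along the boundary of $\mathrm{Tr}'(P')$, then reattaches $B_t^{(0)}$ at the leftmost leaf of $B_r$ and hangs $\mathrm{Tr}'(P') \cup B_t^{(1)}$ beneath $e_0$. For $\omega_2$, which records the second visit of each left edge and the first visit of each right edge, I would verify piece by piece that the new counterclockwise walk produces these ``exit events'' in the same cyclic order as in $T_k$, so the output word is unchanged and still equals $P$. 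For $\omega_1$, which records the second visit of every edge, the contribution of $e_0$ is pushed later in the word by exactly the number of letters coming from $\mathrm{Tr}'(P') \cup B_t^{(1)}$, and a $\bar{1}$ from $B_t^{(0)}$ is pulled forward into position $k_0$; this matches the positional change from $P_k$ to $P_{k+1}$ prescribed by the indices $k_0,k_1,k_2$, giving $\omega_1(T_{k+1}) = P_{k+1}$.

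The hardest part will be the letter-by-letter bookkeeping under the regluing: one must check not only that the two reading words come out with the correct total content, but that every single second-visit of a left edge and first-visit (resp.\ second-visit) of a right edge in $T_{k+1}$ sits at the position predicted by $P$ (resp.\ $P_{k+1}$). I expect this will require separating the irreducible and reducible rotations in the sense of Definition \ref{defn:adrotDp}, since a reducible rotation factors through irreducible ones and the irreducible case can be verified directly by reading each of the four pieces against its contribution to $\omega_1$ and $\omega_2$. Once the effect of a single irreducible rotation on both readings is established, iterating from $\mathrm{Tr}(P)$ up to $B(Q,P)$ along any sequence of rotations yields $\omega_1(B(Q,P)) = Q$ and $\omega_2(B(Q,P)) = P$.
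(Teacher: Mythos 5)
Your overall strategy is the paper's: induct on the rotations taking $\mathrm{Tr}(P)$ to $B(Q,P)$, settle the base case with Lemma~\ref{lemma:bt1}, and show that each rotation leaves $\omega_2$ unchanged while advancing $\omega_1$ by one Young cover, by tracking how the reading words decompose over the four pieces $B_r$, $\mathrm{Tr}'(P')$, $B_t^{(0)}$, $B_t^{(1)}$. The gap is in the $\omega_1$ bookkeeping you sketch, which does not describe what the regluing actually does and, carried out literally, would produce the wrong word. After the surgery, $\mathrm{Tr}'(P')$ hangs below the right leaf of $e_0$ and $B_t^{(1)}$ hangs below the left-most leaf of $\mathrm{Tr}'(P')$, so the $\omega_1$-reading changes from
\begin{align*}
&B_{t}^{(0)}\setminus\{e_0\}\rightarrow B_{t}^{(1)}\rightarrow e_{0}\rightarrow \mathrm{Tr}'(P')\rightarrow B_{r}
\qquad\text{to}\\
&B_{t}^{(0)}\setminus\{e_0\}\rightarrow B_{t}^{(1)}\rightarrow \mathrm{Tr}'(P')\rightarrow e_{0}\rightarrow B_{r}.
\end{align*}
That is, the letter of $e_0$ is displaced past the block contributed by $\mathrm{Tr}'(P')=NE^{n}$ only --- not past $\mathrm{Tr}'(P')\cup B_t^{(1)}$, since the letters of $B_t^{(1)}$ precede $e_0$ both before and after the rotation --- and the letter pulled into position $k_0$ is the $\bar 1$ of the root edge of $\mathrm{Tr}'(P')$, not a $\bar 1$ from $B_t^{(0)}$. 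This is exactly what makes the local change $E\cdot NE^{n}\mapsto NE^{n}\cdot E$ a single Young cover matching $P_k\lessdot_Y P_{k+1}$; with the displacement you describe the resulting word would in general not be a cover of $P_k$ and would not equal the prescribed $P_{k+1}$.

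A smaller point: the proposed reduction to irreducible rotations in the sense of Definition~\ref{defn:adrotDp} is a detour you do not need and cannot quite invoke, since that definition concerns the rotation of Dyck \emph{words}, not the tree surgery used here. Because $\mathrm{Tr}'(P')$ always has the form $NE^{n}$, every tree rotation already realizes exactly one Young cover, so the induction runs over covers directly, as in the paper.
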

%%%%%%%%%%
\begin{proof}
Let $T_1$ and $T_2$ be binary trees.
We write $T_1\rightarrow T_2$ if we concatenate two words 
$w_{i}(T_1)$ and $w_{i}(T_2)$ as $w_{i}(T_1)\circ w_{i}(T_2)$ 
for $i=1,2$.

We prove the statement by induction. 
When $Q=P$, we have $\omega_1(B(P,P))=P$ from Lemma \ref{lemma:bt1}.
We assume that $\omega_1(B(P_1,P))=P_1$ and take $P_2$ as a Dyck path 
such that $P_1\lessdot_{Y}P_2$. 
The difference of $P_1$ and $P_2$ is at position $k_0$.
We compare two words $\omega_1(B(P_1,P))$ and $\omega_1(B(P_2,P))$
such that $P_1\lessdot_{Y}P_2$.
Let $e_0$, $B_{r}$, $B_{t}^{(0)}$, $B_{t}^{(1)}$ and $\mathrm{Tr}'(P')$
be as above.
The word $w_{1}(B(P_1,P))$ is given by 
\begin{align*}
B_{t}^{(0)}\setminus\{e_0\}\rightarrow B_{t}^{(1)}\rightarrow e_{0}
\rightarrow \mathrm{Tr}'(P')\rightarrow B_{r}.
\end{align*}
Similarly, the word $w_{1}(B(P_2,P))$ is given by 
\begin{align*}
B_{t}^{(0)}\setminus\{e_0\}\rightarrow B_{t}^{(1)}
\rightarrow \mathrm{Tr}'(P')\rightarrow e_{0}\rightarrow B_{r}.
\end{align*}
Note that $\mathrm{Tr}'(P')=NE^{n}$ with a non-negative integer $n$.
Thus, we have $w_{1}(B(P_1,P))\lessdot_{Y}w_{1}(B(P_2,P))$.
By the choice of $e_0$, it is obvious that if $w_{1}(B(P_1,P))=P_1$,
then $w_{1}(B(P_2,P))=P_2$ by induction assumption.

By a similar observation as above, the words $\omega_2(B(P_{1},P))$ and 
$\omega_2(B(P_2,P))$ are given by 
\begin{align*}
B_{t}^{(0)}\rightarrow B_t^{(1)}\rightarrow \mathrm{Tr}'(P')
\rightarrow B_{r},
\end{align*}
which implies $\omega_2(B(P_{1},P))=\omega_2(B(P_2,P))=P$.
This completes the proof.
\end{proof}

\begin{example}
Let $P=NENENE^2NE^2$ and $Q=NEN^2E^2NE^3$ are two rational Dyck paths 
in $\mathfrak{D}_{2}^{(2,3)}$. 
We denote by $P_1=NEN^2E^3NE^2$ an intermediate path such that 
$P\lessdot_{Y}P_1\lessdot_{Y}Q$ in the Young order. 
The left picture of Figure \ref{fig:BT3} is the binary tree $\mathrm{Tr}(P)$.
By applying a rotation on $e_{0}$, we obtain the binary tree $B(P_1,P)$.
Further application of rotation on $e_{0}$ in $B(P_1,P)$, 
we obtain the binary tree $B(Q,P)$. 
\begin{figure}[ht]
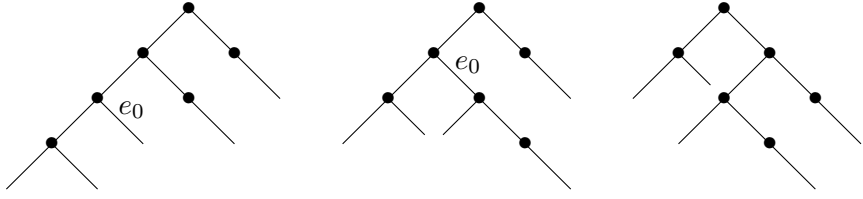

\tikzpic{-0.5}{[scale=0.6]
\draw(0,0)--(-4,-4)(0,0)--(2,-2)(-1,-1)--(1,-3)(-2,-2)--(-1,-3)(-3,-3)--(-2,-4);
\draw(0,0)node{$\bullet$}(-1,-1)node{$\bullet$}(-2,-2)node{$\bullet$}
(-3,-3)node{$\bullet$}(1,-1)node{$\bullet$}(0,-2)node{$\bullet$};
\draw(-1.25,-2.25)node{$e_{0}$};
}\quad
\tikzpic{-0.5}{[scale=0.6]
\draw(0,0)--(-3,-3)(0,0)--(2,-2)(-1,-1)--(2,-4)
(-2,-2)--(-1.2,-2.8)(0,-2)--(-0.8,-2.8);
\draw(0,0)node{$\bullet$}(-1,-1)node{$\bullet$}(-2,-2)node{$\bullet$}
(1,-3)node{$\bullet$}(1,-1)node{$\bullet$}(0,-2)node{$\bullet$};
\draw(-0.25,-1.25)node{$e_{0}$};
}\quad
\tikzpic{-0.5}{[scale=0.6]
\draw(0,0)--(-2,-2)(0,0)--(3,-3)(-1,-1)--(-0.3,-1.7)(1,-1)--(-1,-3)
(0,-2)--(2,-4);
\draw(0,0)node{$\bullet$}(-1,-1)node{$\bullet$}(2,-2)node{$\bullet$}
(1,-3)node{$\bullet$}(1,-1)node{$\bullet$}(0,-2)node{$\bullet$};
}
\caption{The binary trees $\mathrm{Tr}(P)$, $B(P_1,P)$ and $B(Q,P)$ from 
left to right.}
\label{fig:BT3}
\end{figure}
From the binary tree $T:=B(Q,P)$, we have
\begin{align*}
&\omega_{1}(T)=\overline{1211221222}=Q, \\
&\omega_{2}(T)=\overline{1}2\overline{1}2\overline{1}22\overline{1}22=P.
\end{align*}
\end{example}

\subsection{Horizontal and vertical strip-decomposition}
Let $P\in\mathfrak{D}_{n}^{(a,b)}$ be a rational Dyck path.
Dyck paths obtained by horizontal and vertical strip-decomposition
are directly obtained from $P$ as follows.
Suppose $P=N^{i_1}E^{j_1}\ldots N^{i_r}E^{j_r}$ by an integer $r$ 
and denote it by $P:=P[i_1,j_1;\ldots;i_r,j_r]$.
We define an {\it $(a,b)$-enlarged} rational Dyck path $\widetilde{P}$ 
by 
\begin{align}
\widetilde{P}=P[bi_1,aj_1;\ldots;bi_r,aj_r].
\end{align}
The Dyck path $\widetilde{P}$ is of size $2abn$, that is, $\mathfrak{D}_{2abn}^{(1,1)}$.
We will define an $r$-tuple of Dyck paths with an integer $r\in\{a,b\}$, which is denoted 
by $\widetilde{P^{(r)}}:=\{\widetilde{P^{(r)}_{i}}\}_{i=1}^{r}$ 
where each $\widetilde{P^{(r)}_{i}}$ is a Dyck path of length $2abn/r$.

\begin{defn}
Let $\widetilde{P}:=\overline{p(1)}\overline{p(2)}\ldots\overline{p(2abn)}\in\{N,E\}^{2abn}$ 
be a Dyck word presentation of the $(a,b)$-enlarged Dyck path. 
We define an $r$-tuple of Dyck paths $\overline{P^{(r)}}$ denoted as above by
\begin{align}
\widetilde{P^{(r)}_{i}}:=\overline{p(i)}\overline{p(r+i)}\ldots\overline{p(2abn-r+i)}.
\end{align}
\end{defn}

\begin{example}
We consider the path $NENEE\in\mathfrak{D}_{1}^{(2,3)}$.
The $(2,3)$-enlarged Dyck path $\widetilde{P}$ is given 
by $N^{3}E^{2}N^{3}E^{4}$.
Therefore, we have 
\begin{align*}
\widetilde{P^{(3)}}=(NENE,NENE,NNEE), \\
\widetilde{P^{(2)}}=(NNENEE,NENNEE).
\end{align*}
\end{example}

An $(a,b)$-enlarged Dyck path, and horizontal and vertical strip-decomposition 
of a path $P$ are related in the following way.
\begin{prop}
\label{prop:lbDw}
Let $P\in\mathfrak{D}_{n}^{(a,b)}$, $\delta$ and $\theta$ be the maps defined 
in Definition \ref{defn:stripdec} and \ref{defn:vsd}.
Then, we have 
\begin{align}
&\widetilde{P^{(b)}}=\delta(P), \\
&\widetilde{P^{(a)}}=\theta(P).
\end{align}
\end{prop}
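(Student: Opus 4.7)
The plan is to verify both identities by computing the height sequence (resp.\ step sequence) of each sub-path $\widetilde{P^{(b)}_i}$ (resp.\ $\widetilde{P^{(a)}_i}$) directly from the run structure $\widetilde{P}=N^{bi_1}E^{aj_1}\cdots N^{bi_r}E^{aj_r}$, and match it with the sequence $\mathfrak{H}_i$ (resp.\ $\overline{\mathfrak{u}}_i(P)$) prescribed by Definition \ref{defn:stripdec} (resp.\ Definition \ref{defn:vsd}).

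The first step is a uniformity observation. Since every $N$-run of $\widetilde{P}$ has length $bi_k$, a multiple of $b$, exactly $i_k$ of its positions are congruent to any given $i\in[1,b]$ modulo $b$. Thus each $N$-run of $\widetilde{P}$ contributes the same number $i_k$ of letters to every $\widetilde{P^{(b)}_i}$. Summing over $k$ gives $an$ $N$'s in each $\widetilde{P^{(b)}_i}$, and hence $an$ $E$'s as well, so $\widetilde{P^{(b)}_i}\in\mathfrak{D}_{an}^{(1,1)}$ as required. A symmetric count on $E$-runs of length $aj_k$ shows $\widetilde{P^{(a)}_i}\in\mathfrak{D}_{bn}^{(1,1)}$.

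For $\widetilde{P^{(b)}}=\delta(P)$, I would compare height sequences entry by entry. The height in $\widetilde{P^{(b)}_i}$ of its $s$-th $E$-step is the number of preceding $N$-letters in $\widetilde{P^{(b)}_i}$; by uniformity this equals $\sum_{l\le k}i_l$, where the $s$-th $E$-letter of $\widetilde{P^{(b)}_i}$ lies in the $k$-th $E$-run of $\widetilde{P}$. This sum equals $h_j$ for any $j$ such that the $j$-th $E$ of $P$ lies in the $k$-th $E$-run of $P$. On the other hand, the $s$-th entry of $\mathfrak{H}_i$ is $H_{i+(s-1)b}=h_{\lceil(i+(s-1)b)/a\rceil}$ by the recipe $H_{(j-1)a+k'}=h_j$. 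The remaining task is to verify that the $\lceil(i+(s-1)b)/a\rceil$-th $E$ of $P$ lies in the same $E$-run of $P$ as the one whose $\widetilde{P}$-expansion contains the $s$-th $E$-letter of $\widetilde{P}$ at positions congruent to $i$ modulo $b$. This is a direct residue-counting argument using that each $E$ of $P$ expands to a block of $a$ consecutive $E$-positions in $\widetilde{P}$ and $\gcd(a,b)=1$.

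The identity $\widetilde{P^{(a)}}=\theta(P)$ follows by the symmetric argument on step sequences: each $E$-run of $\widetilde{P}$ contributes $j_k$ letters uniformly to every $\widetilde{P^{(a)}_i}$, and the $x$-coordinate of the $s$-th $N$-step of $\widetilde{P^{(a)}_i}$ equals $\sum_{l<k}j_l=u_m$ for a suitable $m$, which one matches with $U_{a(s-1)+i}$ via the analogous residue bookkeeping modulo $a$. The main obstacle is this index identification in both parts; once isolated, the rest reduces to elementary arithmetic on residues, and the structural uniformity observation carries all the geometric content.
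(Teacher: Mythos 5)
Your proposal is correct and follows essentially the same route as the paper, which simply observes that the $(a,b)$-enlargement realizes the $a$-fold (resp.\ $b$-fold) repetition of entries in the height (resp.\ step) sequence used to define $\delta$ and $\theta$; you have just made the run-structure and index bookkeeping explicit. The one step you defer does close: writing $J_k:=j_1+\cdots+j_k$, both the number of $E$'s of $\widetilde{P^{(b)}_i}$ lying in the first $k$ $E$-runs and the number of $s$ with $\lceil(i+(s-1)b)/a\rceil\le J_k$ equal $\lfloor(aJ_k-i)/b\rfloor+1$, so the two run-assignment functions agree for every $s$ (and the symmetric count handles $\theta$).
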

%%%%%%%%%%
\begin{proof}
Recall that we repeat the entries of the step sequence $\mathfrak{u}_{P}$ 
$a$ times to construct the vertical strip-decomposition $\theta(P)$.
This repeat of entries is realized by the enlargement of the Dyck path.
Therefore, we have  $\widetilde{P^{(a)}}=\theta(P)$.
One can show $\widetilde{P^{(b)}}=\delta(P)$ by a similar argument.
\end{proof}

\subsection{parenthesis presentation of type \texorpdfstring{$II$}{II} and binary tree}
In this subsection, we show that the parenthesis presentation of type $II$ is compatible
with a binary tree. This gives a decomposition of the binary tree into $b$ binary trees.

Let $P\in\mathfrak{D}_{n}^{(a,b)}$ and $P_0$ be the lowest path in $\mathfrak{D}_{n}^{(a,b)}$.
Let $\mathfrak{B}:=B(P,P_0)$ be the binary tree associated 
with the pair $(P,P_0)$.
We divide the left edges of $\mathfrak{B}$ into $b$ pieces,
and attach $(a-1)bn$ right edges at the right-most node. 
We denote by $\widetilde{\mathfrak{B}}$ the new binary tree.
We will construct two $b$-tuples of Dyck words 
$\mathfrak{d}^{(i)}:=(\mathfrak{d}^{(i)}_1,\ldots,\mathfrak{d}^{(i)}_{b})$ 
for $i=1,2$ from $\widetilde{B}$.
Recall we have two words $\omega_1(\widetilde{B}):=(\omega_{1}(1),\ldots,\omega_{1}(2abn))$ 
and $\omega_2(\widetilde{B}):=(\omega_{2}(1),\ldots,\omega_{2}(2abn))$.
We define $\mathfrak{d}^{i}$ by 
\begin{align}
\mathfrak{d}^{(i)}_{j}:=(\omega_{i}(b+1-j),\omega_{i}(2b+1-j),\ldots,\omega_{i}(2abn+1-j)),
\end{align}
where $i=1,2$ and $j\in[1,b]$.

\begin{prop}
Let $\mathfrak{d}^{(i)}$ with $i=1,2$ be two $b$-tuples of Dyck words defined as above.
Then, we have 
\begin{enumerate}
\item The word $\mathfrak{d}^{(1)}$ is equal to a word presentation of 
parenthesis presentation type II for $P$, namely, $\mathfrak{d}^{(1)}=\alpha^{II}(P)$. 
\item The word $\mathfrak{d}^{(2)}$ is equal to the lowest path $P_{0}$, namely, 
$\mathfrak{d}^{(2)}=\alpha^{II}(P_{0})$.
\end{enumerate}
\end{prop}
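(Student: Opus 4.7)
My plan is to identify the two reading words of the enlarged binary tree $\widetilde{\mathfrak{B}}$ with the enlargements $\overline{P}$ and $\overline{P_{0}}$ defined in Section~\ref{sec:mockDw}, and then apply Proposition~\ref{prop:mock1} to conclude. First I would recall from Section~\ref{sec:bt} that $\omega_{1}(\mathfrak{B})=P$ and $\omega_{2}(\mathfrak{B})=P_{0}$.

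Next I would track how the two modifications that produce $\widetilde{\mathfrak{B}}$ from $\mathfrak{B}$ act on each reading word. A left edge is traversed twice in the counterclockwise walk, contributing one $1$ and one $\bar{1}$; when subdivided into $b$ left edges in a chain, these contributions become $1^{b}$ and $\bar{1}^{b}$. Since both $\omega_{1}$ and $\omega_{2}$ keep only $\bar{1}$ (read as $N$) among left-edge letters, the net effect is to substitute every $N$ by $N^{b}$ in both reading words. The $(a-1)bn$ new right edges appended as a chain at the rightmost node are the last features visited by the counterclockwise walk, and each right edge contributes one $E$ to both reading words (via $\bar{2}$ for $\omega_{1}$ and via $2$ for $\omega_{2}$), so $\omega_{1}(\widetilde{\mathfrak{B}})$ and $\omega_{2}(\widetilde{\mathfrak{B}})$ both acquire a suffix $E^{(a-1)bn}$.

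Combining these observations, if $P=N^{i_{1}}E^{j_{1}}\ldots N^{i_{r}}E^{j_{r}}$ then
\begin{align*}
\omega_{1}(\widetilde{\mathfrak{B}})=N^{bi_{1}}E^{j_{1}}\ldots N^{bi_{r}}E^{j_{r}}E^{(a-1)bn},
\end{align*}
which is exactly the enlargement $\overline{P}$ from Section~\ref{sec:mockDw}, and the same analysis applied to $P_{0}$ gives $\omega_{2}(\widetilde{\mathfrak{B}})=\overline{P_{0}}$. The formula defining $\mathfrak{d}^{(i)}_{j}$ selects every $b$-th letter of $\omega_{i}(\widetilde{\mathfrak{B}})$ with offset $j$, which coincides with the prescription of Eqn.~(\ref{eqn:Ptomock}) for extracting $\overline{\mathfrak{p}}_{j}$ from $\overline{P}$. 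Hence $\mathfrak{d}^{(1)}_{j}$ equals the $j$-th component of the $b$-tuple derived from $\overline{P}$, and by Proposition~\ref{prop:mock1} this tuple is precisely $\alpha^{II}(P)$; the identical argument applied to $\omega_{2}$ and $\overline{P_{0}}$ yields $\mathfrak{d}^{(2)}=\alpha^{II}(P_{0})$.

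The hard part will be the second step, in particular checking that the chain of $(a-1)bn$ right edges grafted at the rightmost node contributes exactly to the tail of both reading words and not to their interior. This depends on the correct interpretation of ``the rightmost node'': since the counterclockwise walk reaches it only at the very end, a chain of right edges attached there is traversed last, and the extra letters occupy precisely the final $2(a-1)bn$ positions of $\omega(\widetilde{\mathfrak{B}})$ before the $\omega_{i}$-filtering is applied. Once this placement is settled, the remainder is a routine matching of the two indexing conventions.
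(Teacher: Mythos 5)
Your argument is correct, and it supplies details that the paper's own proof leaves implicit: the paper disposes of this proposition in a single sentence citing the construction of $\alpha^{II}$ in Section \ref{sec:mockDw} together with Theorem \ref{thrm:distPQ} and Proposition \ref{prop:lbDw}, whereas you bypass the strip-decomposition machinery entirely. Your route — show that the tree surgery producing $\widetilde{\mathfrak{B}}$ turns the reading words $\omega_{1}(\mathfrak{B})=P$ and $\omega_{2}(\mathfrak{B})=P_{0}$ into the enlarged words $\overline{P}$ and $\overline{P_{0}}$ of Section \ref{sec:mockDw}, then match the extraction formula for $\mathfrak{d}^{(i)}_{j}$ against Eqn.~(\ref{eqn:Ptomock}) and invoke Proposition \ref{prop:mock1} — is arguably the more natural one here, since the relevant enlargement is $\overline{P}$ (each $N$ replaced by $N^{b}$ plus a terminal block of $E$'s), which is exactly what Proposition \ref{prop:mock1} addresses, while Proposition \ref{prop:lbDw} concerns the different enlargement $\widetilde{P}$ in which both the $N$'s and the $E$'s are multiplied. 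One small imprecision in your write-up: the $(a-1)bn$ appended right edges are traversed just before the walk climbs back up the original right spine, so their letters do not literally occupy the final positions of $\omega(\widetilde{\mathfrak{B}})$; in $\omega_{1}$ the $\bar{2}$'s of the old right spine come after the new ones. This is harmless — the insertion lands inside a maximal terminal run of $E$'s, so as a word $\omega_{1}(\widetilde{\mathfrak{B}})$ still equals the old word with $E^{(a-1)bn}$ appended — but it deserves an explicit sentence, since it is precisely the point you flagged as delicate.
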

%%%%%%%%%
\begin{proof}
By combining the construction of $\alpha^{II}(\pi)$ 
in Section \ref{sec:mockDw}, Theorem \ref{thrm:distPQ} in Section \ref{sec:Hsdmp}, 
and Proposition \ref{prop:lbDw},
we have  $\mathfrak{d}^{(1)}=\alpha^{II}(P)$ 
and $\mathfrak{d}^{(2)}=\alpha^{II}(P_{0})$.
\end{proof}

\subsection{Duality between horizontal and vertical strip-decompositions}
In this subsection, we introduce the notion of duality for rational Dyck paths.
The duality between horizontal and vertical strip-decomposition plays a central 
role when we study the binary trees introduced in Section \ref{sec:bt}.
By the symmetry between $(a,b)$-Dyck paths and $(b,a)$-Dyck paths, we assume 
$a<b$ without loss of generality in this subsection.

Let $P$ be a rational Dyck path in $\mathfrak{D}_{n}^{(a,b)}$. 
When the path $P$ has a Dyck word presentation $P=p_1\ldots p_{(a+b)n}\in\{N,E\}^{(a+b)n}$,
we define the dual rational Dyck path $P^{\sharp}$ by 
\begin{align}
P^{\sharp}:=p^{\sharp}_{(a+b)n}\ldots p_1^{\sharp},
\end{align}
where $E^{\sharp}=N$ and $N^{\sharp}=E$.
The path $P^{\sharp}$ is in $\mathfrak{D}_{n}^{(b,a)}$.
Let $P_0$ be the lowest path below $P$ and $P_0^{\sharp}$ be its dual.

Given the binary tree $\mathfrak{B}:=B(P,P_{0})$, we cut 
left edges into $b$ pieces and right edges into $a$ pieces in $\mathfrak{B}$
and denote by $\widetilde{\mathfrak{B}}$ the new binary tree. 
The new binary tree $\widetilde{\mathfrak{B}}$ has $2abn$ edges in total.
We enumerate the edges in $\widetilde{\mathfrak{B}}$ by the post-order.
We construct $a$ binary trees $\mathfrak{b}_{i}$ with $1\le i\le a$ 
from $\widetilde{\mathfrak{B}}$ as follows.
Fix $i\in[1,a]$.
We delete all the edges except the edges labeled by $ak+1-i$ with $k\in[1,2bn]$.
We have $2bn$ remaining edges. 
We connect these edges keeping its relative positions in $\widetilde{\mathfrak{B}}$ 
and construct a binary tree $\mathfrak{b}_{i}$.

Similarly, given a binary tree $\mathfrak{C}:=B(P^{\sharp},P_0^{\sharp})$, 
we cut left edges into $a$ pieces and right edges into $b$ pieces and denote 
it by $\widetilde{\mathfrak{C}}$ the new binary tree.
We construct $a$ binary trees from $\widetilde{\mathfrak{C}}$ in the same manner 
as in the case of $\widetilde{\mathfrak{B}}$.
We denote by $\mathfrak{c}_{i}$, $1\le i\le a$, the $a$ binary trees 
obtained from $\widetilde{\mathfrak{C}}$.

\begin{theorem}
Let $\widetilde{\mathfrak{B}}$ and $\widetilde{\mathfrak{C}}$  
defined as above, and $\delta$ and $\theta$ the horizontal 
and vertical strip-decomposition. 
\begin{enumerate}
\item Let $\theta(P)=(\theta_1,\ldots,\theta_{a})$ and $\theta(P_{0}):=(\theta^{0}_1,\ldots,\theta^{0}_{a})$
be the vertical decomposition of $P$ and $P_{0}$ respectively.
We have $\mathfrak{b}_{i}=B(\theta_i,\theta^{0}_{i})$ for $1\le i\le a$.
\item 
Let $\mathfrak{u}(P^{\sharp})$ (resp. $\mathfrak{u}(P_{0}^{\sharp})$) be 
the step sequence of $P^{\sharp}$ (resp. $P_0^{\sharp}$), 
and $\pi$ (resp. $\pi_0$) be a multi-permutation associated with 
$\zeta_{b}(\mathfrak{u}(P^{\sharp}))$ (resp. $\zeta_{b}(\mathfrak{u}(P_{0}^{\sharp}))$).
Then, the Dyck words $\omega_1(\mathfrak{c}_{i})$ (resp. $\omega_2(\mathfrak{c}_{i})$) 
for $1\le i\le a$ coincide with the $a$-tuple of Dyck words $\alpha^{II}(\pi)$
(resp. $\alpha^{II}(\pi_0)$).
\item We have the duality between $\mathfrak{b}_{i}$ and $\mathfrak{c}_{i}$ 
for $1\le i\le a$:
\begin{align*}
&\omega_1(\mathfrak{b}_{i})=\omega_1(\mathfrak{c}_{a+1-i})^{\sharp}, \\
&\omega_2(\mathfrak{b}_{i})=\omega_2(\mathfrak{c}_{a+1-i})^{\sharp}.
\end{align*}
\end{enumerate}
\end{theorem}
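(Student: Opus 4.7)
The strategy is to reduce each of (1), (2), (3) to the bookkeeping of the post-order labeling on the subdivided binary trees $\widetilde{\mathfrak{B}}$ and $\widetilde{\mathfrak{C}}$, and then to exploit the symmetry of the dualization $P \mapsto P^{\sharp}$.

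For (1), I would first verify that the subdivision operation is compatible with reading words: since subdividing a left (resp.\ right) edge into $b$ (resp.\ $a$) pieces replaces a letter of $\omega_i$ by that letter repeated, we have $\omega_1(\widetilde{\mathfrak{B}}) = \widetilde{P}$ and $\omega_2(\widetilde{\mathfrak{B}}) = \widetilde{P_0}$, the $(a,b)$-enlarged paths. Under the post-order labeling of $\widetilde{\mathfrak{B}}$, the edges labeled $ak+1-i$ for $k\in[1,2bn]$ are exactly those sitting at the positions $\equiv a+1-i \pmod{a}$ in the post-order traversal, which in turn tracks positions in $\omega_1$ and $\omega_2$. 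Thus extracting these edges and reading $\omega_1$ (resp.\ $\omega_2$) from $\mathfrak{b}_i$ yields the $i$-th slice of $\widetilde{P}$ (resp.\ $\widetilde{P_0}$), which by Proposition \ref{prop:lbDw} is precisely $\theta_i$ (resp.\ $\theta_i^0$). Since $B$ is characterized by its pair of reading words, this forces $\mathfrak{b}_i = B(\theta_i, \theta_i^0)$.

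For (2), I would apply the same extraction analysis to $\widetilde{\mathfrak{C}}$: its reading words give the $(b,a)$-enlargements of $P^{\sharp}$ and $P_0^{\sharp}$, and the post-order extraction with index $ak+1-i$ carves out the $i$-th slice. On the other hand, by Definition \ref{defn:mock} and Proposition \ref{prop:alphgamma}, the components $\alpha^{II}_i(\pi)$ for $\pi = \zeta_b(\mathfrak{u}(P^{\sharp}))$ are obtained by exactly the analogous slicing of $\overline{P^{\sharp}}$ (reading every $a$-th letter after the canonical enlargement). So both constructions produce the same tuple of Dyck words, proving $\omega_1(\mathfrak{c}_i) = \alpha^{II}(\pi)$ and similarly for $\omega_2(\mathfrak{c}_i) = \alpha^{II}(\pi_0)$.

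For (3), the duality is then extracted from the interplay between $\sharp$ and the two slicing procedures. Dualizing $P \mapsto P^{\sharp}$ interchanges $N \leftrightarrow E$ and reverses the order, which swaps left and right edges in the associated binary tree and reverses the post-order sequence. Consequently, the $i$-th slice of the enlargement of $P$ and the $(a+1-i)$-th slice of the enlargement of $P^{\sharp}$ differ only by the $\sharp$ operation on the resulting Dyck word. Combining this symmetric shift of index with parts (1) and (2) yields $\omega_1(\mathfrak{b}_i) = \theta_i = \alpha^{II}_{a+1-i}(\pi)^{\sharp} = \omega_1(\mathfrak{c}_{a+1-i})^{\sharp}$, and analogously for $\omega_2$. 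The main obstacle I anticipate is the careful index accounting: one must verify that the reversal of the post-order induced by $\sharp$ really does send the label $ak+1-i$ to the label corresponding to $a+1-i$ in the dual tree, and that the slicing index alignment survives the enlargement-and-extract procedure on both sides. Once that combinatorial identification is made precise, (3) is immediate from (1) and (2).
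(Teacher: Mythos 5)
Your proposal follows essentially the same route as the paper's own proof: identify the subdivided trees $\widetilde{\mathfrak{B}}$, $\widetilde{\mathfrak{C}}$ with the $(a,b)$-enlarged Dyck paths so that the post-order slicing reproduces $\theta(P)$ via Proposition \ref{prop:lbDw} for (1), invoke the type-$II$ slicing results (Proposition \ref{prop:mock1} / Definition \ref{defn:mock} together with Theorems \ref{thrm:distPQ} and \ref{thrm:stripdec}) for (2), and obtain (3) from the fact that $\sharp$ swaps $N\leftrightarrow E$ (hence left and right edges) and reverses the traversal, flipping the slice index $i\mapsto a+1-i$. Your version is somewhat more explicit about the index bookkeeping between the post-order labels and the positions in $\omega_1,\omega_2$, but it is the same argument.
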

%%%%%%%%%%%%%
\begin{proof}
(1)
The alphabets $\overline{1}$, and $\overline{2}$ or $2$ in 
the words $\omega_1(\mathfrak{B})$ and $\omega_2(\mathfrak{B})$ 
correspond to the left and right edges in $\mathfrak{B}$.
The construction of $\widetilde{\mathfrak{B}}$ from $\mathfrak{B}$
is equivalent to constructing an $(a,b)$-enlarged rational Dyck path
$\widetilde{P}$ from $P$.
From Proposition \ref{prop:lbDw}, we have 
$\mathfrak{b}_{i}=B(\theta_i,\theta^{0}_{i})$.

(2)
From Theorem \ref{thrm:distPQ}, Theorem \ref{thrm:stripdec} and 
Proposition \ref{prop:mock1} (apply for $\zeta_{b}(\mathfrak{u}(P^{\sharp}))$ and 
$\zeta_{b}(\mathfrak{u}(P_{0}^{\sharp}))$), it follows that 
the words $\omega_{1}(\mathfrak{c}_{i})$ and 
$\omega_{2}(\mathfrak{c}_{i})$ are given by 
the $a$-tuple of Dyck words of type $II$ $\alpha^{II}(\pi)$
and $\alpha^{II}(\pi_0)$.

(3)
The operation $\sharp$ reverses $N$ and $E$ to $E$ and $N$ 
in a rational Dyck path and also reverses the order of words.
By construction of $\widetilde{\mathfrak{B}}$ and $\widetilde{\mathfrak{C}}$, 
it is clear that the words constructed from the binary trees $\mathfrak{b}_{i}$
are obtained from the binary trees $\mathfrak{c}_{a+1-i}$ by the 
operation $\sharp$. This completes the proof.
\end{proof}

\begin{example}
We consider the same path $P=NENENE^2NE^2$ as Figure \ref{fig:rDyck} and the lowest
path $P_0=NENE^2NENE^2$.
Figure \ref{fig:btBT} is the binary trees $\mathfrak{B}(P,P_0)$ and 
$\widetilde{\mathfrak{B}}$.
The labels on $\widetilde{\mathfrak{B}}$ are expressed modulo $2$.
\begin{figure}[ht]
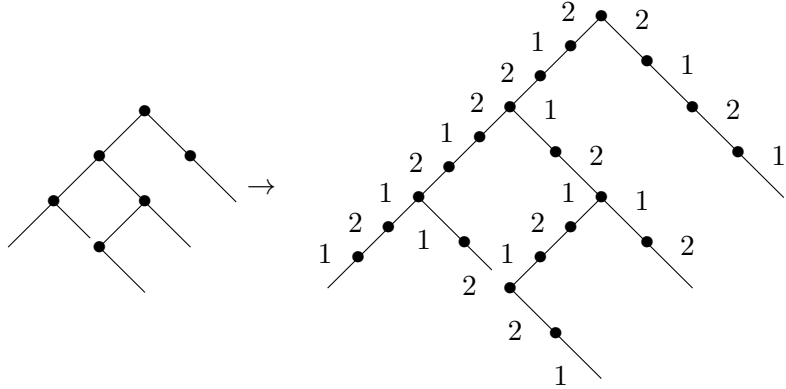

\tikzpic{-0.5}{[scale=0.6]
\draw(0,0)--(-3,-3)(0,0)--(2,-2)(-1,-1)--(1,-3)
(0,-2)--(-1,-3)--(0,-4)(-2,-2)--(-1.2,-2.8);
\draw(0,0)node{$\bullet$}(1,-1)node{$\bullet$}(-1,-1)node{$\bullet$}(-2,-2)node{$\bullet$}
(0,-2)node{$\bullet$}(-1,-3)node{$\bullet$};
}$\rightarrow$
\tikzpic{-0.5}{[scale=1.2]
\draw(0,0)--(-3,-3)(0,0)--(2,-2)(-1,-1)--(1,-3)
(0,-2)--(-1,-3)--(0,-4)(-2,-2)--(-1.2,-2.8);
\draw(0,0)node{$\bullet$}(1,-1)node{$\bullet$}(-1,-1)node{$\bullet$}(-2,-2)node{$\bullet$}
(0,-2)node{$\bullet$}(-1,-3)node{$\bullet$};
\draw(-1/3,-1/3)node{$\bullet$}(-2/3,-2/3)node{$\bullet$}(-4/3,-4/3)node{$\bullet$}
(-5/3,-5/3)node{$\bullet$}(-7/3,-7/3)node{$\bullet$}(-8/3,-8/3)node{$\bullet$}
(-1/3,-7/3)node{$\bullet$}(-2/3,-8/3)node{$\bullet$};
\draw(1/2,-1/2)node{$\bullet$}(3/2,-3/2)node{$\bullet$}
(-1/2,-3/2)node{$\bullet$}(-1/2,-7/2)node{$\bullet$}
(-3/2,-5/2)node{$\bullet$}(1/2,-5/2)node{$\bullet$};
\draw(-17/6,-17/6)node[anchor=south east]{$1$}(-15/6,-15/6)node[anchor=south east]{$2$}
(-13/6,-13/6)node[anchor=south east]{$1$}(-11/6,-11/6)node[anchor=south east]{$2$}
(-9/6,-9/6)node[anchor=south east]{$1$}(-7/6,-7/6)node[anchor=south east]{$2$}
(-5/6,-5/6)node[anchor=south east]{$2$}(-3/6,-3/6)node[anchor=south east]{$1$}
(-1/6,-1/6)node[anchor=south east]{$2$};
\draw(1/4,-1/4)node[anchor=south west]{$2$}(3/4,-3/4)node[anchor=south west]{$1$}
(5/4,-5/4)node[anchor=south west]{$2$}(7/4,-7/4)node[anchor=south west]{$1$};
\draw(-3/4,-5/4)node[anchor=south west]{$1$}(-1/4,-7/4)node[anchor=south west]{$2$}
(1/4,-9/4)node[anchor=south west]{$1$}(3/4,-11/4)node[anchor=south west]{$2$};
\draw(-7/4,-9/4)node[anchor=north east]{$1$}(-5/4,-11/4)node[anchor=north east]{$2$}
(-3/4,-13/4)node[anchor=north east]{$2$}(-1/4,-15/4)node[anchor=north east]{$1$};
\draw(-1/6,-13/6)node[anchor=south east]{$1$}(-3/6,-15/6)node[anchor=south east]{$2$}
(-5/6,-17/6)node[anchor=south east]{$1$};
}	
\caption{The left picture is the binary tree $\mathfrak{B}$ for the path $NENENE^{2}NE^{2}$. 
The right picture is $\widetilde{\mathfrak{B}}$ and its labels modulo $2$.}
\label{fig:btBT}
\end{figure}

By vertical strip-decomposition, we have two Dyck paths $\theta:=(\theta_1,\theta_2)$ and 
their canopies $\theta(P_{0})=(\theta_1^{0},\theta_2^{0})$
given by 
\begin{align*}
&\theta_{1}=N^{2}ENEN^{2}E^{2}NE^{2}, \quad \theta_{1}^{0}=N^{2}ENE^{2}N^{2}ENE^{2}, \\ 
&\theta_{2}=NEN^{2}ENE^{2}N^{2}E^{2}, \quad \theta_2^{0}=NEN^{2}E^{2}NEN^{2}E^{2}.
\end{align*}
The binary trees $\mathfrak{b}_{i}=B(\theta_i,\theta_{i}^{0})$ for $i=1,2$ are depicted 
as follows.

\begin{align*}
\mathfrak{b}_1=
\tikzpic{-0.5}{[scale=0.5]
\draw(0,0)--(-4,-4)(0,0)--(2,-2)(-1,-1)--(1,-3)(0,-2)--(-2,-4)--(-1,-5);
\draw(-2,-2)--(-1.3,-2.7);
\draw(0,0)node{$\bullet$}(1,-1)node{$\bullet$}(-1,-1)node{$\bullet$}
(-2,-2)node{$\bullet$}(-3,-3)node{$\bullet$}(0,-2)node{$\bullet$}
(-1,-3)node{$\bullet$}(-2,-4)node{$\bullet$};
},\quad
\mathfrak{b}_2=
\tikzpic{-0.5}{[scale=0.5]
\draw(0,0)--(2,-2)(0,0)--(-5,-5)(-2,-2)--(0,-4)(-1,-3)--(-2,-4)--(-1,-5)
(-4,-4)--(-3,-5);
\draw(0,0)node{$\bullet$}(1,-1)node{$\bullet$}(-1,-1)node{$\bullet$}
(-2,-2)node{$\bullet$}(-3,-3)node{$\bullet$}(-4,-4)node{$\bullet$}
(-1,-3)node{$\bullet$}(-2,-4)node{$\bullet$};
}
\end{align*}
Note that $\mathfrak{b}_{1}$ (resp. $\mathfrak{b}_{2}$) is obtained from $\widetilde{\mathfrak{B}}$ by taking 
the edges labeled by $1$ (resp. $2$). 

From the binary tree for $\mathfrak{C}=B(P^{\sharp},P^{\sharp}_{0})$, we obtain 
two binary trees:
\begin{align*}
\mathfrak{c}_{1}=
\tikzpic{-0.5}{[scale=0.5]
\draw(0,0)--(1,-1)(0,0)--(-5,-5)(-1,-1)--(1,-3)(-2,-2)--(-1,-3)--(-1.7,-3.7)
(-3,-3)--(-1,-5);
\draw(0,0)node{$\bullet$}(-1,-1)node{$\bullet$}(-2,-2)node{$\bullet$}
(-3,-3)node{$\bullet$}(-4,-4)node{$\bullet$}(-1,-3)node{$\bullet$}(-2,-4)node{$\bullet$}
(0,-2)node{$\bullet$};
},\quad
\mathfrak{c}_{2}=
\tikzpic{-0.5}{[scale=0.5]
\draw(0,0)--(2,-2)(0,0)--(-5,-5)(-1,-1)--(0,-2)(-2,-2)--(0,-4)--(-1,-5)
(-3,-3)--(-2,-4);
\draw(0,0)node{$\bullet$}(1,-1)node{$\bullet$}(-1,-1)node{$\bullet$}
(-2,-2)node{$\bullet$}(-1,-3)node{$\bullet$}
(-3,-3)node{$\bullet$}(-4,-4)node{$\bullet$};
}.
\end{align*}
From $\mathfrak{c}_1$, we have two Dyck paths 
\begin{align*}
a_{1}^{1}=N^2E^2N^2ENE^2NE, \quad
a_{2}^{1}=N^2E^2NEN^2E^2NE, 
\end{align*}
Similarly, from $\mathfrak{c}_2$, we have two Dyck paths
\begin{align*}
a_{1}^{2}=N^2EN^2E^2NENE^2, \quad
a_{2}^{2}=N^2ENE^2N^2ENE^2.
\end{align*}

The step sequence for $P^{\sharp}$ (resp. $P_0^{\sharp}$) 
is $(0,0,1,1,2,3)$ (resp. $(0,0,1,2,2,3)$).
The parenthesis presentations are 
\begin{align*}
\alpha^{\ast}(\pi)=((**)*((**)*(**)*(**))*), \quad 
\alpha^{\ast}(\pi_{0})=((**)*(**)*((**)*(**)*)).
\end{align*}
From these parenthesis presentations, we have 
\begin{align*}
\alpha^{II}(\pi)=(a_{1}^{1},a_{1}^{2}), \quad
\alpha^{II}(\pi_{0})=(a_{2}^{1},a_{2}^{2}).
\end{align*}
We have the duality among four Dyck words:
\begin{align*}
\theta_1=(a_{1}^{2})^{\sharp}, \quad
\theta_2=(a_{1}^{1})^{\sharp}, \quad
\theta_1^{0}=(a_{2}^{2})^{\sharp}, \quad
\theta_2^{0}=(a_{2}^{1})^{\sharp}.
\end{align*}
\end{example}

\bibliographystyle{amsplainhyper} 
\bibliography{biblio}

\end{document}